\documentclass[11pt,english]{amsart}
\usepackage[T1]{fontenc}
\usepackage[latin9]{inputenc}
\usepackage{geometry}
\geometry{verbose,tmargin=3cm,bmargin=3cm,lmargin=3cm,rmargin=3cm,headheight=1cm,headsep=1cm,footskip=2cm}
\pagestyle{plain}
\usepackage{amstext}
\usepackage{amsthm}
\usepackage{amssymb}
\usepackage{graphicx}
\usepackage{setspace}
\onehalfspacing

\makeatletter
\numberwithin{equation}{section}
\numberwithin{figure}{section}
\theoremstyle{plain}
\newtheorem{thm}{\protect\theoremname}[section]
\theoremstyle{plain}
\newtheorem{cor}[thm]{\protect\corollaryname}
\theoremstyle{plain}
\newtheorem{prop}[thm]{\protect\propositionname}
\theoremstyle{remark}
\newtheorem{rem}[thm]{\protect\remarkname}
\theoremstyle{plain}
\newtheorem{fact}[thm]{\protect\factname}
\theoremstyle{plain}
\newtheorem{lem}[thm]{\protect\lemmaname}

\date{}
\usepackage{mathrsfs}
\pdfpageattr{/Group << /S /Transparency /I true /CS /DeviceRGB>>}
\usepackage{ae,aecompl} 
\usepackage{enumerate}
\usepackage{bbm}
\usepackage{hyperref}

%
%
%
%
%

\let\originalleft\left
\let\originalright\right
\renewcommand{\left}{\mathopen{}\mathclose\bgroup\originalleft}
\renewcommand{\right}{\aftergroup\egroup\originalright}

\hypersetup{
                    colorlinks=true,
                    linkcolor=red,
                    citecolor=blue,
                    linktocpage=true,
                    pdfstartview=FitH,
                }

\makeatother

\usepackage{babel}
\providecommand{\corollaryname}{Corollary}
\providecommand{\factname}{Fact}
\providecommand{\lemmaname}{Lemma}
\providecommand{\propositionname}{Proposition}
\providecommand{\remarkname}{Remark}
\providecommand{\theoremname}{Theorem}

\begin{document}
\global\long\def\d#1{\,{\rm d}#1}

\global\long\def\R{\mathbb{R}}

\global\long\def\C{\mathbb{C}}

\global\long\def\Z{\mathbb{Z}}

\global\long\def\N{\mathbb{N}}

\global\long\def\Q{\mathbb{Q}}

\global\long\def\T{\mathbb{T}}

\global\long\def\F{\mathbb{F}}

\global\long\def\vp{\varphi}

\global\long\def\Sph{\mathbb{S}}

\global\long\def\sub{\subseteq}

\global\long\def\one{\mathbbm1}

\global\long\def\vol#1{\text{vol}\left(#1\right)}

\global\long\def\EE{\mathbb{E}}

\global\long\def\sp{{\rm sp}}

\global\long\def\iprod#1#2{\langle#1,\,#2\rangle}

\global\long\def\uball{B_{2}^{n}}

\global\long\def\conv#1{{\rm conv}\left(#1\right)}

\global\long\def\met#1{{\rm M}\left(#1\right)}

\global\long\def\supp#1{{\rm supp}\left(#1\right)}

\global\long\def\eps{\varepsilon}

\global\long\def\PP{\mathbb{P}}

\global\long\def\vein#1{{\rm vein}\left(#1\right)}

\global\long\def\fvein#1{{\rm vein^{*}}\left(#1\right)}

\global\long\def\bfvein#1{{\rm \mathbf{vein^{*}}}\left(\mathbf{#1}\right)}

\global\long\def\ill#1{{\rm ill\left(#1\right)}}

\global\long\def\fill#1{{\rm ill^{*}\left(#1\right)}}

\global\long\def\ovr#1{{\rm ovr\left(#1\right)}}

\global\long\def\norm#1{\left\Vert #1\right\Vert }

\global\long\def\proj{{\rm Pr}}

\global\long\def\d#1{\,{\rm d}#1}

\global\long\def\R{\mathbb{R}}

\global\long\def\C{\mathbb{C}}

\global\long\def\Z{\mathbb{Z}}

\global\long\def\N{\mathbb{N}}

\global\long\def\Q{\mathbb{Q}}

\global\long\def\T{\mathbb{T}}

\global\long\def\F{\mathbb{F}}

\global\long\def\vp{\varphi}

\global\long\def\Sph{\mathbb{S}}

\global\long\def\sub{\subseteq}

\global\long\def\one{\mathbbm1}

\global\long\def\vol#1{\text{vol}\left(#1\right)}

\global\long\def\EE{\mathbb{E}}

\global\long\def\sp{{\rm sp}}

\global\long\def\iprod#1#2{\langle#1,\,#2\rangle}

\global\long\def\uball{B_{2}^{n}}

\global\long\def\conv#1{{\rm conv}\left(#1\right)}

\global\long\def\met#1{{\rm M}\left(#1\right)}

\global\long\def\mfloat#1#2{{\rm M}_{#1}\left(#2\right)}

\global\long\def\mwfloat#1#2#3{{\rm M}_{#1}\left(#2,#3\right)}

\global\long\def\supp#1{{\rm supp}\left(#1\right)}

\global\long\def\eps{\varepsilon}

\global\long\def\PP{\mathbb{P}}

\global\long\def\vein#1{{\rm vein}\left(#1\right)}

\global\long\def\fvein#1{{\rm vein^{*}}\left(#1\right)}

\global\long\def\bfvein#1{{\rm \mathbf{vein^{*}}}\left(\mathbf{#1}\right)}

\global\long\def\ill#1{{\rm ill\left(#1\right)}}

\global\long\def\fill#1{{\rm ill^{*}\left(#1\right)}}

\global\long\def\ovr#1{{\rm ovr\left(#1\right)}}

\global\long\def\norm#1{\left\Vert #1\right\Vert }

\global\long\def\proj{{\rm Pr}}

\global\long\def\ra{\Rightarrow}

\global\long\def\H#1#2{H^{+}\left(#1,\,#2\right)}

\title{Ulam floating bodies}

\author{Han Huang}

\address{Department of Mathematics, University of Michigan, Ann Arbor, Michigan }

\email{sthhan@umich.edu}

\author{Boaz A. Slomka}

\address{Department of Mathematics, University of Michigan, Ann Arbor, Michigan }

\email{bslomka@umich.edu}

\author{Elisabeth M. Werner}

\address{Department of Mathematics, Case Western Reserve University, Cleveland,
Ohio }

\email{\noindent {\small{}elisabeth.werner@case.edu}}

\thanks{The third author is partially supported by NSF grant DMS-1504701}
\begin{abstract}
We study a new construction of bodies from a given convex body in
$\R^{n}$ which are isomorphic to (weighted) floating bodies. We establish
several properties of this new construction, including its relation
to $p$-affine surface areas. We show that these bodies are related
to Ulam\textquoteright s long-standing floating body problem which
asks whether Euclidean balls are the only bodies that can float, without
turning, in any orientation.
\end{abstract}

\subjclass[2010]{52A20, 52A38.}

\keywords{Convex body, Floating body, Affine surface area}
\maketitle

\section{Introduction}

\subsection{Metronoids}

Let $K$ be a convex body in $\R^{n}$ (i.e. a compact convex set
with non-empty interior), and denote its Lebesgue volume by $\left|K\right|$.
The purpose of this paper is to study a new family of convex bodies
$\mfloat{\delta}K$ associated to $K$, where $0<\delta<\left|K\right|$
is a parameter.

The construction of this family arises from the notion of metronoids
which was recently introduced in \cite{Huang2017} in order to study
extensions of problems concerning the approximation of convex bodies
by polytopes. Given a Borel measure $\mu$ on $\R^{n}$, the {\em
metronoid} associated to $\mu$ is the convex set defined by 
\[
\met{\mu}=\bigcup_{\substack{0\le f\le1,\\
\int_{\R^{n}}f\d{\mu}=1
}
}\left\{ \int_{\R^{n}}yf\left(y\right)\d{\mu}\left(y\right)\right\} ,
\]
where the union is taken over all functions $0\le f\le1$ for which
$\int_{\R^{n}}f\d{\mu=1}$ and $\int_{\R^{n}}yf\left(y\right)\d{\mu}\left(y\right)$
exists. Note that for a discrete measure of the form $\sum_{i=1}^{N}\delta_{x_{i}}$,
the corresponding metronoid is the convex hull of $x_{1},\dots,x_{N}$.
Hence $\met{\mu}$ can be thought of as a fractional extension of
the convex hull.

\subsection{Ulam's floating body}

Our main object $\mfloat{\delta}K$ is the metronoid generated by
the uniform measure on $K$ with total mass $\delta^{-1}\left|K\right|$.
Namely, let $\one_{K}$ be the characteristic function of $K$, and
$\mu$ the measure whose density with respect to Lebesgue measure
is $\delta^{-1}\one_{K}$. Then $\mfloat{\delta}K:=\met{\mu}$. It
turns out that $M_{\delta}\left(K\right)$ is intimately related to
the following long-standing problem proposed by Ulam, see e.g., \cite{Auerbach1938,ScotBook81,CFG91,Gardner:2006}:
Is a solid of uniform density which floats in water in every position
a Euclidean ball? While counterexamples were found in $\R^{2}$ (convex
and non-convex) and $\R^{3}$ (only non-convex), this problem remains
open  in arbitrary dimensions. For a full account of the progress
made on this problem, see \cite{Varkonyi13} and references therein. 

As we show in Section \ref{sec:Ulam_problem} below, along with a
precise description of Ulam's problem, one can restate Ulam's problem
in terms of $M_{\delta}\left(K\right)$ as follows: If $M_{\delta}\left(K\right)$
is a Euclidean ball, must $K$ be a Euclidean ball as well? For that
reason, we call $M_{\delta}\left(K\right)$ an {\em Ulam floating
body}. As far as we know, this construction and its relation to Ulam's
problem is not mentioned anywhere in the literature. 

We also define weighted variations of $\mfloat{\delta}K$ where the
weight is given by a positive continuous function $\phi:K\to\R$.
Namely, we define 
\[
\mwfloat{\delta}K{\phi}:=\met{\frac{\phi\left(x\right)}{\delta}\one_{K}\left(x\right)\d x}.
\]

To understand $\mfloat{\delta}K$ geometrically, recall that a convex
body $K\sub\R^{n}$ is determined by its support function $h_{K}\left(\theta\right)=\max_{x\in K}\iprod x{\theta}$,
where $\iprod{\cdot}{\cdot}$ is the standard scalar product on $\R^{n}$.
For every direction $\theta\in\Sph^{n-1}$, let $H\left(\delta,\theta\right)$
be the hyperplane orthogonal to $\theta$ that cuts a set of volume
$\delta$ from $K$. That is 
\[
C_{\delta}\left(\theta\right)=K\cap\left\{ x\,:\,\iprod x{\theta}\ge\iprod{y_{\theta}}{\theta}\right\} 
\]
has volume $\delta$ for any $y_{\theta}\in H\left(\delta,\theta\right)$.
Then the barycenter of $C_{\delta}\left(\theta\right)$ is a point
on the boundary of $\mfloat{\delta}K$. More precisely, by \cite[Proposition 2.1]{Huang2017},
we have that for any direction $\theta$, 
\[
h_{\mfloat{\delta}K}\left(\theta\right)=\frac{1}{\delta}\int_{C_{\delta}\left(\theta\right)}\iprod x{\theta}\d x.
\]

As illustrated in Figure \ref{fig:Floating_Metro}, the body $\mfloat{\delta}K$
is closely related to the convex floating body $K_{\delta}$, introduced
independently in \cite{Barany_Larman:1988} and \cite{Schuett:1990}.
Using the above notation, we have that 
\[
K_{\delta}=\bigcap_{\theta\in\Sph^{n-1}}\left\{ x\,:\,\iprod x{\theta}\le\iprod{y_{\theta}}{\theta}\right\} ,
\]
which is a non-empty convex set for a sufficiently small $0<\delta$.
In fact, $\mfloat{\delta}K$ is isomorphic to $K_{\delta}$ in the
sense that $K_{\frac{e-1}{e}\delta}\sub\mfloat{\delta}K\sub K_{\frac{1}{e}}$.
We discuss this property in the more general case of weighted Ulam
floating bodies in Section \ref{sec:F_M} below (also see Theorem
\ref{thm:floating_metro_iso}).\bigskip{}

\begin{figure}[h]
\begin{centering}
\includegraphics[scale=1.1]{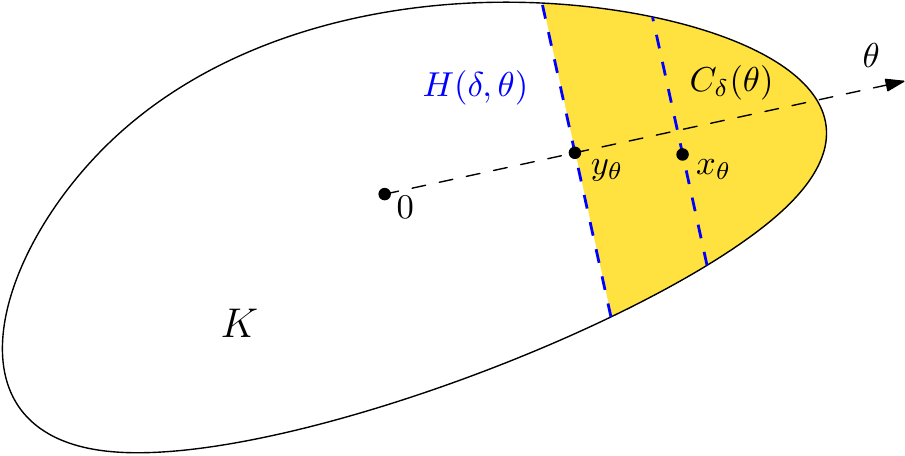} 
\par\end{centering}
\caption{\label{fig:Floating_Metro}$H\left(\delta,\theta\right)$ is the hyperplane
orthogonal to $\theta$ that cuts a set $C_{\delta}\left(\theta\right)$
of volume $\delta$ from a convex body $K$: $\left|C_{\delta}\left(\theta\right)\right|=\left|K\cap\left\{ x\,:\,\protect\iprod x{\theta}\ge\protect\iprod{y_{\theta}}{\theta}\right\} \right|=\delta$.
The point $x_{\theta}$ is the barycenter of $C_{\delta}\left(\theta\right)$.
Then 
\[
K_{\delta}\protect\sub K\cap\left\{ x\,:\,\protect\iprod x{\theta}\le\protect\iprod{y_{\theta}}{\theta}\right\} 
\]
while 
\[
\protect\mfloat{\delta}K\protect\sub K\cap\left\{ x\,:\,\protect\iprod x{\theta}\le\protect\iprod{x_{\theta}}{\theta}\right\} .
\]
}
\end{figure}

The convex floating body is a natural variation of Dupin's floating
body \cite{Dupin:1822} from 1822. Dupin's floating body $K_{\left[\delta\right]}$
is defined as the body whose boundary is the set of points that are
the barycenters of all the sections of $K$ of the form $K\cap H\left(\delta,\theta\right)$,
where $H\left(\delta,\theta\right)$ are the aforementioned hyperplanes
that cut a set of volume $\delta$ from $K$. However, while $K_{\delta}$
coincides with $K_{\left[\delta\right]}$ whenever $K_{\left[\delta\right]}$
is convex (e.g., for centrally-symmetric $K$, see \cite{Meyer-Reisner1991}),
in the non-centrally symmetric case, Dupin's floating body need not
be convex, as in the case of some triangles in $\R^{2}$ (see e.g.,
\cite{Leichtweiss:1998}). Restating the above, every point on the
boundary of $K_{\delta}$ is the barycenter of $K\cap H\left(\delta,\theta\right)$
for some $\theta$, but the converse holds only if Dupin's floating
body is convex.\medskip{}

Note that our construction $M_{\delta}\left(K\right)$ corresponds
nicely to both definitions, that of the floating body and that of
the convex floating body in the sense that it enjoys being convex
as well as having the property that a point is on the boundary of
$\mfloat{\delta}K$ if and only if it is the barycenter of a set of
volume $\delta$ that is cut off by a hyperplane. 

\subsection{Main results}

We present three main theorems concerning Ulam's floating bodies.
While the first result establishes an explicit relation between (weighted)
floating bodies and (weighted) Ulam's floating bodies, the other two
results are the analogous counterparts to the classical floating bodies.

\subsubsection{\textbf{Relation to floating bodies}}

Our first theorem shows that (weighted) Ulam's floating bodies are
isomorphic, in a sense, to (weighted) floating bodies. Weighted floating
bodies were introduced in \cite{Werner2002} (also see \cite{Besau2016,Besau_Werner:2018}
for recent applications) as follows. Let $K\sub\R^{n}$ be a convex
body, $0<\delta$, and $\phi:K\to\R$ be integrable and such that
$\phi>0$ almost everywhere with respect to Lebesgue measure. For
a hyperplane $H$ in $\R^{n}$, let $H^{\pm}$ be the half-spaces
separated by $H$. Then the weighted floating body $F_{\delta}\left(K,\phi\right)$
is defined as 
\begin{align*}
F_{\delta}\left(K,\phi\right)=\bigcap\left\{ H^{-}:\int_{H^{+}\cap K}\phi\left(x\right)\d x\leq\delta\right\} .
\end{align*}
Note that for $\phi\equiv1$, we have that $F_{\delta}\left(K,\phi\right)=K_{\delta}.$ 

We prove the following. 
\begin{thm}
\label{thm:floating_metro_iso}Let $K$ be a convex body in $\R^{n}$,
and let $\phi:K\to\R^{+}$ be an integrable log-concave function.
Then for all $0<\delta<\left|K\right|$, we have 
\[
F_{\frac{e-1}{e}\delta}\left(K,\phi\right)\sub\mwfloat{\delta}K{\phi}\sub F_{\frac{\delta}{e}}\left(K,\phi\right).
\]
In particular, for $\phi\equiv1$ we have that 
\[
K_{\frac{e-1}{e}\delta}\sub\mwfloat{\delta}K{\phi}\sub K_{\frac{\delta}{e}}.
\]
\end{thm}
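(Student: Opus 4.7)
The plan is to reduce both inclusions to a sharp one-dimensional Grünbaum-type inequality for log-concave densities. Since all bodies involved are convex, it suffices to compare support functions in each direction $\theta \in \Sph^{n-1}$. A weighted analogue of \cite[Proposition 2.1]{Huang2017}, proved by the bang-bang principle applied to the measure defining $\mwfloat{\delta}K{\phi}$ (the one with density $\tfrac{1}{\delta}\phi\one_{K}$), gives
\[
h_{\mwfloat{\delta}K{\phi}}(\theta) = \frac{1}{\delta}\int_{C_{\delta}^{\phi}(\theta)}\iprod{y}{\theta}\phi(y)\d y,
\]
where $C_{\delta}^{\phi}(\theta) = K \cap \{y : \iprod{y}{\theta} \ge s_{0}\}$ is the unique cap of $\phi$-mass exactly $\delta$; in other words, $h_{\mwfloat{\delta}K{\phi}}(\theta)$ is the $\theta$-component of the $\phi$-weighted barycenter of that cap. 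Directly from its definition, $h_{F_{t}(K,\phi)}(\theta)$ is the level $s$ at which the cap $K \cap \{\iprod{\cdot}{\theta} \ge s\}$ has $\phi$-mass $t$.

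Next, I would pass to the one-dimensional marginal $g_{\theta}(t) := \int_{K \cap \{\iprod{\cdot}{\theta} = t\}} \phi \d\mathcal{H}^{n-1}$. Since $\phi\one_{K}$ is log-concave on $\R^{n}$, Prékopa--Leindler ensures that $g_{\theta}$ is log-concave on its support. Setting $G_{\theta}(s) := \int_{s}^{\infty}g_{\theta}$, $s_{0} := G_{\theta}^{-1}(\delta)$, $s_{1} := G_{\theta}^{-1}(\delta/e)$, $s_{2} := G_{\theta}^{-1}((e-1)\delta/e)$, and
\[
\tilde b := h_{\mwfloat{\delta}K{\phi}}(\theta) = s_{0} + \frac{1}{\delta}\int_{s_{0}}^{\infty}G_{\theta}(s)\d s,
\]
the two desired inclusions collapse to the single pair of inequalities $s_{2} \le \tilde b \le s_{1}$. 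Letting $Y$ denote the random variable on $[s_{0},\infty)$ with log-concave density $g_{\theta}/\delta$ (log-concavity being preserved under restriction to a half-line), we have $\tilde b = \EE Y$, while $s_{1}$ and $s_{2}$ are the upper and lower $1/e$-quantiles of $Y$, so the two inclusions amount to the single pair $\PP(Y \ge \EE Y) \ge 1/e$ and $\PP(Y \le \EE Y) \ge 1/e$.

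This final pair of estimates is the classical \emph{one-dimensional Grünbaum inequality for log-concave measures on $\R$}, which I expect to be the main technical obstacle of the proof; the reductions above are essentially bookkeeping. Both bounds are sharp, approached in the limit by one-sided exponential distributions shifted so that the mean coincides with the $1/e$-quantile---which is precisely where the constants $1/e$ and $\tfrac{e-1}{e}$ in the theorem come from. A proof can be given either by Lovász--Simonovits localization---reducing the extremal problem over log-concave densities with prescribed mean to the two-parameter family of truncated exponentials $g(t) = e^{at+b}\one_{[c,d]}$, where an explicit optimization shows that the infimum is attained as $d \to \infty$ at a half-line exponential giving the sharp value $1/e$---or directly, by working with the convex function $-\log G_{\theta}$, its tangent line at $s_{0}$, and the integration-by-parts identity expressing $\tilde b - s_{0}$ as $\tfrac{1}{\delta}\int_{s_{0}}^{\infty}G_{\theta}$, then comparing with the exponential equality case.
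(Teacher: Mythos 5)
Your proof is correct and rests on exactly the same pillars as the paper's: both reduce to one-dimensional marginals $g_\theta$ of $\phi\one_K$ (log-concave by Pr\'ekopa--Leindler), both identify $h_{\mwfloat{\delta}K{\phi}}(\theta)$ with the mean of $g_\theta$ restricted to the cap $[s_0,\infty)$, and both appeal to the sharp Gr\"unbaum-type inequality for one-dimensional log-concave densities --- this is precisely \cite[Lemma 5.4]{Lovasz:2007}, which the paper cites, so you have correctly located the technical heart of the argument. The one place where your argument is genuinely leaner is the inclusion $\mwfloat{\delta}K{\phi}\subseteq F_{\delta/e}(K,\phi)$: the paper applies Gr\"unbaum to the marginal of the cap $K\cap H^+(\theta,\delta)$ in an \emph{arbitrary} second direction $\beta$, shows that the barycenter $x(\theta,\delta)$ therefore lies in $F_{\delta/e}$, and then invokes convexity. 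You instead apply Gr\"unbaum only with $\beta=\theta$, obtaining $h_{\mwfloat{\delta}K{\phi}}(\theta)\le d(\theta,\delta/e)$, and observe that this holding for every $\theta$ is precisely the defining condition for membership in the intersection $F_{\delta/e}=\bigcap_\theta\{y:\iprod{y}{\theta}\le d(\theta,\delta/e)\}$; the two-direction $(\theta,\beta)$ bookkeeping is avoided entirely. One small imprecision to note: you assert that $h_{F_t(K,\phi)}(\theta)$ \emph{equals} the level $s=d(\theta,t)$ at which the $\theta$-cap has $\phi$-mass $t$. In general one only has $h_{F_t}(\theta)\le d(\theta,t)$, since $F_t$ is cut down by hyperplanes in all other directions too. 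Fortunately your reduction survives this: the first inclusion only needs the \emph{upper} bound $h_{F_{(e-1)\delta/e}}(\theta)\le s_2\le\tilde b$, and the second inclusion is argued via the half-space intersection rather than via a support-function equality for $F_{\delta/e}$ --- but the sentence as written should state an inequality, not an equality.
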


\noindent We remark that for $\phi\equiv1$, Theorem \ref{thm:floating_metro_iso}
was proven in \cite{Huang2017}.

\subsubsection{\textbf{Smoothness of Ulam's floating bodies}}

Our second main result states that the boundary $\partial\mfloat{\delta}K$
of an Ulam floating body $\mfloat{\delta}K$ is always smoother than
the boundary of $K$.
\begin{thm}
\label{thm:Smoothness } Let $K\sub\R^{n}$ be a convex body, Suppose
that $\partial K\in C^{k}$ for some $k\ge0$. Then for any $0<\delta<\left|K\right|$,
we have that $\partial\mfloat{\delta}K\in C^{k+1}$.
\end{thm}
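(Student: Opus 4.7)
The plan is to study the smoothness of the support function $h:=h_{\mfloat{\delta}K}$ and then transfer it to $\partial\mfloat{\delta}K$.  From the formula $h(\theta)=\delta^{-1}\int_{C_{\delta}(\theta)}\iprod x\theta\d x$ together with the definition of the Ulam floating body, the boundary point of $\mfloat{\delta}K$ with outer normal $\theta$ is precisely the cap's barycenter $b_{\delta}(\theta):=\delta^{-1}\int_{C_{\delta}(\theta)}x\d x$, so $\nabla h(\theta)=b_{\delta}(\theta)$.  It therefore suffices to show that $b_{\delta}\in C^{k}\bigl(\Sph^{n-1}\bigr)$: this yields $h\in C^{k+1}$ on $\R^{n}\setminus\{0\}$ and accounts for the gain of one degree of regularity.

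To obtain $b_{\delta}\in C^{k}$, I would first control the cut-off level $t(\theta)\in\R$ that is defined implicitly by $V(\theta,t(\theta))=\delta$, where $V(\theta,t):=|K\cap\{\iprod x\theta\ge t\}|$.  Since $\partial_{t}V=-A_{\theta}(t)<0$, with $A_{\theta}(t)$ the area of the section $K\cap\{\iprod x\theta=t\}$, the implicit function theorem reduces $t\in C^{k}$ to the joint $C^{k}$-regularity of $V$ in $(\theta,t)$.  For this I would construct, in a neighborhood of any fixed $\theta_{0}$, a $C^{k}$-family of diffeomorphisms $\Phi_{\theta}:C_{\delta}(\theta_{0})\to C_{\delta}(\theta)$ by flowing along a $\theta$-dependent vector field that preserves $\partial K$ and carries $H(\delta,\theta_{0})$ to $H(\delta,\theta)$; the construction is possible because $\partial K\in C^{k}$ and intersects each $H(\delta,\theta)$ transversally.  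A change of variables via $\Phi_{\theta}$ then expresses both $V(\theta,t)$ and $b_{\delta}(\theta)$ as integrals of smooth functions over the fixed domain $C_{\delta}(\theta_{0})$ with $C^{k}$-dependence on $\theta$, giving $V\in C^{k}$, hence $t\in C^{k}$, and finally $b_{\delta}\in C^{k}$.

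It remains to upgrade $h\in C^{k+1}$ to $\partial\mfloat{\delta}K\in C^{k+1}$, for which I would verify the non-degeneracy of the reduced Hessian of $h$.  Differentiating $\delta b_{\delta}(\theta)=\int_{C_{\delta}(\theta)}x\d x$ via Reynolds' transport theorem produces only a hyperplane contribution, and the cancellation arising from $\nabla t(\theta)=b_{H}(\theta)$, the centroid of $K\cap H(\delta,\theta)$, yields
\[
\delta\,Db_{\delta}(\theta)[v]=\int_{K\cap H(\delta,\theta)}\bigl(x-b_{H}(\theta)\bigr)\iprod{x-b_{H}(\theta)}v\d S(x),\qquad v\in\theta^{\perp}.
\]
This is the second-moment tensor of a non-degenerate $(n-1)$-dimensional convex section, hence strictly positive definite on $\theta^{\perp}$.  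Together with $h\in C^{k+1}$, the standard correspondence between support-function regularity and boundary regularity of a convex body then yields $\partial\mfloat{\delta}K\in C^{k+1}$.

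The main technical obstacle is the $C^{k}$-regular construction of the diffeomorphism $\Phi_{\theta}$: the two kinds of boundary pieces of $C_{\delta}(\theta_{0})$, namely the portion of $\partial K$ and the flat piece $K\cap H(\delta,\theta_{0})$, must be deformed consistently, with extra care needed along the codimension-two edge $\partial K\cap H(\delta,\theta_{0})$ where they meet.  The base case $k=0$ is, however, handled directly by the Hausdorff continuity of $\theta\mapsto C_{\delta}(\theta)$, which immediately gives $b_{\delta}\in C^{0}$ and hence $\partial\mfloat{\delta}K\in C^{1}$.
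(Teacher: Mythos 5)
Your approach---working with the support function and the identity $\nabla h_{\mfloat{\delta}K}=b_{\delta}$---is genuinely different from the paper's, which works with the radial function of $\mfloat{\delta}K$ and Fact \ref{fact:smooth} (boundary regularity equals radial-function regularity). Your Reynolds-type formula $\delta\,Db_{\delta}(\theta)[v]=\int_{K\cap H}(x-b_{H})\iprod{x-b_{H}}{v}\,\d S$ is correct, and it supplies exactly the non-degeneracy (of the inverse Gauss map of $\mfloat{\delta}K$) that the paper uses implicitly when it inverts $\theta\mapsto x_{K}(\theta,\delta)/\|x_{K}(\theta,\delta)\|$ but does not verify; that is a genuine contribution.

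The gap is in the plan to get $b_{\delta}\in C^{k}$ via a change of variables $\Phi_{\theta}:C_{\delta}(\theta_{0})\to C_{\delta}(\theta)$. Even if $\Phi_{\theta}$ were jointly $C^{k}$, pulling back $V(\theta)=|C_{\delta}(\theta)|$ introduces the Jacobian $\det D_{y}\Phi_{\theta}$, which is only $C^{k-1}$, so differentiating under the integral sign gives $V\in C^{k-1}$, not $C^{k}$. Worse, a vector field that keeps $\partial K$ invariant is generically only $C^{k-1}$ when $\partial K\in C^{k}$ (the tangent distribution of $\partial K$ is one degree less smooth than $\partial K$ itself), so the flow $\Phi_{\theta}$ is itself at best $C^{k-1}$; the codimension-two corner $\partial K\cap H$ adds a further obstruction. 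Likewise the $k=0$ fallback does not close: $b_{\delta}\in C^{0}$ gives $h\in C^{1}$, i.e.\ strict convexity of $\mfloat{\delta}K$, which is not the same statement as $\partial\mfloat{\delta}K$ being a $C^{1}$ hypersurface. The fix---and this is the key device in the paper's proof---is to differentiate \emph{before} arguing regularity: $\partial_{t}V$ and $\nabla_{\theta}V[v]=\int_{K\cap H}\iprod{x}{v}\,\d S$ are integrals over the $(n-1)$-dimensional section $K\cap H(\theta,t)$, which has no corner, and these can be written in polar coordinates from a smoothly moving center $v(\theta,t)$ in the section in terms of the radial function of $K$, which is $C^{k}$ by Fact \ref{fact:smooth}; there is no derivative loss because the radial change of variables has an explicit polynomial Jacobian $r^{n-2}$. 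This gives $\nabla V\in C^{k}$, hence $V\in C^{k+1}$, hence by the implicit function theorem $t(\theta)\in C^{k+1}$; and your own $Db_{\delta}$ expression is itself a section integral and is treated identically, giving $Db_{\delta}\in C^{k}$, $b_{\delta}\in C^{k+1}$, $h\in C^{k+2}$, and the theorem. In short, you already wrote down the right formulas---use them to prove the regularity claim rather than detouring through $\Phi_{\theta}$.
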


We remark that in the case of the convex floating body, an analogous
result to Theorem \ref{thm:Smoothness } is known only in the centrally-symmetric
case \cite{Meyer-Reisner1991}. The main reason for this is 
that the proof in \cite{Meyer-Reisner1991} relies on the above mentioned
fact that in the centrally-symmetric case the convex floating convex
body and Dupin's floating body coincide.

\subsubsection{\textbf{Affine Surface Area}}

The affine surface area was introduced by W.\ Blaschke \cite{Blaschke:1923}
in 1923 for smooth convex bodies in Euclidean space of dimensions
2 and 3, and extended to $\R^{n}$ by K.\ Leichtweiss \cite{Leichtweiss:1986}.
Given a convex body $K\sub\R^{n}$ with a sufficiently smooth boundary,
let $\kappa_{K}\left(x\right)$ be the Gaussian curvature at $x\in\partial K$,
and $\mu_{K}$ the surface area measure on $\partial K$. The affine
surface area of $K$ is defined by 
\[
as\left(K\right)=\int_{\partial K}\kappa_{K}\left(x\right)^{\frac{1}{n+1}}\d{\mu_{K}}.
\]
Even though it proved to be much more difficult to extend the notion
of affine surface area to general convex bodies than other notions,
like surface area measures or curvature measures, successively such
extensions were achieved, by e.g., K.\ Leichtweiss \cite{Leichtweiss:1986},
E.\ Lutwak \cite{Lutwak:1991}, who also proved the long conjectured
upper semicontinuity of affine surface area \cite{Lutwak:1991} and
by C.\ Schütt and E.\ Werner \cite{Schuett:1990} who showed that
the affine surface area arises as a limit of the volume difference
of the convex body and its floating body. All these extensions coincide
as was shown in \cite{Schuett:1993,Leichtweiss89}.

Affine surface area is among the most powerful tools in equiaffine
differential geometry (see B.\
Andrews \cite{Andrews:1996,Andrews:1999}, A.\ Stancu \cite{Stancu:2002,Stancu:2003},
M.\ Ivaki \cite{Ivaki:2014}, M.\ Ivaki and A.\ Stancu \cite{Ivaki:2013a}
and M. Ludwig and M. Reitzner \cite{Ludwig:2010}). It appears naturally
as the Riemannian volume of a smooth convex hypersurface with respect
to the affine metric (or Berwald-Blaschke metric), see e.g., the thorough
monograph of K.\ Leichtweiss \cite{Leichtweiss:1998} or the book
by K.\ Nomizu and T.\ Sasaki \cite{Nomizu:1994}. In particular
the upper semicontinuity proved to be critical in the solution of
the affine Plateau problem by N.\ S.\ Trudinger and X. J. Wang \cite{Trudinger:2005}.

Applications of affine surface areas have been manifold. For instance,
affine surface area appears in best and random approximation of convex
bodies by polytopes, see K.\ Böröczky Jr.\ \cite{Boeroeczky:2000,Boeroeczky:2000a},
P.\ M.\ Gruber \cite{Gruber:1988,Gruber:1993,Gruber:2007}, M.\ Ludwig
\cite{Ludwig:1999a}, M.\ Reitzner \cite{Reitzner:2005}, C.\
Schütt \cite{Schuett:1991,Schutt:1994} and J. Grote and E. Werner,
\cite{Grote_Werner:2018} and C.\ Schütt and E.\ Werner \cite{Schuett:2003}.
Furthermore, recent contributions indicate astonishing developments
which open up new connections of affine surface area to, e.g., concentration
of volume (e.g.\ \cite{Fleury:2007,Lutwak:2002a}), spherical and
hyperbolic spaces \cite{Besau_Werner:2016,Besau_Werner:2018}, geometric
inequalities \cite{Lutwak:1997,Werner:2008} and information theory
(e.g.\ \cite{Artstein-Avidan:2012,Caglar:2014,Lutwak:2004b,Lutwak:2005a,Werner:2012,Paouris_Werner:2012}).

The $\mathrm{L}_{p}$-affine surface area is a generalization of the
classical affine surface area and a central part in the $\mathrm{L}_{p}$-Brunn-Minkowski
theory. It was introduced by \ E.\ Lutwak \cite{Lutwak:1996} for
$p>1$ (see also D.\ Hug \cite{Hug:1996} and M.\ Meyer and E.\ Werner
\cite{Meyer:2000}) and extended for all $p\in\left[-\infty,\infty\right]$
in \cite{Schuett:2004}. For $-\infty<p<\infty$ , the $\mathrm{L}_{p}$-affine
surface area of a convex body $K\sub\R^{n}$ is given by 
\begin{equation}
as_{p}(K)=\int_{\partial K}\frac{\kappa_{K}(x)^{\frac{p}{n+p}}}{\langle x,N_{K}(x)\rangle^{\frac{n(p-1)}{n+p}}}d\mu_{K}(x),\label{pasa1-1}
\end{equation}
where $N_{K}\left(x\right)$ is the outer normal of $K$ at $x$.
For $p=\pm\infty$, it is given by 
\begin{equation}
as_{\pm\infty}(K)=\int_{\partial K}\frac{\kappa_{K}(x)}{\langle x,N_{K}(x)\rangle^{n}}d\mu_{K}(x).\label{pasa2-1}
\end{equation}
As in the case of the classical affine surface area, several geometric
extensions for the $L_{p}$-affine surface area have been proven.
We refer to \cite{Schuett:2004,Werner:2007} and references therein.
These extensions all involve a construction of a special family of
convex bodies $\left\{ K_{t}\right\} _{t>0}$ which is related to
a given convex body $K$, where the $L_{p}$-affine surface area can
be written as a limit involving their volume difference. 

We prove the following theorem which shows that this can also be achieved
using weighted Ulam floating bodies. 
\begin{thm}
\label{thm:main} Let $K\sub\R^{n}$ be a convex body and $\phi:K\rightarrow\left(0,\infty\right)$
be a continuous function. Then 
\begin{equation}
\lim_{\delta\searrow0}\frac{\left|K\right|-\left|\text{\ensuremath{\mwfloat{\delta}K{\phi}}}\right|}{\delta^{\frac{2}{n+1}}}=c_{n}\int_{\partial K}\kappa_{K}\left(x\right)^{\frac{1}{n+1}}\phi\left(x\right)^{-\frac{2}{n+1}}\d{\mu_{K}\left(x\right)},\label{eq:main}
\end{equation}
where $c_{n}=2\frac{n+1}{n+3}\left(\frac{\left|B_{2}^{n-1}\right|}{n+1}\right)^{\frac{2}{n+1}}$,
and $B_{2}^{n}$ is the Euclidean unit ball in $\R^{n}$.
\end{thm}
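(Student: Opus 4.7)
The approach is to compute the leading-order asymptotic of $\left|K\right|-\left|\mwfloat{\delta}{K}{\phi}\right|$ by local analysis at each boundary point of $K$. The key tool is the support function formula for $\mwfloat{\delta}{K}{\phi}$: a bang-bang argument in the definition of the metronoid shows that, for each direction $\theta\in\Sph^{n-1}$, the point of $\partial\mwfloat{\delta}{K}{\phi}$ with outer normal $\theta$ is the $\phi$-weighted centroid of the cap $C_\delta^\phi(\theta):=K\cap\{x:\iprod{x}{\theta}\ge c_\delta(\theta)\}$, where $c_\delta(\theta)$ is chosen so that $\int_{C_\delta^\phi(\theta)}\phi\,dx=\delta$. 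Writing $r_\delta(\theta):=h_K(\theta)-h_{\mwfloat{\delta}{K}{\phi}}(\theta)$ for the radial gap, the inward-normal tube parametrization of $K\setminus\mwfloat{\delta}{K}{\phi}$ has Jacobian $1+o(1)$ as $\delta\searrow 0$, so
\[\left|K\right|-\left|\mwfloat{\delta}{K}{\phi}\right|=(1+o(1))\int_{\partial K}r_\delta(N_K(x))\,d\mu_K(x).\]
The theorem thus reduces to computing the pointwise asymptotic of $r_\delta$ and integrating.

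For the pointwise asymptotic, fix $x_0\in\partial K$ with positive Gaussian curvature $\kappa_K(x_0)$ and choose local coordinates so that $N_K(x_0)=e_n$ and $\partial K$ is the graph of a convex function $u\mapsto-\psi(u)$ with $\psi(u)=\frac{1}{2}u^{T}Hu+o(|u|^2)$ and $\det H=\kappa_K(x_0)$. As $\delta\searrow 0$ the cap $C_\delta^\phi(e_n)$ concentrates near $x_0$, so one may replace $\psi$ by its quadratic part and $\phi$ by $\phi(x_0)$ at the cost of an $o(1)$ factor. A change of variables diagonalizing $H$ and rescaling to a unit ball in $\R^{n-1}$ reduces both the cap volume and the first moment along $e_n$ to standard Euclidean integrals; the upshot is a cap depth $t_\delta$ satisfying
\[t_\delta=\frac{1}{2}\left(\frac{n+1}{\left|B_2^{n-1}\right|}\right)^{2/(n+1)}\kappa_K(x_0)^{1/(n+1)}\phi(x_0)^{-2/(n+1)}\delta^{2/(n+1)}(1+o(1))\]
and a $\phi$-weighted centroid at depth $\frac{n+1}{n+3}t_\delta$ below $\partial K$, i.e.\ $r_\delta(N_K(x_0))\sim\frac{n+1}{n+3}t_\delta$. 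Substituting into the surface integral produces the claimed limit of the form $c_n\int_{\partial K}\kappa_K^{1/(n+1)}\phi^{-2/(n+1)}\,d\mu_K$.

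The main obstacle is a uniform control of these pointwise asymptotics that permits interchanging limit and integration. Three ingredients are needed: (i) a uniform second-order paraboloid approximation for $\partial K$, supplied by Aleksandrov's theorem for $\mu_K$-a.e.\ $x_0$; (ii) a uniform constancy approximation $\phi\approx\phi(x_0)$ on $C_\delta^\phi(\theta)$, guaranteed by continuity of $\phi$ together with a uniform upper bound on cap diameters; and (iii) a dominating function for $r_\delta(N_K(x))/\delta^{2/(n+1)}$ justifying dominated convergence. For (iii), Theorem~\ref{thm:floating_metro_iso} yields the a priori inclusions $F_{\frac{e-1}{e}\delta}(K,\phi)\subset\mwfloat{\delta}{K}{\phi}\subset F_{\delta/e}(K,\phi)$, reducing matters to the already-established asymptotic bounds for the weighted floating bodies $F_\eta(K,\phi)$. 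Boundary points where $\kappa_K$ vanishes or is infinite are handled by squeeze and monotonicity arguments in the spirit of Sch\"utt-Werner's original proof for floating bodies, adapted to the metronoid setting by the replacement $t_\delta\leadsto \frac{n+1}{n+3}t_\delta$ in the local asymptotic.
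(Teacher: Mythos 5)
Your proposal follows the same overall strategy as the paper's proof: reduce to a boundary integral, compute the pointwise limit by approximating $\partial K$ locally with osculating balls (after a curvature-normalizing volume-preserving linear map), read off the constant from the explicit ball computation (cap height $t_\delta\sim\tfrac12\big(\tfrac{n+1}{|B_2^{n-1}|}\big)^{2/(n+1)}\kappa^{1/(n+1)}\phi^{-2/(n+1)}\delta^{2/(n+1)}$ with centroid at depth $\tfrac{n+1}{n+3}t_\delta$, matching Proposition~\ref{prop:floatingball}), and justify interchanging limit and integral via $\mwfloat{\delta}{K}{\phi}\supseteq F_{\frac{e-1}{e}\delta}(K,\phi)$ from Theorem~\ref{thm:floating_metro_iso} combined with the Sch\"utt--Werner integrability estimate $\int_{\partial K}r(x)^{-\alpha}\,d\mu_K<\infty$. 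The case split on the indicatrix of Dupin (ellipsoid vs.\ elliptic cylinder) is also the paper's.

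The one step that is not rigorous as written is the reduction $\left|K\right|-\left|\mwfloat{\delta}{K}{\phi}\right|=(1+o(1))\int_{\partial K}r_\delta(N_K(x))\,d\mu_K(x)$ via a ``normal-tube parametrization with Jacobian $1+o(1)$.'' The Jacobian of $x\mapsto x-tN_K(x)$ is $\prod_i\big(1-t\kappa_i(x)\big)$, and for a general convex body the principal curvatures are unbounded on $\partial K$, so there is no uniform-in-$x$ control as $\delta\searrow0$; nor is it clear the tube map bijects onto $K\setminus\mwfloat{\delta}{K}{\phi}$. The paper sidesteps this entirely by invoking the exact Sch\"utt--Werner identity (Lemma~\ref{difference}),
\[
\left|K\right|-\left|L\right|=\frac{1}{n}\int_{\partial K}\iprod{x}{N_K(x)}\left(1-\left|\frac{\norm{x_L}}{\norm{x}}\right|^n\right)\d{\mu_K(x)},
\]
valid for any nested pair of convex bodies with $0\in\mathrm{int}(L)$, where $x_L=\partial L\cap[0,x]$; the radial gap is then related to the normal gap inside the pointwise-limit computation. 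Replacing your tube heuristic with this exact formula closes the gap; the rest of your argument then goes through essentially as the paper's does.
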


\noindent For $-\infty\leq p\leq\infty$, $p\neq-n$, define the function
$\phi_{p}:\partial K\rightarrow[0,\infty]$ by 
\begin{equation}
\phi_{p}\left(x\right)=\frac{\iprod x{N_{K}\left(x\right)}^{\frac{n\left(n+1\right)\left(p-1\right)}{2\left(n+p\right)}}}{\kappa_{K}\left(x\right)^{\frac{n\left(p-1\right)}{2\left(n+p\right)}}}.\label{phi-p}
\end{equation}
Note that $\phi_{1}\left(x\right)=1$ for all $x\in\partial K$. If
$\kappa_{K}(x)=0$, which is the case, e.g., when $K=P$ is a polytope
and $x$ belongs to an $(n-1)$-dimensional facet of $P$, then 
\[
\phi_{p}\left(x\right)=\left\{ \begin{array}{cc}
\infty & \hskip5mmp>1\hskip1mm\mbox{or}\hskip1mmp<-n\\
0 & -n<p<1.
\end{array}\right.
\]
If $\kappa_{K}(x)=\infty$, which is the case, e.g., when $K=P$ is
a polytope and $x$ is a vertex of $P$, then 
\[
\phi_{p}\left(x\right)=\left\{ \begin{array}{cc}
0 & \hskip5mmp>1\hskip1mm\mbox{or}\hskip1mmp<-n\\
\infty & -n<p<1.
\end{array}\right.
\]
If $K$ and $p$ are such that $\phi_{p}$ is continuous on $\partial K$,
we extend $\phi_{p}$ to a continuous function on $K$ which we call
again $\phi_{p}$.

Applying Theorem \ref{thm:main} with $\phi_{p}$ yields the following
extension of $L_{p}$-affine surface areas. 
\begin{cor}
\label{cor:pASA}Let $K\sub\R^{n}$ be a convex body. If $\phi_{p}$
is continuous on $K$, then 
\[
\lim_{\delta\searrow0}\frac{\left|K\right|-\left|\text{\ensuremath{\mwfloat{\delta}K{\phi_{p}}}}\right|}{\delta^{\frac{2}{n+1}}}=c_{n}\ as_{p}(K).
\]
In particular, for $p=1$ we have 
\[
\lim_{\delta\searrow0}\frac{\left|K\right|-\left|\text{M}_{\delta}(K)\right|}{\delta^{\frac{2}{n+1}}}=c_{n}\ as_{1}(K).
\]
\end{cor}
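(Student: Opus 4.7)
The plan is to apply Theorem \ref{thm:main} directly with the weight $\phi = \phi_p$ and then verify that the right-hand side of \eqref{eq:main} collapses to $c_n\, as_p(K)$ via a bookkeeping of exponents. Since the hypothesis of the corollary explicitly assumes that $\phi_p$ is continuous on $K$ (so that the extension from $\partial K$ to $K$ makes sense), Theorem \ref{thm:main} immediately yields
\[
\lim_{\delta\searrow 0}\frac{|K|-|\mwfloat{\delta}{K}{\phi_p}|}{\delta^{2/(n+1)}}=c_n\int_{\partial K}\kappa_K(x)^{\frac{1}{n+1}}\phi_p(x)^{-\frac{2}{n+1}}\,d\mu_K(x).
\]
So the only content left is to identify the integrand with that of $as_p(K)$ as given in \eqref{pasa1-1}.

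From the definition \eqref{phi-p},
\[
\phi_p(x)^{-\frac{2}{n+1}}=\frac{\kappa_K(x)^{\frac{n(p-1)}{(n+1)(n+p)}}}{\langle x,N_K(x)\rangle^{\frac{n(p-1)}{n+p}}}.
\]
Multiplying by $\kappa_K(x)^{1/(n+1)}$ produces $\langle x,N_K(x)\rangle^{-n(p-1)/(n+p)}$ in the denominator and a power of $\kappa_K(x)$ whose exponent I need to simplify. That exponent is
\[
\frac{1}{n+1}+\frac{n(p-1)}{(n+1)(n+p)}=\frac{(n+p)+n(p-1)}{(n+1)(n+p)}=\frac{p(n+1)}{(n+1)(n+p)}=\frac{p}{n+p},
\]
which matches \eqref{pasa1-1} exactly. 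Thus the integrand is the $L_p$-affine surface area integrand, and the corollary follows.

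For the special case $p=1$, the formula \eqref{phi-p} gives $\phi_1\equiv 1$, so $\mwfloat{\delta}{K}{\phi_1}=\mfloat{\delta}{K}$, and the integrand reduces to $\kappa_K(x)^{1/(n+1)}$, giving the classical affine surface area $as_1(K)$. There is no real obstacle here — the only thing to watch is that the exponent simplification is done cleanly and that one invokes Theorem \ref{thm:main} only under the stated continuity hypothesis on $\phi_p$ (so that the weight is positive and continuous on $K$, as required). All other $p$-dependent degeneracies (the zeros and infinities of $\phi_p$ at flat spots or vertices discussed just before the corollary) are precisely what the continuity assumption rules out.
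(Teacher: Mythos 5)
Your proposal is correct and is exactly the argument the paper intends: the corollary is obtained by substituting $\phi_p$ into Theorem \ref{thm:main} and simplifying the exponents, and your computation $\tfrac{1}{n+1}+\tfrac{n(p-1)}{(n+1)(n+p)}=\tfrac{p}{n+p}$ correctly recovers the integrand of \eqref{pasa1-1}. The paper gives no further detail beyond this substitution, so your write-up matches (and slightly elaborates) its reasoning, including the correct observation that the continuity hypothesis is what licenses the application of Theorem \ref{thm:main}.
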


\subsection{Some additional notation}

Throughout the paper we denote by $B_{2}^{n}\left(u,\rho\right)$
the Euclidean ball with radius $\rho>0$ centered at $u$. Let $\norm{\cdot}$
denote the standard Euclidean norm on $\R^{n}$. For $u,v\in\R^{n}$,
$\left[u,v\right]$ will denote the line segment between $u$ and
$v$. We denote the interior of a set $C\sub\R^{n}$ by ${\rm int}\left(C\right)$.
In the sequel, we will always assume that our convex body $K$ contains
the origin in its interior. Finally, $c,c_{0},c_{1},{\rm etc.}$ shall
denote absolute constants that may change from line to line. Let $O_{n}$
denote the orthogonal group of dimension $n$.\\

The paper is organized as follows. In Section \ref{sec:Properties}
we discuss some properties of Ulam's floating bodies, and prove Theorems
\ref{thm:floating_metro_iso} and \ref{thm:Smoothness }. Section
\ref{sec:ASA} is devoted for the proof of Theorem \ref{thm:main}.

\subsection*{Acknowledgements}

We thank Ning Zhang for pointing out Proposition \ref{prop:symmetry}
to us. We also thank Monika Ludwig for useful comments and references.
This material is based upon work supported by the National Science
Foundation under Grant No. DMS-1440140 while the authors were in residence
at the Mathematical Sciences Research Institute in Berkeley, California,
during the Fall 2017 semester.

\section{\label{sec:Properties}Properties of Ulam's floating bodies}

\subsection{Basic properties}

\noindent For $\theta\in\Sph^{n-1}$ and $d\in\R$, we define the
hyperplane orthogonal to $\theta$ at distance $d$ from the origin
by $H\left(\theta,\,d\right):=\left\{ x\in\R^{n}\,:\,\iprod x{\theta}=d\right\} $.
We also define one of the closed half-spaces determined by $H\left(\theta,\,d\right)$
by $H^{+}\left(\theta,\,d\right):=\left\{ x\in\R^{n}\,:\,\iprod x{\theta}\ge d\right\} $.
The function 
\begin{eqnarray*}
 &  & \Sph^{n-1}\times\mathbb{R}\longrightarrow\left[0,\int_{K}\phi(z)dz\right]\\
 &  & (\theta,d)\longrightarrow\text{\ensuremath{\delta\left(\theta,\,d\right)}}:=\int_{K\cap\H{\theta}d}\phi\left(z\right)\d z
\end{eqnarray*}
is continuous in the product metric. Observe also that the function
$\left(\theta,r\right)\rightarrow\left(\theta,\delta\left(\theta,\,r\right)\right)$
is a bijection from 
\begin{align*}
\left\{ \left(\theta,r\right)\,:\,\theta\in\Sph^{n-1},\,-h_{K}\left(-\theta\right)\le r\le h_{K}\left(\theta\right)\right\} 
\end{align*}
to $\Sph^{n-1}\times\left[0,\,\int_{K}\phi\left(x\right)\d x\right]$.
We denote 
\begin{eqnarray}
\left(\theta,\,\delta\right)\rightarrow\left(\theta,\,d\left(\theta,\delta\right)\right)\label{stetig}
\end{eqnarray}
as the inverse function of $\left(\theta,\,d\right)\rightarrow\left(\theta,\,\delta\left(\theta,\,d\right)\right)$,
which is also a continuous function. Abusing the notation we denote
\begin{equation}
\H{\theta}{\delta}:=\H{\theta}{d\left(\theta,\,\delta\right)},\label{hyper}
\end{equation}
Let $h_{\mwfloat{\delta}K{\phi}}\left(\theta\right)$ be the support
function of $\mwfloat{\delta}K{\phi}$. By definition of $\mwfloat{\delta}K{\phi}$,
\begin{equation}
h_{\mwfloat{\delta}K{\phi}}\left(\theta\right)=\max_{x\in\mwfloat{\delta}K{\phi}}\iprod{\theta}x=\sup_{0\le f\le1,\int_{K}\frac{f\left(y\right)\phi\left(y\right)}{\delta}\d y=1}\int_{K}\iprod y{\theta}\frac{f\left(y\right)}{\delta}\phi\left(y\right)dy.\label{eq:supportfunction}
\end{equation}
It follows from \cite[Proposition 2.1]{Huang2017} that the maximum
in the above equation is attained for the function 
\[
f={\bf \one}_{K\cap H^{+}\left(\theta,\,\delta\right)}
\]
and this maximal function is unique as $\phi\left(y\right){\bf \one}_{K}\d y$
is absolutely continuous with respect to Lebesgue measure. Thus we
have the following proposition which is essentially a restatement
of Proposition 2.1 of \cite{Huang2017}. 
\begin{prop}
\label{prop:ExtremePointOfMetronoid}Let $K\sub\R^{n}$ be a convex
body and $\phi:K\rightarrow\left(0,\infty\right)$ be a continuous
function. Let $\theta\in\Sph^{n-1}$ and $\delta\in\left(0,\int_{K}\phi\left(y\right)\d y\right)$.
Then, the barycenter of $K\cap\H{\theta}{\delta}$ with respect to
the weight function $\phi$, 
\[
x_{K,\,\phi}\left(\theta,\,\delta\right):=\frac{\int_{K\cap\H{\theta}{\delta}}y\phi\left(y\right)\d y}{\delta}
\]
is the unique point in $\partial\mwfloat{\delta}K{\phi}$ with normal
$\theta$. In particular, $\mwfloat{\delta}K{\phi}$ is strictly convex.
Moreover, 
\[
h_{\mwfloat{\delta}K{\phi}}\left(\theta\right)=\frac{\int_{K\cap\H{\theta}{\delta}}\iprod{\theta}y\phi\left(y\right)\d y}{\delta}.
\]
Extending by limit, $h_{\mwfloat{\delta}K{\phi}}$ is a continuous
function on $\Sph^{n-1}\times\left[0,\int_{K}\phi\left(y\right)\d y\right]$
and $h_{\mwfloat 0K{\phi}}$ is the support function $h_{K}$ of $K$. 
\end{prop}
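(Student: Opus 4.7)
The plan is to deduce every assertion from a single maximization principle applied to the variational definition \eqref{eq:supportfunction} of the support function. For fixed $\theta\in\Sph^{n-1}$, the problem is to maximize the linear functional
\[
f\mapsto\int_{K}\iprod y{\theta}f(y)\phi(y)\d y
\]
over the ``bathtub'' class $\{f:0\le f\le 1,\ \int_{K}f(y)\phi(y)\d y=\delta\}$; geometrically, one wants to place a total $\phi$-mass $\delta$ on points of $K$ where $\iprod y{\theta}$ is as large as possible.

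I would invoke the standard bathtub principle applied to the absolutely continuous measure $\phi(y)\one_{K}(y)\d y$: the unique maximizer (as an $L^{\infty}$ function, modulo null sets) is the indicator of the super-level set $\{y\in K:\iprod y{\theta}\ge c\}$, where $c$ is chosen so that the $\phi$-mass of this set equals $\delta$. In view of the definitions \eqref{stetig} and \eqref{hyper}, this threshold is precisely $c=d\left(\theta,\delta\right)$ and the extremizer is $f=\one_{K\cap\H{\theta}{\delta}}$; this is the content of Proposition 2.1 of \cite{Huang2017}. Substituting this optimal $f$ back into \eqref{eq:supportfunction} yields both the stated formula
\[
h_{\mwfloat{\delta}K{\phi}}(\theta)=\frac{1}{\delta}\int_{K\cap\H{\theta}{\delta}}\iprod y{\theta}\phi(y)\d y
\]
and the identification of the unique point of $\mwfloat{\delta}K{\phi}$ at which $\iprod x{\theta}$ attains $h_{\mwfloat{\delta}K{\phi}}(\theta)$ as the $\phi$-barycenter $x_{K,\phi}(\theta,\delta)$; this is therefore the unique boundary point of $\mwfloat{\delta}K{\phi}$ with outer normal $\theta$.

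Strict convexity is then immediate, since every supporting half-space meets $\partial\mwfloat{\delta}K{\phi}$ in a single point and hence the boundary contains no line segment. For continuity of $h_{\mwfloat{\delta}K{\phi}}(\theta)$ on $\Sph^{n-1}\times(0,\int_{K}\phi\d y)$, I would combine the continuity of $(\theta,\delta)\mapsto d(\theta,\delta)$ established at \eqref{stetig} with joint continuity in $(\theta,d)$ of the integral $\int_{K\cap\H{\theta}d}\iprod y{\theta}\phi(y)\d y$, which follows from dominated convergence since the half-space indicator is almost-everywhere continuous in its parameters. The endpoint behavior $h_{\mwfloat{0}K{\phi}}=h_{K}$ is handled by taking $\delta\searrow 0$: the slab $K\cap\H{\theta}{\delta}$ shrinks onto the exposed face of $K$ in direction $\theta$, and the $\phi$-weighted barycenter's inner product with $\theta$ converges to $h_{K}(\theta)$.

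The only step requiring genuine care is the uniqueness clause in the bathtub principle, and this is where the absolute continuity of $\phi(y)\one_{K}\d y$ with respect to Lebesgue measure is essential: any competitor $f$ that differs from $\one_{K\cap\H{\theta}{\delta}}$ on a set of positive Lebesgue measure must transfer mass from $\{\iprod y{\theta}\ge d(\theta,\delta)\}$ to $\{\iprod y{\theta}<d(\theta,\delta)\}$, strictly decreasing the linear functional. Everything else — the support function formula, the barycenter identification, strict convexity, continuity, and the limit as $\delta\to 0$ — is a routine consequence.
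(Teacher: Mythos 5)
Your proposal follows essentially the same approach as the paper: both defer the identification of the maximizer and its uniqueness to Proposition~2.1 of \cite{Huang2017} together with the absolute continuity of $\phi\one_{K}\d y$ (your bathtub-principle discussion is the same idea spelled out), and both establish continuity on the interior by composing the jointly continuous integral $g(\theta,d)=\int_{K\cap\H{\theta}d}\iprod{\theta}{y}\phi(y)\d y$ with the jointly continuous inverse map $(\theta,\delta)\mapsto d(\theta,\delta)$. One small imprecision: the proposition asserts joint continuity of $h_{\mwfloat{\delta}K{\phi}}$ on the closed product $\Sph^{n-1}\times\left[0,\int_{K}\phi\d y\right]$, but your argument at the endpoint only treats the one-variable limit $\delta\searrow 0$ for fixed $\theta$. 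The paper closes this by the squeeze $d(\theta,\delta)\le h_{\mwfloat{\delta}K{\phi}}(\theta)\le h_{K}(\theta)$ for $\delta>0$ together with $d(\theta,0)=h_{K}(\theta)$; since $d$ is jointly continuous and $h_{K}$ is continuous, this gives joint continuity at every $(\theta_{0},0)$. Adding this sandwich (or an equivalent uniform estimate) would complete the endpoint claim.
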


We remark that we will use $x\left(\theta,\,\delta\right)$ in short
for $x_{K,\,\phi}\left(\theta,\,\delta\right)$ whenever there is
no ambiguity (which is actually everywhere, except for the proof of
Theorem \ref{thm:Smoothness }).
\begin{proof}
We only need to show that $h_{\mwfloat{\delta}K{\phi}}$ is continuous
as a function of $\theta$ and $\delta$. We put $g\left(\theta,\,d\right)=\int_{K\cap H^{+}\left(\theta,\,d\right)}\iprod{\theta}y\phi\left(y\right)\d y$.
Then $g$ is continuous in the product metric. By the above, the function
$\left(\theta,\,\delta\right)\rightarrow\left(\theta,\,d\left(\theta,\delta\right)\right)$
is continuous in the product metric. Now 
\[
h_{\mwfloat{\delta}K{\phi}}\left(\theta\right)=\frac{g\left(\theta,\,d\left(\theta,\,\delta\right)\right)}{\delta},
\]
and therefore it is continuous for $0<\delta\le\int_{K}\phi\left(y\right)\d y$,
$\theta\in\Sph^{n-1}$. Moreover, for all $\theta\in\Sph^{n-1}$ and
for all $\delta\in(0,\int_{K}\phi\left(y\right)\d y]$, 
\[
d\left(\theta,\delta\right)\le h_{\mwfloat{\delta}K{\phi}}\left(\theta\right)\le h_{K}\left(\theta\right).
\]
 Note that for $\delta=0$, $d\left(\theta,0\right)=h_{K}(\theta)$.
Let $\theta_{0}\in\Sph^{n-1}$ be fixed. For $\eps>0$, there exists
an open ball $O$ containing $\left(\theta_{0},\,0\right)\in\Sph^{n-1}\times\left[0,\int_{K}\phi\left(y\right)\d y\right]$
such that for $\left(\theta_{1},\,\delta_{1}\right)\in O$ we have
$\left|h_{K}\left(\theta_{0}\right)-d\left(\theta_{1},\,\delta_{1}\right)\right|<\eps$.
Thus, we conclude that $\left|h_{K}\left(\theta_{0}\right)-h_{\mwfloat{\delta_{1}}K{\phi}}\left(\theta_{1}\right)\right|<\eps$
and hence $h_{\mwfloat{\delta}K{\phi}}\left(\theta\right)$ is continuous
at $\left(\theta_{0},\,0\right)$ if we define $h_{\mwfloat 0K{\phi}}\left(\theta_{0}\right):=h_{K}\left(\theta_{0}\right)$.
\end{proof}

\subsection{\label{sec:Ulam_problem}Ulam's floating body problem }

Let $K\sub\R^{n}$ be a body with a uniform density $0<\rho<1$. Suppose
we put $K$ in a liquid of uniform density $1$, such that the surface
of the liquid is orthogonal to the direction $u$. Let $g$ be the
barycenter of $K$, and $b$ its center of buoyancy, that is the barycenter
of the portion of $K$ which is submerged in the liquid. We say that
$K$ floats in equilibrium in direction $u$ if the barycenter of
$K$ is directly above its buoyancy center, namely $g-b$ is parallel
to $u$. 

A well-known fact in hydrostatics which was pointed out to us by Ning
Zhang (see e.g., \cite[Theorem 2]{Gilbert91}) states that if a body
floats in liquid, then its barycenter, its center of buoyancy, and
the barycenter of the portion of the body that is above the surface
of the liquid, are all collinear. In terms of $M_{\delta}\left(K\right)$,
this property translates to the following proposition:
\begin{prop}
\label{prop:symmetry}Let $K\sub\R^{n}$ be a convex body with ${\rm bar}\left(K\right)=0$
and $\left|K\right|=1$. Then, $\mfloat{1-\delta}K=-\frac{\delta}{1-\delta}\mfloat{\delta}K$. 
\end{prop}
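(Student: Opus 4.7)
The natural approach is to compare support functions directly, using the explicit formula $h_{\mfloat{\delta}K}(\theta)=\frac{1}{\delta}\int_{C_{\delta}(\theta)}\iprod{x}{\theta}\d x$ from Proposition \ref{prop:ExtremePointOfMetronoid} (with $\phi\equiv1$). Since $\left|K\right|=1$, the key combinatorial observation is that for any $\theta\in\Sph^{n-1}$, the cap $C_{1-\delta}(\theta)$ and the cap $C_{\delta}(-\theta)$ are precisely the two pieces of $K$ cut by the hyperplane $H(\theta,d(\theta,1-\delta))=H(-\theta,d(-\theta,\delta))$, so they partition $K$ up to a Lebesgue null set.

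The first step is to record, from this partition and the hypothesis $\mathrm{bar}(K)=0$, the identity
\[
\int_{C_{1-\delta}(\theta)}x\d x=-\int_{C_{\delta}(-\theta)}x\d x,
\]
obtained by integrating $x\mapsto x$ over $K$ and splitting. Pairing with $\theta$ and dividing by $1-\delta$ gives
\[
h_{\mfloat{1-\delta}K}(\theta)=\frac{1}{1-\delta}\int_{C_{1-\delta}(\theta)}\iprod{x}{\theta}\d x=-\frac{1}{1-\delta}\int_{C_{\delta}(-\theta)}\iprod{x}{\theta}\d x.
\]

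The second step is a direct computation of the right-hand side. For the body $L:=-\frac{\delta}{1-\delta}\mfloat{\delta}K$, homogeneity and reflection of support functions give
\[
h_{L}(\theta)=\frac{\delta}{1-\delta}h_{\mfloat{\delta}K}(-\theta)=\frac{\delta}{1-\delta}\cdot\frac{1}{\delta}\int_{C_{\delta}(-\theta)}\iprod{x}{-\theta}\d x=-\frac{1}{1-\delta}\int_{C_{\delta}(-\theta)}\iprod{x}{\theta}\d x,
\]
which matches the expression for $h_{\mfloat{1-\delta}K}(\theta)$. Since support functions determine convex bodies, this yields $\mfloat{1-\delta}K=-\frac{\delta}{1-\delta}\mfloat{\delta}K$.

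I expect no substantial obstacle here: the only thing one needs to be careful about is the orientation convention relating $C_{1-\delta}(\theta)$ to $C_{\delta}(-\theta)$ (i.e.\ verifying that these really are the two complementary caps determined by the same hyperplane, and that changing $\delta\mapsto1-\delta$ on one side corresponds to negating the normal on the other). Once that identification is made, the centering hypothesis $\mathrm{bar}(K)=0$ supplies exactly the cancellation needed, and the proof reduces to the one-line support-function identity above.
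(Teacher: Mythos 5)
Your proposal is correct and is essentially the paper's own argument: both split $K$ along the hyperplane that simultaneously cuts off $C_{1-\delta}(\theta)$ and $C_{\delta}(-\theta)$, use ${\rm bar}(K)=0$ to get $\delta\,x(\theta,\delta)+(1-\delta)\,x(-\theta,1-\delta)=0$, and conclude via the characterization of $\partial\mfloat{\delta}K$ from Proposition \ref{prop:ExtremePointOfMetronoid}. The only cosmetic difference is that you phrase the final identification through support functions, while the paper states it directly as an identity between the barycenter points; these are equivalent.
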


\begin{rem}
An immediate consequence of the above proposition is that for any
convex body $K\sub\R^{n}$, $\mfloat{\frac{1}{2}}K$ is centrally-symmetric.
Moreover, by Theorem \ref{thm:floating_metro_iso} and Proposition
\ref{prop:Relation_Zp}, it follows that $\mfloat{\frac{1}{2}}K$
is isomorphic to $B_{2}^{n}$.
\end{rem}

\begin{proof}
Recall that $h_{M_{\delta}\left(K\right)}\left(\theta\right)=\iprod{x\left(\theta,\delta\right)}{\theta}$
where
\[
x\left(\theta,\,\delta\right):=\frac{\int_{K\cap\H{\theta}{\delta}}y\d y}{\delta}
\]
and $H^{+}\left(\theta,\,\delta\right)$ is the half space in direction
$\theta$ such that $\left|K\cap H^{+}\left(\theta,\,\delta\right)\right|=\delta$.
Observe that 
\begin{align*}
0 & ={\rm bar}\left(K\right)=\int_{K}x\d x=\int_{K\cap H^{+}\left(\theta,\,\delta\right)}x\d x+\int_{K\cap H^{-}\left(\theta,\,\delta\right)}x\d x,
\end{align*}
which is equivalent to 

\[
0=\delta x\left(\text{\ensuremath{\theta},\,\ensuremath{\delta}}\right)+\left(1-\delta\right)x\left(-\theta,1-\delta\right).
\]

\noindent Therefore, $x\left(-\theta,1-\delta\right)=-\frac{\delta}{1-\delta}x\left(\theta,\,\delta\right)$,
which is equivalent to $\mfloat{1-\delta}K=-\frac{\delta}{1-\delta}\mfloat{\delta}K$.
\end{proof}
As mentioned in the introduction, Ulam's long-standing floating problem
asks whether the only body of uniform density that floats in equilibrium
in every orientation must be a Euclidean ball. A direct consequence
of Proposition \ref{prop:symmetry} is that Ulam's floating problem
can be restated in terms of $M_{\delta}\left(K\right)$:
\begin{cor}
Ulam's floating problem is equivalent to the following problem: Suppose
$M_{\delta}\left(K\right)$ is a Euclidean ball. Must $K$ be a Euclidean
ball?
\end{cor}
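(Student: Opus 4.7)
The plan is to translate the floating-equilibrium condition, via Proposition~\ref{prop:ExtremePointOfMetronoid}, into a geometric condition on the boundary of $M_\delta(K)$, and then to recognize that condition as characterizing a Euclidean ball.

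First, I would normalize so that $|K|=1$ and $\text{bar}(K)=0$, and set $\delta=\rho$. In orientation $u$, the center of buoyancy is the barycenter of the submerged portion $K\cap H^+(-u,\delta)$, which by Proposition~\ref{prop:ExtremePointOfMetronoid} is precisely $x(-u,\delta)$---the unique boundary point of $M_\delta(K)$ with outward normal $-u$. Because $\text{bar}(K)=0$, the identity $\delta\,x(-u,\delta)+(1-\delta)\,x(u,1-\delta)=0$, which underlies the proof of Proposition~\ref{prop:symmetry}, forces the three hydrostatic points (barycenter, center of buoyancy, barycenter of above-water portion) to be collinear through the origin automatically. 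Hence the floating condition $b_u - \text{bar}(K)\parallel u$ reduces to $x(-u,\delta)\parallel u$; equivalently, for every $v\in\Sph^{n-1}$, the boundary point $x(v,\delta)$ of $M_\delta(K)$ lies on the ray $\{tv:t>0\}$.

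Next, I would show that a convex body with this ``every outward normal is radial'' property is a Euclidean ball centered at the origin. Since $M_\delta(K)$ is strictly convex by Proposition~\ref{prop:ExtremePointOfMetronoid}, $h:=h_{M_\delta(K)}$ is $C^1$ away from the origin with $\nabla h(v)=x(v,\delta)$. The condition $\nabla h(v)\parallel v$, combined with Euler's identity $\iprod{v}{\nabla h(v)}=h(v)$, forces the tangential gradient of $h|_{\Sph^{n-1}}$ to vanish; hence $h$ is constant on $\Sph^{n-1}$, and by 1-homogeneity $h(v)=R\|v\|$ for some $R>0$, so $M_\delta(K)=R B_2^n$. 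The converse is immediate: every boundary point of a ball centered at the origin is radially aligned with its outward normal.

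Combining these steps with the translation invariance $M_\delta(K+v)=M_\delta(K)+v$ (which removes the centering normalization), we obtain the equivalence: $K$ floats in every orientation for density $\rho=\delta/|K|$ if and only if $M_\delta(K)$ is a Euclidean ball. Since the property ``$K$ is a Euclidean ball'' is itself translation-invariant, Ulam's problem and the problem of the corollary have the same answer. I expect the only substantive step to be the support-function argument in the third paragraph; the rest is bookkeeping between the definition of $M_\delta(K)$, Proposition~\ref{prop:ExtremePointOfMetronoid}, and the hydrostatic collinearity fact.
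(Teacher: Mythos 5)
Your overall reduction matches what the paper has in mind: use Proposition~\ref{prop:ExtremePointOfMetronoid} to identify the center of buoyancy with the boundary point $x(-u,\delta)$ of $M_\delta(K)$, observe that with $\text{bar}(K)=0$ the floating condition in orientation $u$ becomes $x(-u,\delta)\parallel u$, and then recognize ``every cap barycenter is radial'' as the statement that $M_\delta(K)$ is a Euclidean ball centered at the origin. The Euler-identity computation in your third paragraph is a clean and correct way to carry out that last step; the positivity $\iprod{x(v,\delta)}{v}>0$ that you need to put $x(v,\delta)$ on the \emph{positive} ray does hold, since $\text{bar}(K)$ is an interior point of $M_\delta(K)$, and this is worth a sentence. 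One small structural remark: the appeal to the hydrostatic collinearity fact and to Proposition~\ref{prop:symmetry} in your second paragraph is not actually doing any work--once $\text{bar}(K)=0$, the condition $\text{bar}(K)-b_u\parallel u$ is already, by definition, $x(-u,\delta)\parallel u$.

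There is, however, a genuine imprecision at the very end. What your argument establishes is that $K$ floats in every orientation if and only if $M_\delta(K)$ is a Euclidean ball \emph{centered at} $\text{bar}(K)$; your closing sentence drops the centering and asserts ``if and only if $M_\delta(K)$ is a Euclidean ball.'' Translation invariance does not close this gap: translating $K$ so that a hypothetical ball $M_\delta(K)$ becomes centered at the origin also translates $\text{bar}(K)$ by the same vector, so a mismatch between the ball's center and the barycenter persists. As a consequence, your argument gives cleanly that a positive answer to the restated problem implies a positive answer to Ulam's problem (since ``floats in every orientation'' is a stronger hypothesis than ``$M_\delta(K)$ is a ball''), but the converse direction of the asserted equivalence additionally requires showing that whenever $M_\delta(K)$ is a Euclidean ball its center must coincide with $\text{bar}(K)$, which you have not argued. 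You should either supply that observation or restrict the final claim to the centered version.
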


We remark that this new form of Ulam's problem remains open if one
replaces $M_{\delta}\left(K\right)$ with the convex floating body
$K_{\delta}$. Another related open problem asks whether a convex
body $K$ is centrally-symmetric if and only if $K_{\delta}$ is symmetric.
When replaced with $M_{\delta}\left(K\right)$, this problem seems
also interesting. Note that Auerbach's counterexample in \cite{Auerbach1938}
to Ulam's problem in the plane, provides an example for a non-centrally-symmetric
convex body $K\sub\R^{2}$ for which $M_{\delta}\left(K\right)$ is
a Euclidean ball, thus answer both of the above problems in this case. 

\subsection{\label{sec:F_M}Connection to floating bodies.}

We begin with the proof of Theorem \ref{thm:floating_metro_iso}:

\begin{proof}[Proof of Theorem \ref{thm:floating_metro_iso} ]
  By Proposition \ref{prop:ExtremePointOfMetronoid} we have that
\[
h_{\mwfloat{\delta}K{\phi}}\left(\theta\right)=\frac{1}{\delta}\int_{K\cap\left\{ y\in\R^{n}\,:\,\iprod y{\theta}\ge d\left(\theta,\,\delta\right)\right\} }\iprod x{\theta}\phi\left(x\right)\d x\ge d\left(\theta,\,\delta\right)\ge h_{F_{\delta}\left(K,\,\phi\right)}\left(\theta\right).
\]
Therefore, $F_{\delta}\left(K,\phi\right)\sub\mwfloat{\delta}K{\phi}$. 

Fix $\delta>0$ and $\theta\in\Sph^{n-1}$. For $\beta\in\Sph^{n-1}$,
let $H_{\beta}^{+}:=\left\{ y\in\R^{n}\,:\,\iprod y{\beta}\ge\iprod{x\left(\theta,\,\delta\right)}{\beta}\right\} .$
Consider the function $g_{\beta}\left(t\right):=\int_{\left\{ y\,:\,\iprod y{\beta}=t\right\} }{\bf 1}_{K\cap H^{+}\left(\theta,\,\delta\right)}\left(y\right)\phi\left(y\right)\d y$.
Since $\phi$ is log-concave, it follows by Prékopa-Leindler's inequality
that $g_{\beta}$ is also log-concave. By \cite[Lemma 5.4]{Lovasz:2007}
(a generalization of Grünbaum's inequality), we have that 
\[
\frac{1}{e}\int g_{\beta}\left(t\right)\d t\le\int_{t\ge\iprod{x\left(\theta,\,\delta\right)}{\beta}}g_{\beta}\left(t\right)\d t\le\left(1-\frac{1}{e}\right)\int g_{\beta}\left(t\right)\d t
\]
or equivalently, 
\[
\frac{1}{e}\int_{K\cap H^{+}\left(\theta,\,\delta\right)}\phi\left(y\right)\d y\le\int_{H_{\beta}^{+}\cap K\cap H^{+}\left(\theta,\,\delta\right)}\phi\left(y\right)\d y\le\left(1-\frac{1}{e}\right)\int_{K\cap H^{+}\left(\theta,\,\delta\right)}\phi\left(y\right)\d y.
\]
Taking $\beta=\theta$, we have $H_{\theta}^{+}\cap K\cap H^{+}\left(\theta,\,\delta\right)=H_{\theta}^{+}\cap K$.
Since $\int_{H_{\theta}^{+}\cap K}\phi\left(y\right)\d y\le\left(1-\frac{1}{e}\right)\delta$,
we obtain 
\begin{align*}
h_{F_{\left(1-\frac{1}{e}\right)\delta}\left(K,\,\phi\right)}\left(\theta\right)\le d\left(\theta,\,\left(1-\frac{1}{e}\right)\delta\right) & \le\iprod{x\left(\theta,\,\delta\right)}{\theta}=h_{\mwfloat{\delta}K{\phi}}\left(\theta\right),
\end{align*}
and thus $F_{\left(1-\frac{1}{e}\right)\delta}\left(K,\,\phi\right)\sub\mwfloat{\delta}K{\phi}.$
\begin{figure}[h]
\begin{centering}
\includegraphics[scale=0.8]{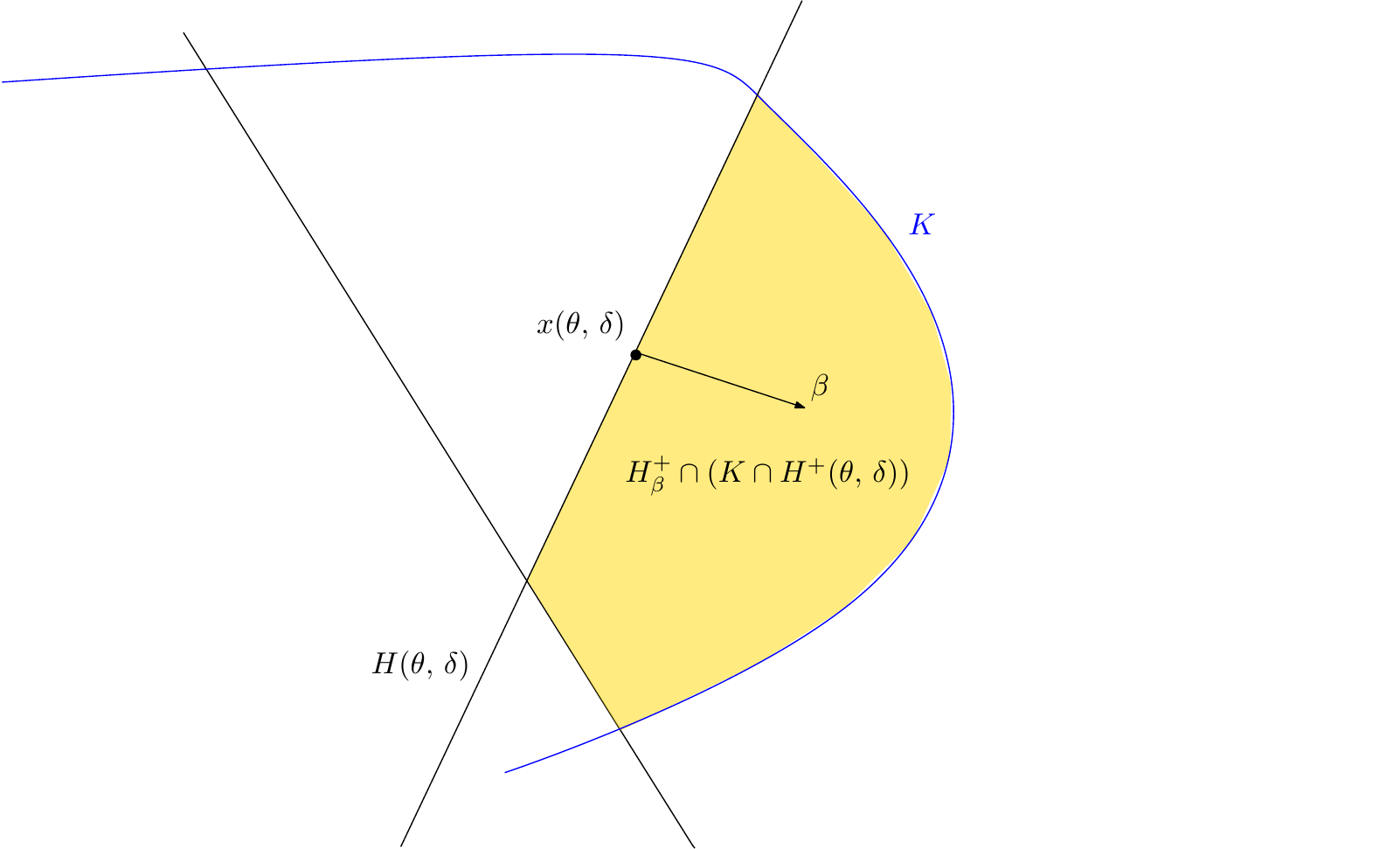}
\par\end{centering}
\caption{\label{fig:beta}}
\end{figure}
On the other hand (see Figure \ref{fig:beta}), for $\beta\in\Sph^{n-1}$
we have 
\[
\int_{H_{\beta}^{+}\cap K}\phi\left(y\right)\d y\ge\int_{H_{\beta}^{+}\cap K\cap H^{+}\left(\theta,\,\delta\right)}\phi\left(y\right)\d y\ge\frac{\delta}{e}=\int_{H^{+}\left(\beta,\,\frac{\delta}{e}\right)\cap K}\phi\left(y\right)\d y.
\]
Hence, $d\left(\beta,\,\frac{\delta}{e}\right)\ge\iprod{x\left(\theta,\,\delta\right)}{\beta}$.
Therefore we have 
\[
x\left(\theta,\,\delta\right)\in\bigcap_{\beta\in\Sph^{n-1}}\left\{ y\,:\,\iprod y{\beta}\le d\left(\theta,\,\frac{\delta}{e}\right)\right\} =F_{\frac{\delta}{e}}\left(K,\,\phi\right).
\]
Since $\mwfloat{\delta}K{\phi}$ and $F_{\frac{\delta}{e}}\left(K,\,\phi\right)$
are convex sets, we conclude that $\mwfloat{\delta}K{\phi}\sub F_{\frac{\delta}{e}}\left(K,\,\phi\right).$
\end{proof}
\noindent 

The $L_{p}$ centroid bodies were introduced by Lutwak and Zhang \cite{Lutwak:1997}
(using a different normalization) as follows: For a convex body $K$
in $\mathbb{R}^{n}$ of volume $1$ and $1\leq p\leq\infty$, the
$L_{p}$ centroid body $Z_{p}(K)$ is this convex body whose support
function is given by: 
\begin{equation}
h_{Z_{p}(K)}(\theta)=\left(\int_{K}|\langle x,\theta\rangle|^{p}dx\right)^{1/p}.\label{def:Zp}
\end{equation}

\noindent It is known that the floating body $K_{\delta}$ is close
to some {\em $L_{p}$ centroid body} of $K$. More precisely, one
has:
\begin{thm}
\label{thm: ZpFloating}( \cite[Theorem 2.2]{Paouris_Werner:2012})
Let $K$ be a symmetric convex body of volume $1$. For $\delta\in\left(0,\frac{1}{2}\right)$,
we have 
\[
c_{1}Z_{\log\left(\frac{e}{2\delta}\right)}\left(K\right)\sub K_{\delta}\sub c_{2}Z_{\log\left(\frac{e}{2\delta}\right)}\left(K\right),
\]
where $c_{1},c_{2}>0$ are universal constants. 
\end{thm}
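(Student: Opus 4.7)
\textbf{Proof plan for Theorem~\ref{thm: ZpFloating}.} The plan is to prove the two inclusions separately by comparing, for each fixed direction $\theta\in\Sph^{n-1}$, the support function $h_{K_{\delta}}(\theta)$ with $h_{Z_{p}(K)}(\theta)$ where $p=\log(e/(2\delta))$. The common thread is that both quantities are controlled by the tail distribution of the linear functional $X_{\theta}(x):=\iprod{x}{\theta}$ of a uniformly distributed point in $K$; on the one hand $|\{x\in K:X_{\theta}(x)\ge h_{K_{\delta}}(\theta)\}|=\delta$ by the very definition of the convex floating body, and on the other hand $h_{Z_{p}(K)}(\theta)^{p}=\int_{K}|X_{\theta}|^{p}\d x$ is the $p$-th moment of this tail. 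Moving between them uses Markov's inequality in one direction and Borell's lemma (which requires symmetry of $K$) in the other.

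The easier inclusion is $K_{\delta}\sub c_{2}Z_{p}(K)$. Fix $\theta$ and set $t=h_{K_{\delta}}(\theta)$. By Markov,
\[
\delta\;=\;\left|\{x\in K:X_{\theta}(x)\ge t\}\right|\;\le\;t^{-p}\int_{K}|X_{\theta}|^{p}\d x\;=\;t^{-p}\,h_{Z_{p}(K)}(\theta)^{p},
\]
so $h_{K_{\delta}}(\theta)\le\delta^{-1/p}\,h_{Z_{p}(K)}(\theta)$. A direct check shows that for $p=\log(e/(2\delta))$ and $\delta\in(0,1/2)$ one has $\delta^{-1/p}\le e$, uniformly in $\delta$, yielding the desired inclusion with $c_{2}=e$.

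For the reverse inclusion $c_{1}Z_{p}(K)\sub K_{\delta}$, the task is to upper bound $h_{Z_{p}(K)}(\theta)$ by $h_{K_{\delta}}(\theta)$. Here I would invoke Borell's lemma applied to the symmetric convex body $A=\{x:|X_{\theta}(x)|\le h_{K_{\delta}}(\theta)\}$, for which $|A\cap K|=1-2\delta$ by the symmetry of $K$. Borell's lemma then gives an exponential tail bound of the form
\[
\left|\{x\in K:|X_{\theta}(x)|>s\,h_{K_{\delta}}(\theta)\}\right|\;\le\;2\delta\left(\tfrac{2\delta}{1-2\delta}\right)^{(s-1)/2}\qquad(s\ge 1).
\]
Writing $h_{Z_{p}(K)}(\theta)^{p}=p\int_{0}^{\infty}t^{p-1}|\{|X_{\theta}|>t\}|\d t$ and splitting at $t=h_{K_{\delta}}(\theta)$, the contribution from $t\le h_{K_{\delta}}(\theta)$ is bounded by $h_{K_{\delta}}(\theta)^{p}$, while for the tail one makes the substitution $t=s\,h_{K_{\delta}}(\theta)$ and uses the above bound.

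The crux of the argument, and the step I expect to be the main obstacle, is the resulting tail integral
\[
p\int_{1}^{\infty}s^{p-1}\,2\delta\,e^{-\beta(s-1)}\d s,\qquad \beta=\tfrac{1}{2}\log\tfrac{1-2\delta}{2\delta},
\]
which must be shown to be $\le C^{p}$ uniformly over $\delta\in(0,1/2)$ for the calibrated choice $p=\log(e/(2\delta))$. One handles this via a saddle-point computation: the integrand $s^{p-1}e^{-\beta s}$ peaks at $s_{*}=(p-1)/\beta$, and since $\beta\sim p/2$ for small $\delta$, we have $s_{*}$ of order $1$, so the integral behaves like $s_{*}^{p}e^{-\beta s_{*}}$ times a factor of order $1$, and the exponential decay $e^{-\beta s_{*}}$ cancels the polynomial blow-up $s_{*}^{p}$ up to a universal base. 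Taking $p$-th roots yields $h_{Z_{p}(K)}(\theta)\le c\,h_{K_{\delta}}(\theta)$ and hence $c_{1}=1/c$ works. Combining the two directions gives the stated double inclusion.
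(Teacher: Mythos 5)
The paper does not prove this statement; it quotes it verbatim from Paouris--Werner \cite{Paouris_Werner:2012}, so there is no in-paper proof to compare against. Assessing your argument on its own terms: the overall strategy (Markov's inequality for $K_{\delta}\sub c_{2}Z_{p}(K)$, Borell's lemma for the reverse inclusion, calibration $p=\log(e/(2\delta))$) is the standard and correct route. The Markov step is fine, provided you note that the identity $\delta=\bigl|\{x\in K:\iprod{x}{\theta}\ge h_{K_{\delta}}(\theta)\}\bigr|$ uses symmetry: in general one only has $h_{K_{\delta}}(\theta)\le d(\theta,\delta)$, and equality holds here because for symmetric $K$ the convex floating body coincides with Dupin's floating body, so the barycenter of the cutting section (a point of the hyperplane) lies on $\partial K_{\delta}$. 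The computation $\delta^{-1/p}=\exp\bigl(1-\tfrac{1-\log 2}{p}\bigr)<e$ does hold for all $p\ge 1$, so $c_{2}=e$ works.

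The genuine gap is in the Borell step. Borell's lemma requires the symmetric convex set $A$ to satisfy $|A\cap K|>\tfrac12$, which in your setup reads $1-2\delta>\tfrac12$, i.e.\ $\delta<\tfrac14$; equivalently, your $\beta=\tfrac12\log\tfrac{1-2\delta}{2\delta}$ is nonnegative only for $\delta\le\tfrac14$, and for $\delta\in(\tfrac14,\tfrac12)$ the quoted tail estimate gives no decay at all. So as written the lower inclusion is established only on $(0,\tfrac14)$, not on the stated range $(0,\tfrac12)$. One can repair this for $\delta$ bounded away from $\tfrac12$ by replacing $A$ with a universal dilate whose $K$-measure exceeds, say, $\tfrac34$ (a one-dimensional log-concavity quantile-comparison argument), but this is a real extra step. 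More seriously, for symmetric $K$ one has $K_{\delta}\to\{0\}$ while $Z_{\log(e/(2\delta))}(K)\to Z_{1}(K)$ (a fixed nondegenerate body) as $\delta\nearrow\tfrac12$, so the lower inclusion with a universal $c_{1}$ cannot hold on the entire interval $(0,\tfrac12)$; the quoted range needs to be double-checked against the original source. Finally, the ``saddle-point'' estimate for $p\int_{1}^{\infty}s^{p-1}\,2\delta\,e^{-\beta(s-1)}\d s$ is only gestured at; it does go through (with $u=2\delta=e^{-(p-1)}$ and $\beta=\tfrac12(p-1)+\tfrac12\log(1-u)$, the integrand $(1+v)^{p-1}e^{-\beta v}$ peaks near $v=1$ with value $\approx(2/\sqrt{e})^{\,p-1}$ and width $\sim p^{-1/2}$, so after the prefactor $pu$ the whole quantity is $O\bigl(p^{3/2}(2e^{-3/2})^{p-1}\bigr)$, uniformly bounded since $2e^{-3/2}<1$), but it must be carried out rather than invoked as a heuristic.
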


\noindent We obtain a similar result for Ulam floating bodies:
\begin{prop}
\label{prop:Relation_Zp}Let $K$ be a symmetric convex body in $\R^{n}$
of volume $1$ . Then there is an absolute constant $c_{1}>0$ such
that for all $\delta<\frac{1}{e}$
\[
c_{1}Z_{\log\left(\frac{e}{2\delta}\right)}\left(K\right)\sub K_{\delta}\sub\mfloat{\delta}K\sub eZ_{\log\left(\frac{1}{\delta}\right)}\left(K\right).
\]
\end{prop}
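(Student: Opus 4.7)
The chain splits into three inclusions, which I will handle separately. The leftmost, $c_{1}Z_{\log(e/(2\delta))}(K)\sub K_{\delta}$, is exactly the lower half of the already-cited Theorem~\ref{thm: ZpFloating}, so no new argument is required. The middle inclusion $K_{\delta}\sub\mfloat{\delta}K$ I would deduce from Theorem~\ref{thm:floating_metro_iso} applied with $\phi\equiv1$: that theorem gives $K_{(e-1)\delta/e}\sub\mfloat{\delta}K$, and since $K_{\delta'}$ is monotonically decreasing in $\delta'$ (a larger $\delta'$ legalises more half-spaces in the defining intersection of $K_{\delta'}$) and $(e-1)\delta/e\le\delta$, chaining yields $K_{\delta}\sub K_{(e-1)\delta/e}\sub\mfloat{\delta}K$.

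The only substantive claim is thus the upper inclusion $\mfloat{\delta}K\sub e\,Z_{\log(1/\delta)}(K)$, which I would establish by comparing support functions pointwise. Fix $\theta\in\Sph^{n-1}$ and let $g(t)$ denote the $(n-1)$-dimensional Lebesgue measure of the slice $K\cap\{x\,:\,\iprod x{\theta}=t\}$. Since $\left|K\right|=1$, $g$ is a probability density on $\R$; because $K$ is symmetric and $\delta<1/e<1/2$, the scalar $d=d(\theta,\delta)$ defined by $\int_{d}^{\infty}g(t)\d t=\delta$ as in \eqref{stetig} is strictly positive. Proposition~\ref{prop:ExtremePointOfMetronoid} together with Fubini give
\begin{equation*}
h_{\mfloat{\delta}K}(\theta)=\frac{1}{\delta}\int_{d}^{\infty}t\,g(t)\d t,\qquad h_{Z_{p}(K)}(\theta)^{p}=\int_{-\infty}^{\infty}|t|^{p}g(t)\d t.
\end{equation*}

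Applying H\"older's inequality on $[d,\infty)$ with conjugate exponents $p$ and $p/(p-1)$ then yields
\begin{equation*}
\int_{d}^{\infty}t\,g(t)\d t\;\le\;\left(\int_{d}^{\infty}t^{p}g(t)\d t\right)^{1/p}\left(\int_{d}^{\infty}g(t)\d t\right)^{1-1/p}\;\le\;h_{Z_{p}(K)}(\theta)\cdot\delta^{1-1/p},
\end{equation*}
which, after division by $\delta$, becomes $h_{\mfloat{\delta}K}(\theta)\le\delta^{-1/p}\,h_{Z_{p}(K)}(\theta)$. The main (and really only non-routine) step is then the choice $p=\log(1/\delta)$: this makes $\delta^{-1/p}=e$, while the hypothesis $\delta<1/e$ guarantees $p>1$ so that H\"older is valid. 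Taking the supremum over $\theta\in\Sph^{n-1}$ delivers the claimed inclusion $\mfloat{\delta}K\sub e\,Z_{\log(1/\delta)}(K)$.
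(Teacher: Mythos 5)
Your proposal is correct and follows essentially the same route as the paper. The first inclusion is cited identically; the third inclusion is the paper's H\"older argument, just transported to a one-dimensional integral via Fubini (the paper applies H\"older directly to $\int_{K\cap H^{+}(\theta,\delta)}\iprod{y}{\theta}\d y$ with exponents $p,q$ and then enlarges the domain of the $p$-norm factor from $K\cap H^{+}$ to $K$ --- the same inequality). For the middle inclusion the paper merely says it ``follows from Theorem~\ref{thm:floating_metro_iso}''; your explicit bridge --- $K_{\delta}\sub K_{(e-1)\delta/e}$ by the monotonicity of $\delta'\mapsto K_{\delta'}$, then $K_{(e-1)\delta/e}\sub\mfloat{\delta}K$ from the theorem --- is precisely the missing step and is the correct way to read that citation (the proof of Theorem~\ref{thm:floating_metro_iso} in fact establishes the stronger $F_{\delta}(K,\phi)\sub\mwfloat{\delta}K{\phi}$ en route, but the statement of the theorem alone needs your monotonicity remark). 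Your observation that $\delta<1/e$ is exactly what makes $p=\log(1/\delta)>1$, so that H\"older is applicable, is also correct and is left implicit in the paper.
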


\begin{proof}
The first inclusion holds by Theorem \ref{thm: ZpFloating}. The second
one, $K_{\delta}\sub\mfloat{\delta}K$, follows from Theorem \ref{thm:floating_metro_iso}.
By Hölder's inequality, we have for $p\in\left[1,\,\infty\right]$,
\begin{align*}
\int_{K\cap H^{+}\left(\theta,\,\delta\right)}\iprod y{\theta}\d y & \le\left(\int_{K\cap H^{+}\left(\theta,\,\delta\right)}1^{q}\d y\right)^{\frac{1}{q}}\left(\int_{K}\left|\iprod{\theta}y\right|^{p}\d y\right)^{\frac{1}{p}}\\
 & =\delta^{\frac{1}{q}}h_{Z_{p}\left(K\right)}\left(\theta\right),
\end{align*}
where $q$ satisfies $\frac{1}{p}+\frac{1}{q}=1$. Dividing both sides
by $\delta$, we get 
\[
h_{\mfloat{\delta}K}\left(\theta,\,\delta\right)\text{\ensuremath{\le}}\left(\frac{1}{\delta}\right)^{\frac{1}{p}}h_{Z_{p}\left(K\right)}\left(\theta\right).
\]
Putting $p=\log\left(\frac{1}{\delta}\right)$ yields 
\[
h_{\mfloat{\delta}K}\left(\theta,\,\delta\right)\le e\ h_{Z_{\log\left(\frac{1}{\delta}\right)}\left(K\right)}\left(\theta\right).
\]
Therefore, we have that
\[
\mfloat{\delta}K\sub e\ Z_{\log\left(\frac{1}{\delta}\right)}\left(K\right).
\]
\end{proof}

\subsection{Smoothness of Ulam floating bodies}

In this section we prove Theorem \ref{thm:Smoothness }. To this end,
let $\rho_{v}\left(\cdot\right)$ denote the radial function of $K$
with center $v$. That is, 
\[
\rho_{v}\left(\theta\right)=\max\left\{ r\in\R^{+}\,:\,v+r\theta\in K\right\} .
\]
We will need the following fact, which can be found implicitly in
e.g., \cite{Schneider:1993}.
\begin{fact}
\label{fact:smooth}Let $K\sub\R^{n}$ be a convex body. Then, the
following are equivalent:
\end{fact}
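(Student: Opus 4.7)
The equivalence is almost certainly of the form: (i) $\partial K\in C^{k}$; (ii) for some $v\in{\rm int}(K)$, the radial function $\rho_{v}\in C^{k}(\Sph^{n-1})$; (iii) for every $v\in{\rm int}(K)$, $\rho_{v}\in C^{k}(\Sph^{n-1})$. The plan is to prove the chain (iii) $\Rightarrow$ (ii) $\Rightarrow$ (i) $\Rightarrow$ (iii), the first implication being trivial.

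First I would handle (ii) $\Rightarrow$ (i) by the direct parametrization argument. Fix $v$ for which $\rho_{v}\in C^{k}$ and consider the map $\Phi_{v}:\Sph^{n-1}\to\partial K$ defined by $\Phi_{v}(\theta)=v+\rho_{v}(\theta)\theta$. Since $K$ is convex with $v\in{\rm int}(K)$, every ray from $v$ meets $\partial K$ in a single point, so $\Phi_{v}$ is a continuous bijection, hence (by compactness) a homeomorphism. When $k\geq 1$, I would compute the differential: for a tangent vector $\xi\in T_{\theta}\Sph^{n-1}$,
\[
d\Phi_{v}(\theta)[\xi]=\rho_{v}(\theta)\,\xi+(d\rho_{v}(\theta)[\xi])\,\theta.
\]
The two terms are linearly independent (one tangent, one radial) as long as $\rho_{v}(\theta)>0$, so $d\Phi_{v}$ has maximal rank $n-1$. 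Hence $\Phi_{v}$ is a $C^{k}$ embedding and $\partial K$ carries a $C^{k}$ structure. For $k=0$ continuity of $\Phi_{v}$ itself suffices since $\partial K\in C^{0}$ just means that $\partial K$ is a topological manifold, which is automatic for a convex body.

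For (i) $\Rightarrow$ (iii) I would fix an arbitrary $v\in{\rm int}(K)$ and a direction $\theta_{0}\in\Sph^{n-1}$ with corresponding boundary point $x_{0}=v+\rho_{v}(\theta_{0})\theta_{0}$. In a neighborhood of $x_{0}$, $\partial K$ is locally the graph of a $C^{k}$ function $\psi$ over the tangent hyperplane at $x_{0}$, so it is the zero set of a $C^{k}$ function $F:U\to\R$ (defined on an open neighborhood $U\ni x_{0}$) with $\nabla F(x_{0})=N_{K}(x_{0})\neq 0$. Now consider the auxiliary function $G(r,\theta):=F(v+r\theta)$ defined for $\theta$ near $\theta_{0}$ and $r$ near $\rho_{v}(\theta_{0})$. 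We have
\[
\partial_{r}G(r,\theta)\big|_{r=\rho_{v}(\theta_{0}),\,\theta=\theta_{0}}=\langle\nabla F(x_{0}),\theta_{0}\rangle=\langle N_{K}(x_{0}),\theta_{0}\rangle>0,
\]
where positivity comes from transversality: the outward normal $N_{K}(x_{0})$ at the boundary point reached by the outgoing ray from $v$ cannot be orthogonal to $\theta_{0}$, since otherwise the tangent hyperplane at $x_{0}$ would contain the segment $[v,x_{0}]$, contradicting $v\in{\rm int}(K)$. The implicit function theorem then yields $\rho_{v}$ as a $C^{k}$ function of $\theta$ near $\theta_{0}$; since $\theta_{0}$ was arbitrary, $\rho_{v}\in C^{k}(\Sph^{n-1})$.

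The main technical point to watch is the case $k=0$: there the implicit function theorem is replaced by the observation that the intersection of a continuously varying ray with a continuously varying boundary depends continuously on parameters, which follows from the convexity-based uniqueness of the intersection point and a standard compactness argument. I also need to be careful that all constructions above used only $v\in{\rm int}(K)$ and not any special choice, so they yield the ``for all $v$'' statement at once. With these ingredients assembled, the three conditions are equivalent.
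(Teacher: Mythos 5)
Your reconstruction is essentially right, and it is worth noting that the paper itself offers no proof of this Fact at all -- it is dispatched with a citation to Schneider's book -- so your argument is genuinely supplying the missing details rather than paralleling an existing one. The cycle (iii) $\Rightarrow$ (ii) $\Rightarrow$ (i) $\Rightarrow$ (iii), the embedding $\theta\mapsto v+\rho_{v}(\theta)\theta$ with its injective differential, and the implicit-function-theorem step with the transversality observation $\langle N_{K}(x_{0}),\theta_{0}\rangle\neq0$ (forced because $v\in{\rm int}(K)$ cannot lie on the supporting hyperplane at $x_{0}$) are all correct; note only that transversality by itself gives non-vanishing rather than positivity, which is all the implicit function theorem needs anyway, and that the sign follows from $\langle v,N_{K}(x_{0})\rangle<h_{K}(N_{K}(x_{0}))$.

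The one substantive shortfall is in your condition (iii). The paper's second condition asserts that the map $(v,\theta)\mapsto\rho_{v}(\theta)$ is $C^{k}$ \emph{jointly} in both variables on ${\rm int}(K)\times\Sph^{n-1}$, and this joint regularity is exactly what gets used later (in the proof of Theorem 1.2, smoothness of $F_{a_{0}}(v,T)$ requires $\rho_{v}^{n-1}(T\theta)$ to depend $C^{k}$ on the pair, not merely on $\theta$ for each frozen $v$). Your statement ``for every $v$, $\rho_{v}\in C^{k}(\Sph^{n-1})$'' is strictly weaker. Fortunately the repair is already inside your own argument: apply the implicit function theorem to $G(r,v,\theta)=F(v+r\theta)$ treating $v$ as an additional parameter; since $\partial_{r}G=\langle\nabla F(v+r\theta),\theta\rangle\neq0$ at the solution, the theorem returns $r=\rho_{v}(\theta)$ as a $C^{k}$ function of $(v,\theta)$ jointly, which is the form of the statement the paper actually needs. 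With that adjustment the proof is complete.
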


\begin{enumerate}
\item $K$ has $C^{k}$ boundary;
\item The function $\left(v,\,\theta\right)\rightarrow\rho_{v}\left(\theta\right)$
is $C^{k}$ for every $v\in{\rm int}\left(K\right)$ and $\theta\in\Sph^{n-1}$;
\item There exists $v\in{\rm int}\left(K\right)$ such that $\theta\rightarrow\rho_{v}\left(\theta\right)$
is $C^{k}$.
\end{enumerate}
\begin{proof}[Proof of Theorem \ref{thm:Smoothness }]
For $a\in\R^{n}\setminus\left\{ 0\right\} $, let $H:=\left\{ x\,:\,\iprod xa=1\right\} $,
$\delta\left(a\right)=\left|K\cap\left\{ \iprod xa\ge1\right\} \right|$,
and $U\left(a\right):=\int_{K\cap\left\{ \iprod xa\ge1\right\} }x\d x$.
We would like to show that 
\begin{align}
\nabla\delta\left(a\right) & =\frac{1}{\norm a}\int_{K\cap H}x\d x\label{eq:Ddelta}\\
DU & =\frac{1}{\norm a}\left(\int_{K\cap\left\{ \iprod xa=1\right\} }x_{i}x_{j}\d x\right)_{i,j\in\left[n\right]}.\label{eq:DU}
\end{align}
Equation \eqref{eq:Ddelta} was proved in \cite[Lemma 5]{Meyer1989}.
Using the same ideas, we prove \eqref{eq:DU} as follows. Pick a direction
$\theta$ so that $\theta$ is not parallel to $a$, and let $H_{\eps}:=\left\{ x\,:\,\iprod x{a+\eps\theta}=1\right\} .$
As illustrated in Figure \ref{fig:Kpm}, we also define: 
\begin{align*}
K_{-}\left(\eps\right) & ={\rm int}\left(K\right)\cap\left\{ y\in\R^{n}\,:\,\iprod ya\ge1,\,\iprod y{a+\eps\theta}\le1\right\} ,\\
K_{+}\left(\eps\right) & ={\rm int}\left(K\right)\cap\left\{ y\in\R^{n}\,:\,\iprod ya\le1,\,\iprod y{a+\eps\theta}\ge1\right\} .
\end{align*}
\begin{figure}[h]
\begin{centering}
\includegraphics[scale=0.8]{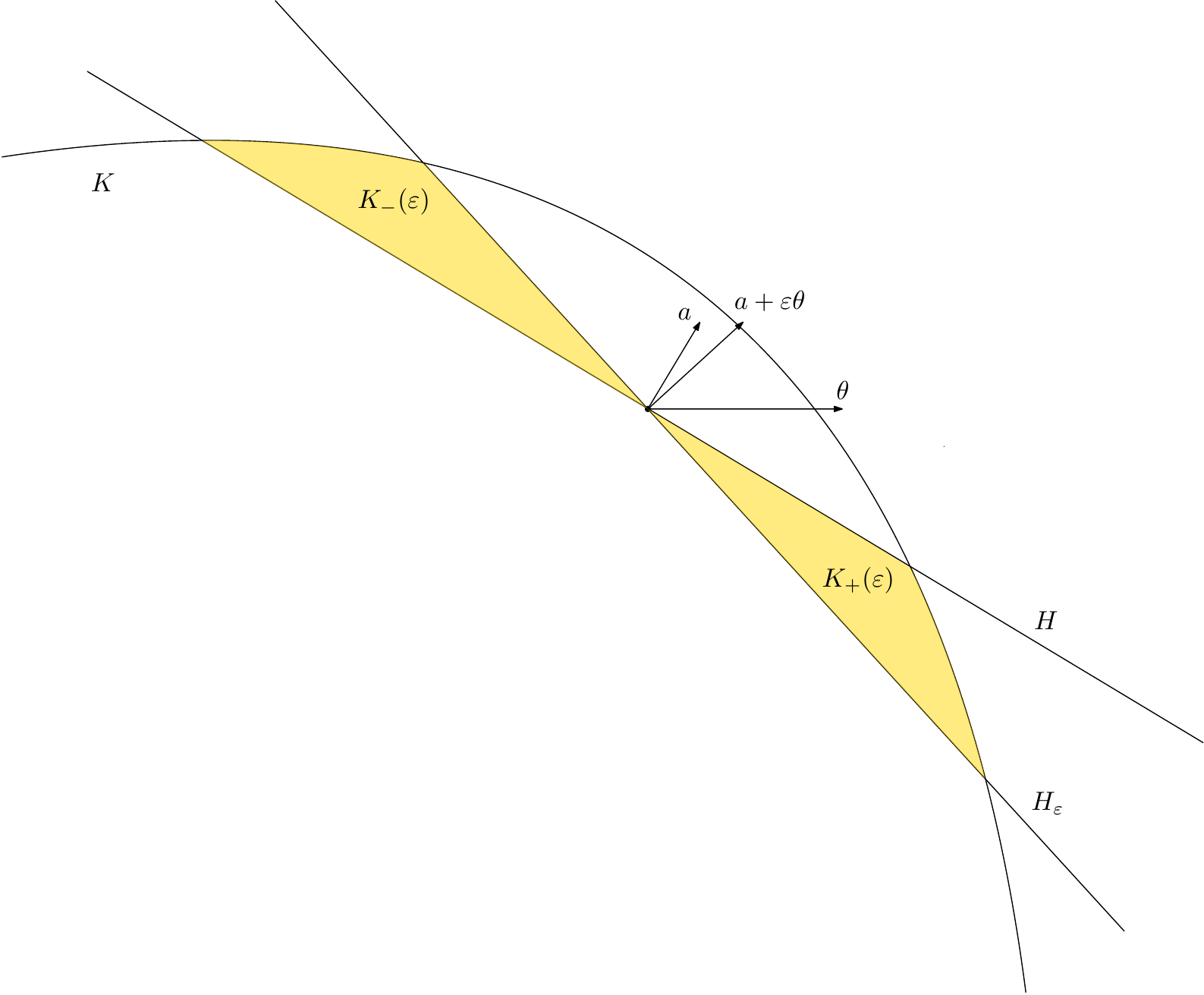}
\par\end{centering}
\centering{}\caption{\label{fig:Kpm}}
\end{figure}

Let $U_{j}$ denote the $j$th coordinate of $U$. We have 
\[
U_{j}\left(a+\eps\theta\right)-U_{j}\left(a\right)=\int_{K_{+}\left(\eps\right)}\iprod x{e_{j}}\d x-\int_{K_{-}\left(\eps\right)}\iprod x{e_{j}}\d x.
\]

From now on we choose $\eps>0$ small enough so that $\iprod a{a+\eps\theta}>0$.
For $y\in\R^{n}$, we write $y$ uniquely in the form $x+t\frac{a}{\norm a}$,
where $x=y+\frac{1-\iprod ya}{\iprod aa}a$ and $t=-\frac{1-\iprod ya}{\iprod aa}\norm a$.
Notice that $x\in H$. Then, 

\begin{align*}
 & \left\{ y\in\R^{n}\,:\,\iprod ya\ge1,\,\iprod y{a+\eps\theta}\le1\right\} =\\
 & \left\{ x+ta\,:x\in H,\,t\in\R,\,\iprod{x+t\frac{a}{\norm a}}a\ge1,\,\iprod{x+t\frac{a}{\norm a}}{a+\eps\theta}\le1\right\} =\\
 & \left\{ x+ta\,:x\in H,\,0\le t\le\frac{-\eps\iprod x{\theta}\norm a}{\iprod a{a+\eps\theta}}\right\} =\\
 & \left\{ x+ta\,:x\in H,\,\iprod x{\theta}\le0,\,0\le t\le\frac{-\eps\iprod x{\theta}\norm a}{\iprod a{a+\eps\theta}}\right\} .
\end{align*}
Thus, 
\[
K_{-}\left(\eps\right)=\left\{ x+ta\,:x\in H,\,\iprod x{\theta}\le0,\,0\le t\le\frac{-\eps\iprod x{\theta}\norm a}{\iprod a{a+\eps\theta}}\right\} \cap{\rm int}\left(K\right).
\]

Let 
\begin{align*}
O_{-}\left(\eps\right): & =\left\{ x\in H\,:\,\iprod x{\theta}\le0,\,\left[x,\,x+\frac{-\eps\iprod x{\theta}\norm a}{\iprod a{a+\eps\theta}}a\right]\cap{\rm int}\left(K\right)\neq\emptyset\right\} .
\end{align*}
For $x\in H$ such that $\iprod x{\theta}\le0$, we have that 
\[
\frac{-\eps\iprod x{\theta}\norm a}{\iprod a{a+\eps\theta}}=\frac{\eps\left|\iprod x{\theta}\right|\norm a}{\iprod a{a+\eps\theta}}=\frac{\left|\iprod x{\theta}\right|\norm a}{\iprod aa\eps^{-1}+\iprod a{\theta}}
\]
decrease to $0$ as $\eps\searrow0$. Thus,  $O\left(\eps\right)$
shrinks to 
\begin{align*}
O_{-}\left(0\right) & =\left\{ x\in H\,:\,\iprod x{\theta}\le0,\,\left[x,\,x\right]\cap{\rm int}\left(K\right)\neq\emptyset\right\} \\
 & =\left\{ x\in H\cap{\rm int}\left(K\right)\,:\,\iprod x{\theta}\le0\right\} .
\end{align*}

For $x\in O_{-}\left(\eps\right)$, let $0\le t_{1}\left(\eps,\,x\right)\le t_{2}\left(\eps,\,x\right)\le\frac{-\eps\iprod x{\theta}}{\iprod a{a+\eps\theta}}\norm a$
be defined such that 
\[
\left\{ x+ta\,:\,0\le t\le\frac{-\eps\iprod x{\theta}\norm a}{\iprod a{a+\eps\theta}}\right\} \cap{\rm int}\left(K\right)=\left\{ x+ta\,:\,t_{1}\left(\eps,\,x\right)<t<t_{2}\left(\eps,\,x\right)\right\} .
\]
Then, by Fubini's theorem, we have 
\begin{align*}
\int_{K_{-}\left(\eps\right)}\iprod y{e_{j}}\d y= & \int_{O_{-}\left(\eps\right)}\int_{t_{1}\left(\eps,\,x\right)}^{t_{2}\left(\eps,\,x\right)}\iprod{x+t\frac{a}{\norm a}}{e_{j}}\d t\d x\\
= & \int_{O_{-}\left(\eps\right)}\int_{t_{1}\left(\eps,\,x\right)}^{t_{2}\left(\eps,\,x\right)}\iprod x{e_{j}}\d t\d x+\int_{O_{-}\left(\eps\right)}\int_{t_{1}\left(\eps,\,x\right)}^{t_{2}\left(\eps,\,x\right)}\iprod{t\frac{a}{\norm a}}{e_{j}}\d t\d x.
\end{align*}
We analyze each of the above terms, separately, as follows.

Firstly, we have that 
\begin{align*}
\left|\int_{O_{-}\left(\eps\right)}\int_{t_{1}\left(\eps,\,x\right)}^{t_{2}\left(\eps,\,x\right)}\iprod{t\frac{a}{\norm a}}{e_{j}}\d t\d x\right|\le & \int_{O_{-}\left(\eps\right)}\int_{t_{1}\left(\eps,\,x\right)}^{t_{2}\left(\eps,\,x\right)}t\d t\d x\\
\le & \int_{O_{-}\left(\eps\right)}\int_{0}^{\frac{-\eps\iprod x{\theta}\norm a}{\iprod a{a+\eps\theta}}}t\d t\d x\\
\le & \frac{1}{2}\frac{\eps^{2}\norm a^{2}}{\iprod a{a+\eps\theta}^{2}}\int_{O_{-}\left(\eps\right)}\iprod x{\theta}^{2}\d x.
\end{align*}
Since $O_{-}\left(\eps\right)$ is bounded and shrinks as $\eps$
decreases, we conclude that
\[
\lim_{\eps\searrow0}\frac{1}{\eps}\int_{O_{-}\left(\eps\right)}\int_{t_{1}\left(\eps,\,x\right)}^{t_{2}\left(\eps,\,x\right)}\iprod{t\frac{a}{\norm a}}{e_{j}}\d t\d x=0.
\]
Secondly, we have that 
\[
\frac{\int_{O_{-}\left(\eps\right)}\int_{t_{1}\left(\eps,\,x\right)}^{t_{2}\left(\eps,\,x\right)}\iprod x{e_{j}}\d t\d x}{\eps}=\int_{H}\frac{\left(t_{2}\left(x,\,\eps\right)-t_{1}\left(x,\,\eps\right)\right)\iprod x{e_{j}}{\bf 1}_{O_{-}\left(\eps\right)}\left(x\right)}{\eps}\d x.
\]
Fix $\eps_{0}>0$. For $\eps_{0}>\eps>0$, we have that 
\[
\left|\frac{\left(t_{2}\left(x,\,\eps\right)-t_{1}\left(x,\,\eps\right)\right)\iprod x{e_{j}}{\bf 1}_{O_{-}\left(\eps\right)}\left(x\right)}{\eps}\right|\le\frac{\left|\iprod x{\theta}\right|\norm a}{\iprod aa-\eps_{0}\left|\iprod a{\theta}\right|}\left|\iprod x{e_{j}}\right|{\bf 1}_{O_{-}\left(\eps_{0}\right)},
\]
where the function on the right hand side is integrable. 

Suppose $x\notin O_{-}\left(0\right)$. Then, $\frac{\left(t_{2}\left(x,\,\eps\right)-t_{1}\left(x,\,\eps\right)\right)\iprod x{e_{j}}{\bf 1}_{O_{-}\left(\eps\right)}\left(x\right)}{\eps}\rightarrow0$
as $\eps\searrow0$ since ${\bf 1}_{O_{-}\left(\eps\right)}\left(x\right)=0$
for small $\eps>0$. For $x\in O_{-}\left(0\right)$, we have $t_{1}\left(x\right)=0$
and $t_{2}\left(x\right)=\frac{-\eps\iprod x{\theta}\norm a}{\iprod a{a+\eps\theta}}$
for sufficiently small $\eps$. We conclude that, as $\eps\searrow0$,
\[
\frac{\left(t_{2}\left(x,\,\eps\right)-t_{1}\left(x,\,\eps\right)\right)\iprod x{e_{j}}{\bf 1}_{O_{-}\left(\eps\right)}\left(x\right)}{\eps}\rightarrow\frac{-\iprod x{\theta}\iprod x{e_{j}}}{\norm a}{\bf 1}_{O_{-}\left(0\right)}\left(x\right).
\]
By Lebesgue's dominated convergence theorem, we have 
\begin{align*}
 & \lim_{\eps\searrow0}-\frac{\int_{K_{-}\left(\eps\right)}\iprod x{e_{j}}\d x}{\eps}\\
= & \lim_{\eps\searrow0}-\frac{\int_{O_{-}\left(\eps\right)}\int_{t_{1}\left(\eps,\,x\right)}^{t_{2}\left(\eps,\,x\right)}\iprod x{e_{j}}\d t\d x}{\eps}\\
= & \frac{1}{\norm a}\int_{K\cap H\cap\left\{ \iprod x{\theta}\le0\right\} }\iprod x{\theta}\iprod x{e_{j}}\d x.
\end{align*}
Via the same argument, one also shows that 
\[
\lim_{\eps\searrow0}\frac{\int_{K_{+}\left(\eps\right)}\iprod x{e_{j}}\d x}{\eps}=\frac{1}{\norm a}\int_{K\cap H\cap\left\{ \iprod x{\theta}\ge0\right\} }\iprod x{\theta}\iprod x{e_{j}}\d x.
\]
Thus we conclude that 

\[
\lim_{\eps\searrow0}\frac{U_{j}\left(a+\eps\theta\right)-U_{j}\left(a\right)}{\eps}=\frac{1}{\norm a}\int_{K\cap H}\iprod x{\theta}\iprod x{e_{j}}\d x.
\]
This completes the proof of \eqref{eq:DU}. 

Next, we show that $DU\left(a\right)$ and $\nabla\delta\left(a\right)$
are $C^{k}$ functions.

Pick $v\in{\rm int}\left(K\right)\cap H$. Let $\sigma_{a}$ be the
normalized Haar measure on $S\left(a\right)=\text{\ensuremath{\Sph}}^{n-1}\cap a^{\perp}$.
Then 
\begin{align}
\int_{K\cap H}x\d x & =\left(n-1\right)\left|B_{2}^{n-1}\right|\int_{S\left(a\right)}\int_{0}^{\rho_{v}\left(\theta\right)}r^{n-2}\left(v+r\theta\right)\d r\d{\sigma_{a}\left(\theta\right)}\nonumber \\
 & =\left|B_{2}^{n-1}\right|\int_{S\left(a\right)}\left(\rho_{v}^{n-1}\left(\theta\right)v+\frac{n-1}{n}\rho_{v}^{n}\left(\theta\right)\theta\right)\d{\sigma_{a}\left(\theta\right).}\label{eq:GradientDelta}
\end{align}

Fix $a_{0}\in\R^{n}$ so that ${\rm int}\text{\ensuremath{\left(K\right)}}\cap\text{\ensuremath{\left\{ \iprod x{a_{0}}=1\right\} } \ensuremath{\neq}\ensuremath{\emptyset}}$
and let $v_{0}\in{\rm int}\left(K\right)\cap\left\{ \iprod x{a_{0}}=1\right\} $.
By Fact \ref{fact:smooth}, $\left(v,\,\theta\right)\rightarrow\rho_{v}\left(\theta\right)$
is $C^{k}$, and hence the function $F_{a_{0}}:\R^{n}\times O_{n}\to\R^{n}$
defined by 
\[
\left(v,\,T\right)\mapsto\left|B_{2}^{n-1}\right|\int_{S\left(a_{0}\right)}\left(\rho_{v}^{n-1}\left(T\theta\right)v+\frac{n-1}{n}\rho_{v}^{n}\left(T\theta\right)T\theta\right)\d{\sigma_{a_{0}}\left(\theta\right)}
\]
is also $C^{k}$. We can find a smooth function $a\mapsto\left(v\left(a\right),\,T\left(a\right)\right)$
in a neighborhood of $a_{0}$ so that $v\left(a\right)\in{\rm int}\left(K\right)\cap\left\{ \iprod xa=1\right\} $
and $T\left(a\right)S\left(a_{0}\right)=\Sph^{n-1}\cap a^{\perp}$.
Indeed, for $a$ close to $a_{0}$, we define the unique two-dimensional
rotation $T\left(a\right)$ satisfying $T\left(a\right)\frac{a_{0}}{\norm{a_{0}}}=\frac{a}{\norm a}$
and $T\left(a\right)v=v$ for all $v\in{\rm span}\left(a,\,a_{0}\right)^{\perp}$.
In particular, $a\mapsto T\left(a\right)$ is a smooth function around
$a_{0}$. Also, $T\left(a\right)\left(S\left(a_{0}\right)\right)=S\left(a\right)$.
Let $v\left(a\right)$ be the projection of $v_{0}$ onto $\left\{ \iprod xa=1\right\} $.
In other words, 
\[
v\left(a\right):=v_{0}-\iprod{v_{0}}{\frac{a}{\norm a}}\frac{a}{\norm a}+\frac{a}{\norm a^{2}},
\]
which is again smooth when $a\neq0$. Also, $v\left(a_{0}\right)=v_{0}$,
and $v\left(a\right)\in{\rm int}\left(K\right)$ if $a$ is close
to $a_{0}$.

Next, we express $\nabla\delta$ in terms of $v\left(a\right)$ and
$T\left(a\right)$: By \eqref{eq:GradientDelta} we have 
\begin{align*}
\nabla\delta\left(a\right) & =\int_{K\cap\left\{ \iprod xa=1\right\} }x\d x\\
 & =\frac{1}{\norm a}\left|B_{2}^{n-1}\right|\int_{S\left(a\right)}\left(\rho_{v\left(a\right)}^{n-1}\left(\theta\right)v\left(a\right)+\frac{n-1}{n}\rho_{v\left(a\right)}^{n}\left(\theta\right)\theta\right)\d{\sigma_{a}\left(\theta\right)}\\
 & =\frac{1}{\norm a}\left|B_{2}^{n-1}\right|\int_{S\left(a_{0}\right)}\left(\rho_{v\left(a\right)}^{n-1}\left(T\left(a\right)\theta\right)v\left(a\right)+\frac{n-1}{n}\rho_{v\left(a\right)}^{n}\left(T\left(a\right)\theta\right)T\left(a\right)\theta\right)\d{\sigma_{a_{0}}\left(\theta\right)}\\
 & =\frac{1}{\norm a}F_{a_{0}}\left(v\left(a\right),\,T\left(a\right)\right).
\end{align*}
We conclude that $\nabla\delta\left(a\right)$ is $C^{k}$ and thus
$\delta\left(a\right)$ is $C^{k+1}$.

Recall that $\delta\text{\ensuremath{\left(\theta,\,d\right)}}=\left|K\cap\left\{ \iprod x{\theta}\ge d\right\} \right|$.
Consider the function from $\Sph^{n-1}\times\R$ to $\Sph^{n-1}\times\R$
defined by 
\[
\left(\theta,\,d\right)\mapsto\left(\theta,\,\delta\left(\frac{1}{d}\theta\right)\right)=\left(\theta,\,\delta\left(\theta,\,d\right)\right).
\]
By the above, it is $C^{k+1}$ whenever ${\rm int}\left(K\right)\cap\left\{ \iprod x{\theta}=d\right\} \neq\emptyset$.
Thus, its inverse function $\left(\theta,\,d\left(\theta,\,\delta\right)\right)$
is also $C^{k+1}$ for $\left(\theta,\,\delta\right)\in\Sph^{n-1}\times\left[0,\,\left|K\right|\right]$.
Repeating the same argument as for $\nabla\delta\left(a\right)$ implies
that $U\left(a\right)$ is also $C^{k+1}$.

Recall that if $d\left(\theta,\,\delta\right)>0$, 
\[
x_{K}\left(\theta,\,\delta\right)=\frac{1}{\delta}\int_{K\cap\left\{ \iprod x{\theta}\ge d\left(\theta,\,\delta\right)\right\} }x\d x=\frac{1}{\delta}U\left(\frac{\theta}{d\left(\theta,\,\delta\right)}\right).
\]
Therefore, for a fixed $0<\delta<\left|K\right|$, and $\theta$ such
that $d\left(\theta,\,\delta\right)>0$, the function $\theta\mapsto\frac{x_{K}\left(\theta,\,\delta\right)}{\norm{x_{K}\left(\theta,\,\delta\right)}}$
is $C^{k+1}$. Moreover, it is invertible since $\mfloat{\delta}K$
is strictly convex. Thus its inverse, denoted by $G_{\delta}:\Sph^{n-1}\to\Sph^{n-1}$
is also $C^{k+1}$. Therefore, the radial function of $\mfloat{\delta}K$,
which is given by $\rho\left(\theta\right)=\norm{x\left(G_{\delta}\left(\theta\right),\,\delta\right)}$
is also $C^{k+1}$.

Finally, we need to show that $\theta\rightarrow x_{K}\left(\theta,\,\delta\right)$
is $C^{k+1}$ whenever $d\left(\theta,\,\delta\right)\le0$. Indeed,
we may choose some vector $v\in\R^{n}$ and consider $M_{\delta}\left(v+K\right)$.
Then, $x_{K}\left(\theta,\,\delta\right)=x_{v+K}\left(\theta,\,\delta\right)-v$.
Clearly, we can always choose $v$ such that, for $v+K$, $d\left(\theta,\,\delta\right)>0$.
Thus, following the same argument, we can show $x_{v+K}\left(\theta,\,\delta\right)$
is $C^{k+1}$. As a consequence, $x_{K}\left(\theta,\,\delta\right)$
is $C^{k+1}$. Therefore, we conclude that $\rho\left(\theta\right)$
is $C^{k+1}$ on $\Sph^{n-1}$. By Fact \eqref{fact:smooth}, the
boundary of $\mfloat{\delta}K$ is $C^{k+1}$.
\end{proof}

\section{\label{sec:ASA}Relation to p-affine surface area}

This section is devoted to the proof of Theorem \ref{thm:main}. 

\subsection{Preliminary results }

For the proof of Theorem \ref{thm:main}, we will need a few preliminary
results.\\

First, we focus on $\text{\ensuremath{\mfloat{\delta}{\rho B_{2}^{n},\phi}}}$,
where $\rho B_{2}^{n}$ is the Euclidean ball centered at $0$ and
with radius, and $\phi\left(x\right)$ is a constant function. By
symmetry, we know that $\text{\ensuremath{\mfloat{\delta}{\rho B_{2}^{n},\phi}}}$
is again a Euclidean ball with the same center. Let $\Delta\left(\rho,\,\delta\right)$
be the difference of the radius of $\rho B_{2}^{n}$ and $\text{\ensuremath{\mfloat{\delta}{\rho B_{2}^{n},\phi}}}$.
If $\phi:\rho B_{2}^{n}\rightarrow\left(0,\infty\right)$, is a constant
function, $\phi(x)=s$, for all $x\in\rho B_{2}^{n}$, then, we define
$\Delta\left(\rho,\,\delta,\,s\right)$ to be the difference of radius
of $\rho B_{2}^{n}$ and $\text{\ensuremath{\mfloat{\delta}{\rho B_{2}^{n},s}}}$.
One easily verifies that 
\begin{align}
\Delta\left(\rho,\,\delta,\,s\right) & =\Delta\left(\rho,\,\frac{\delta}{s}\right).\label{eq:DeltaEquivalence}
\end{align}

\begin{prop}
\label{prop:floatingball}$\lim_{\delta\searrow0}\Delta\left(\rho,\,\delta\right)/\delta^{\frac{2}{n+1}}\rho^{\frac{n+1}{n-1}}=c_{n}$,
where $c_{n}=\frac{1}{2}\frac{n+1}{n+3}\left(\frac{n+1}{\left|B_{2}^{n-1}\right|}\right)^{\frac{2}{n+1}}$. 
\end{prop}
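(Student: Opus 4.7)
The starting observation is that $\mfloat{\delta}{\rho B_{2}^{n}}$ is itself a Euclidean ball centered at the origin: by the uniqueness in Proposition \ref{prop:ExtremePointOfMetronoid} together with the rotational invariance of both $\rho B_{2}^{n}$ and the construction $M_{\delta}$, the body must be invariant under every orthogonal transformation. Hence its radius equals its support function evaluated in any fixed direction, which I will take to be $e_{n}$. Writing $C_{h}:=\rho B_{2}^{n}\cap\{x_{n}\ge\rho-h\}$ for the cap of height $h$, and choosing $h=h(\rho,\delta)$ so that $|C_{h}|=\delta$, Proposition \ref{prop:ExtremePointOfMetronoid} gives
\[
\rho-\Delta(\rho,\delta)\;=\;\frac{1}{\delta}\int_{C_{h}}x_{n}\d x.
\]
So the proof reduces to an asymptotic analysis of two one-dimensional integrals as $h\searrow0$.

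The plan is first to slice $C_{h}$ by hyperplanes $\{x_{n}=s\}$ to write
\[
\delta\;=\;|B_{2}^{n-1}|\int_{\rho-h}^{\rho}(\rho^{2}-s^{2})^{(n-1)/2}\d s,\qquad\int_{C_{h}}x_{n}\d x\;=\;|B_{2}^{n-1}|\int_{\rho-h}^{\rho}s\,(\rho^{2}-s^{2})^{(n-1)/2}\d s.
\]
Substituting $s=\rho-u$ and using the expansion $(u(2\rho-u))^{(n-1)/2}=(2\rho u)^{(n-1)/2}(1+O(u/\rho))$, valid uniformly for $u\in[0,h]$ when $h\ll\rho$, I would extract
\[
\delta\;=\;\frac{2|B_{2}^{n-1}|(2\rho)^{(n-1)/2}}{n+1}\,h^{(n+1)/2}\bigl(1+O(h/\rho)\bigr),
\]
and an analogous expansion for the first moment. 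Taking the ratio and collecting the next order then yields
\[
\frac{1}{\delta}\int_{C_{h}}x_{n}\d x\;=\;\rho-\frac{n+1}{n+3}\,h+O(h^{2}/\rho),
\]
so that $\Delta(\rho,\delta)=\tfrac{n+1}{n+3}\,h+O(h^{2}/\rho)$.

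Finally I would invert the first relation to obtain
\[
h\;=\;\left(\frac{(n+1)\delta}{2|B_{2}^{n-1}|(2\rho)^{(n-1)/2}}\right)^{2/(n+1)}\bigl(1+o(1)\bigr),
\]
substitute into $\Delta$, and simplify the powers of $2$: the factor $2^{-2/(n+1)}$ coming from the first term combines with $2^{-(n-1)/(n+1)}$ from $(2\rho)^{-(n-1)/(n+1)}$ to produce exactly $\tfrac{1}{2}$. This gives
\[
\Delta(\rho,\delta)\;=\;\frac{1}{2}\,\frac{n+1}{n+3}\left(\frac{n+1}{|B_{2}^{n-1}|}\right)^{\!2/(n+1)}\rho^{-(n-1)/(n+1)}\,\delta^{2/(n+1)}\bigl(1+o(1)\bigr),
\]
which is the claim. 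The only delicate point is bookkeeping: the error terms in the expansions of $\delta$ and of the moment must both be carried one order beyond the leading behavior, because the barycenter formula involves the ratio of the moment to $\delta$ and then subtracts $\rho$, so without the sub-leading term the limit would appear indeterminate. There is no real conceptual obstacle, only the careful extraction of the two asymptotic constants $\tfrac{2}{n+1}$ and $\tfrac{2}{n+3}$ whose quotient produces the factor $\tfrac{n+1}{n+3}$ in $c_{n}$.
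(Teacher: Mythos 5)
Your proposal is correct and follows essentially the same path as the paper: reduce to the barycenter of a spherical cap, slice to obtain one-dimensional integrals in the cap height $h$, extract $h\sim\bigl(\tfrac{(n+1)\delta}{2|B_2^{n-1}|(2\rho)^{(n-1)/2}}\bigr)^{2/(n+1)}$ and $\Delta\sim\tfrac{n+1}{n+3}h$, and combine. The only cosmetic difference is that the paper computes $\lim_{h\to0}h/\Delta=\tfrac{n+3}{n+1}$ by two applications of L'H\^opital applied to the exact ratio $\Delta=\int_0^h tg^{n-1}\,{\rm d}t\big/\int_0^h g^{n-1}\,{\rm d}t$ (in which the $\rho$ has already cancelled, so sub-leading terms are not strictly necessary), whereas you work with Taylor expansions of $\delta$ and the first moment separately and then take the ratio; both routes yield the same constant, and your remark that the $2^{-2/(n+1)}$ and $2^{-(n-1)/(n+1)}$ factors combine to $\tfrac12$ is the right bookkeeping.
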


\begin{proof}
We denote $h\left(\rho,\,\delta\right)$ to be height of the cap of
$\rho B_{2}^{n}$ which has volume $\delta$. To be specific, $h\left(\rho,\,\delta\right)$
satisfies the equality 
\[
\delta=\left|B_{2}^{n-1}\right|\int_{0}^{h\left(\rho,\delta\right)}g^{n-1}\left(t\right)\d t,
\]
where $g\left(t\right)=\left(\rho^{2}-\left(\rho-t\right)^{2}\right)^{1/2}$.
Moreover, 
\begin{align*}
g\left(t\right) & =\left(\rho^{2}-\left(\rho-t\right)^{2}\right)^{1/2}=\rho\left(1-\left(1-t/\rho\right)^{2}\right)^{1/2}=\rho\left(2-t/\rho\right)^{1/2}\left(t/\rho\right)^{1/2}.
\end{align*}
We have 
\begin{align*}
\delta & =\left|B_{2}^{n-1}\right|\rho^{n-1}\int_{0}^{h\left(\rho,\,\delta\right)}\left(2-t/\rho\right)^{\frac{n-1}{2}}\left(t/\rho\right)^{\frac{n-1}{2}}\d t.
\end{align*}
Thus, we have the inequality 
\begin{eqnarray*}
 &  & \hskip-15mm\left|B_{2}^{n-1}\right|\rho^{n-1}\left(2-h\left(\rho,\,\delta\right)/\rho\right)^{\frac{n-1}{2}}\int_{0}^{h\left(\rho,\,\delta\right)}\left(t/\rho\right)^{\frac{n-1}{2}}\d t\le\ \delta\\
 &  & \hskip30mm\le\ \left|B_{2}^{n-1}\right|\rho^{n-1}2^{\frac{n-1}{2}}\int_{0}^{h\left(\rho,\,\delta\right)}\left(t/\rho\right)^{\frac{n-1}{2}}\d t.
\end{eqnarray*}
Since 
\[
\int_{0}^{h\left(\rho,\,\delta\right)}\left(t/\rho\right)^{\frac{n-1}{2}}\d t=\frac{2}{n+1}h\left(\rho,\,\delta\right)^{\frac{n+1}{2}}\rho^{-\frac{n-1}{2}},
\]
we obtain 
\[
\frac{1}{2}\left(\frac{n+1}{\left|B_{2}^{n-1}\right|}\right)^{\frac{2}{n+1}}\rho^{-\frac{n-1}{n+1}}\le\frac{h\left(\rho,\,\delta\right)}{\delta^{\frac{2}{n+1}}}\le\frac{1}{2-h\left(\rho,\,\delta\right)/\rho}\left(\frac{n+1}{\left|B_{2}^{n-1}\right|}\right)^{\frac{2}{n+1}}\rho^{-\frac{n-1}{n+1}}.
\]
We conclude that 
\[
\lim_{\delta\searrow0}\frac{h\left(\rho,\,\delta\right)}{\delta^{\frac{2}{n+1}}}=\frac{1}{2}\left(\frac{n+1}{\left|B_{2}^{n-1}\right|}\right)^{\frac{2}{n+1}}\rho^{-\frac{n-1}{n+1}}.
\]
Recall that 
\[
\Delta\left(\rho,\,\delta\right)=\frac{\left|B_{2}^{n-1}\right|\int_{0}^{h\left(\rho,\,\delta\right)}tg\left(t\right)^{n-1}\d t}{\left|B_{2}^{n-1}\right|\int_{0}^{h\left(\rho,\,\delta\right)}g\left(t\right)^{n-1}\d t}.
\]
To compute the next limit, we apply twice L'Hospital's Rule, 
\begin{align*}
\lim_{h\to0}\frac{h}{\Delta} & =\lim\frac{h\int_{0}^{h}h^{n-1}\d t}{\int_{0}^{h}tg^{n-1}\d t}\overset{L}{=}\lim\frac{\int_{0}^{h}g^{n-1}\d t+hg\left(h\right)^{n-1}}{hg\left(h\right)^{n-1}}=1+\lim\frac{\int_{0}^{h}g^{n-1}\d t}{hg\left(h\right)^{n-1}}\\
 & \overset{L}{=}1+\lim\frac{\rho^{n-1}\left(2-\frac{r}{\rho}\right)^{\frac{n-1}{2}}\left(\frac{r}{\rho}\right)^{\frac{n-1}{2}}}{\rho^{n}\left(\frac{1}{\rho}\frac{n+1}{2}\left(\frac{r}{\rho}\right)^{\frac{n-1}{2}}\left(2-\frac{r}{\rho}\right)^{\frac{n-1}{2}}-\frac{1}{\rho}\frac{n-1}{2}\left(\frac{r}{\rho}\right)^{\frac{n+1}{2}}\left(2-\frac{r}{\rho}\right)^{\frac{n-3}{2}}\right)}\\
 & =1+\lim\frac{\left(2-\frac{r}{\rho}\right)}{\frac{n+1}{2}\left(2-\frac{r}{\rho}\right)-\frac{n-1}{2}\left(\frac{r}{\rho}\right)}=1+\frac{2}{n+1}=\frac{n+3}{n+1}.
\end{align*}
So, 
\[
\lim_{\delta\searrow0}\frac{\Delta\left(\rho,\,\delta\right)}{\delta^{\frac{2}{n+1}}}=\lim_{\delta\searrow0}\frac{h\left(\rho,\,\delta\right)}{\delta^{\frac{2}{n+1}}}\cdot\frac{\Delta\left(\rho,\,\delta\right)}{h\left(\rho,\,\delta\right)}=\frac{1}{2}\ \frac{n+1}{n+3}\left(\frac{n+1}{\left|B_{2}^{n-1}\right|}\right)^{\frac{2}{n+1}}\rho^{-\frac{n-1}{n+1}}.
\]
\end{proof}
We will also need the next lemma from \cite{Schuett:1990}: 
\begin{lem}
\label{difference} Let $K$ and $L$ be convex bodies in $\mathbb{R}^{n}$
such that $0\in\text{int}(L)$ and such that $L\sub K$. Then 
\[
|K|-|L|=\frac{1}{n}\int_{\partial K}\langle x,N(x)\rangle\left(1-\left|\frac{\norm{{x}_{L}}}{\norm x}\right|^{n}\right)\d{\mu_{K}\left(x\right)},
\]
where $x_{L}$ is the unique point in the intersection $\partial L\cap\left[0,x\right]$. 
\end{lem}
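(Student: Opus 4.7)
The natural route is to compute $|K| - |L|$ in polar coordinates centered at the origin and then translate the resulting spherical integral into a surface integral over $\partial K$ via the radial map. Write $\rho_K$ and $\rho_L$ for the radial functions of $K$ and $L$ with respect to $0$. Since $0\in\mathrm{int}(L)\subseteq\mathrm{int}(K)$, both are positive continuous functions on $\Sph^{n-1}$ with $\rho_L\le\rho_K$, and for $\theta=x/\norm x$ with $x\in\partial K$ one has $\rho_K(\theta)=\norm x$ and $\rho_L(\theta)=\norm{x_L}$.

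First I would apply Fubini in polar coordinates:
\[
|K|-|L|=\int_{\Sph^{n-1}}\int_{\rho_L(\theta)}^{\rho_K(\theta)}r^{n-1}\d r\d\theta=\frac{1}{n}\int_{\Sph^{n-1}}\bigl(\rho_K^{n}(\theta)-\rho_L^{n}(\theta)\bigr)\d\theta.
\]
Factoring $\rho_K^n(\theta)$ out of the integrand gives
\[
|K|-|L|=\frac{1}{n}\int_{\Sph^{n-1}}\rho_K^{n}(\theta)\left(1-\frac{\rho_L^{n}(\theta)}{\rho_K^{n}(\theta)}\right)\d\theta.
\]

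Next I would change variables from $\Sph^{n-1}$ to $\partial K$ via the radial parametrization $\theta\mapsto x=\rho_K(\theta)\theta$. The standard Jacobian computation, or equivalently the identity $|K|=\frac{1}{n}\int_{\partial K}\iprod x{N_K(x)}\d{\mu_K(x)}$ (which is the divergence theorem applied to the vector field $x$) applied in the form
\[
\int_{\partial K}f(x)\iprod x{N_K(x)}\d{\mu_K(x)}=\int_{\Sph^{n-1}}f(\rho_K(\theta)\theta)\,\rho_K^{n}(\theta)\d\theta,
\]
identifies $\rho_K^{n}(\theta)\d\theta$ on $\Sph^{n-1}$ with $\iprod x{N_K(x)}\d{\mu_K(x)}$ on $\partial K$. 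Substituting and using $\rho_L(\theta)/\rho_K(\theta)=\norm{x_L}/\norm x$ for the corresponding $x\in\partial K$ yields the stated formula.

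There is essentially no obstacle in the smooth/strictly convex case. The only point requiring some care is the justification of the Jacobian identity when $\partial K$ is merely the boundary of a convex body: one should invoke the fact that $N_K$ and $\mu_K$ are defined almost everywhere (with respect to surface measure) by Aleksandrov's theorem, and that the radial map is a Lipschitz bijection $\Sph^{n-1}\to\partial K$ with the above Jacobian at a.e.\ point. Given this standard fact, the identity follows by approximation (or directly by applying the divergence theorem to the vector field $x$ on the cone over a Borel subset of $\partial K$), and the proof is complete.
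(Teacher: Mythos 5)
Your proof is correct. Note that the paper itself does not prove this lemma but cites it from Sch\"utt (1990); your argument---polar coordinates for the volume difference, followed by the change of variables from $\Sph^{n-1}$ to $\partial K$ via the radial map, with the identity $\iprod x{N_K(x)}\d{\mu_K(x)}=\rho_K^n(\theta)\d\theta$---is exactly the standard route and, as far as I can tell, matches the argument in the cited source, including the justification of the Jacobian identity via the divergence theorem on cones over Borel subsets of $\partial K$ to handle the merely Lipschitz boundary.
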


For the next lemma we need a notion that was introduced in \cite{Schuett:1990}.
Let $K$ be a convex body in $\R^{n}$ and let $x\in\partial K$ be
such that $N_{K}(x)$ is unique. We put $r(x)$ to be the radius of
the biggest Euclidean ball contained in $K$ that touches $K$ in
$x$, 
\[
r(x)=\max\{\rho:B_{2}^{n}(x-\rho N_{K}(x),\rho)\sub K\}.
\]
If $N_{K}(x)$ is not unique, $r(x)=0$. It was shown in \cite[Lemma 5]{Schuett:1990}
that for any convex body $K$ in $\mathbb{R}^{n}$ and any $0\leq\alpha<1$,
\begin{equation}
\int_{\partial K}r(x)^{-\alpha}d\mu(x)<\infty.\label{finite}
\end{equation}

\begin{lem}
\label{Lebesgue} Let $K$ be a convex body in $\mathbb{R}^{n}$.
Let $x\in\partial K$ and let $x_{M,\delta}=\partial\left(\mwfloat{\delta}K{\phi}\right)\cap\left[0,x\right]$.
Then 
\[
\frac{\iprod x{N_{K}\left(x\right)}}{\delta^{\frac{2}{n+1}}}\ \left(1-\left|\frac{\norm{x_{M,\delta}}}{\norm x}\right|^{n}\right)\leq c\ n\ r(x)^{-\frac{n-1}{n+1}},
\]
where $c$ is a constant independent of $x$ and $\delta$. 
\end{lem}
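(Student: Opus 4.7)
The strategy is to bound $1-\norm{x_{M,\delta}}/\norm{x}$ from above by fitting a slight shrinkage of the maximal inscribed ball at $x$ inside $\mwfloat{\delta}K{\phi}$. Let $B:=B_{2}^{n}(x-r(x)N_{K}(x),r(x))\sub K$ and write $x_{0}:=x-r(x)N_{K}(x)$. By Theorem \ref{thm:floating_metro_iso},
\[
\mwfloat{\delta}K{\phi}\supseteq F_{(e-1)\delta/e}(K,\phi).
\]
Setting $\phi_{\min}:=\min_{K}\phi>0$, the inequality $\int_{H^{+}\cap K}\phi(y)\d y\ge\phi_{\min}\,|H^{+}\cap B|$ shows that every half-space $H^{+}$ arising in the definition of $F_{\delta}(K,\phi)$ also satisfies $|H^{+}\cap B|\le\delta/\phi_{\min}$, giving the monotone inclusion $F_{\delta}(K,\phi)\supseteq B_{\delta/\phi_{\min}}$. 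Combining these,
\[
\mwfloat{\delta}K{\phi}\supseteq B_{\alpha},\qquad\alpha:=(e-1)\delta/(e\phi_{\min}).
\]
Because $B$ is a Euclidean ball, $B_{\alpha}$ is the concentric ball $B_{2}^{n}(x_{0},r(x)-h)$, where $h=h(r(x),\alpha)$ is the cap-height function appearing in the proof of Proposition \ref{prop:floatingball}; the uniform estimate derived there yields an absolute constant $C_{n}>0$ with $h\le C_{n}\alpha^{2/(n+1)}r(x)^{-(n-1)/(n+1)}$.

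Next, parametrize the segment $[0,x]$ as $t\mapsto tx$, $t\in[0,1]$, and let $t_{*}$ be the largest $t$ with $tx\in B_{2}^{n}(x_{0},r(x)-h)$. Since $tx-x_{0}=r(x)N_{K}(x)-(1-t)x$, the equation $\norm{tx-x_{0}}^{2}=(r(x)-h)^{2}$ becomes, for $u:=1-t$, the quadratic
\[
\norm{x}^{2}u^{2}-2r(x)\iprod{x}{N_{K}(x)}\,u+\bigl(r(x)^{2}-(r(x)-h)^{2}\bigr)=0.
\]
The smaller positive root of a quadratic $A_{0}u^{2}-B_{0}u+C_{0}=0$ with $A_{0},B_{0},C_{0}>0$ satisfies the general bound $u_{-}=2C_{0}/(B_{0}+\sqrt{B_{0}^{2}-4A_{0}C_{0}})\le2C_{0}/B_{0}$; applied here and together with $r(x)^{2}-(r(x)-h)^{2}\le2r(x)h$, this reads
\[
1-t_{*}\le\frac{2h}{\iprod{x}{N_{K}(x)}}.
\]
Since $t_{*}x\in\mwfloat{\delta}K{\phi}\cap[0,x]$, we conclude $\norm{x_{M,\delta}}\ge t_{*}\norm{x}$.

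Combining this with the elementary inequality $1-t^{n}\le n(1-t)$ valid on $[0,1]$ gives
\[
\frac{\iprod{x}{N_{K}(x)}}{\delta^{2/(n+1)}}\left(1-\left|\frac{\norm{x_{M,\delta}}}{\norm{x}}\right|^{n}\right)\le\frac{2nh}{\delta^{2/(n+1)}}\le2nC_{n}\phi_{\min}^{-2/(n+1)}\bigl(\tfrac{e-1}{e}\bigr)^{2/(n+1)}r(x)^{-(n-1)/(n+1)},
\]
which establishes the lemma with $c:=2C_{n}\phi_{\min}^{-2/(n+1)}((e-1)/e)^{2/(n+1)}$. The main subtle point is that the quadratic has a real root (and hence $t_{*}$ is defined) only when its discriminant is non-negative, equivalently when $\norm{x}^{2}(r(x)^{2}-(r(x)-h)^{2})\le r(x)^{2}\iprod{x}{N_{K}(x)}^{2}$; this holds automatically once $\delta$, hence $h$, is sufficiently small, which is exactly the regime relevant for the application to dominated convergence in Theorem \ref{thm:main}. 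For the remaining bounded range of larger $\delta$, the factor $\delta^{-2/(n+1)}$ is itself bounded by a constant depending only on $K$ and $\phi$, so the inequality can be completed by enlarging $c$ accordingly.
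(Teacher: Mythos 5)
Your first step matches the paper's: use the inclusion $F_{\bullet}(K,\phi)\subseteq\mwfloat{\delta}K{\phi}$ from Theorem \ref{thm:floating_metro_iso} to reduce to a floating-body estimate. From there the paper simply cites Lemma 8 of \cite{Schuett:1990}, whereas you re-derive that estimate from scratch via the inscribed ball $B=B_2^n(x-r(x)N_K(x),r(x))$, the inclusion $B_{\alpha}\subseteq F_\delta(K,\phi)$ and the quadratic along $[0,x]$. Two small points: the sharpened inclusion $F_{(e-1)\delta/e}\subseteq\mwfloat{\delta}K{\phi}$ needs $\phi$ log-concave, while the lemma is applied under mere continuity in Theorem \ref{thm:main}; the elementary inclusion $F_{\delta}(K,\phi)\subseteq\mwfloat{\delta}K{\phi}$, which needs no concavity, would serve just as well here and should be used instead.

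The substantive gap is the discriminant case. You need the line through $0$ and $x$ to hit $B_2^n(x_0,r(x)-h)$, which (using $\iprod{x}{N_K(x)}\ge\rho_0$ and $\|x\|\le\mathrm{diam}(K)$) is guaranteed only when $h\lesssim r(x)$, i.e.\ roughly when $\delta\lesssim r(x)^n$. You propose to cover the complementary range by ``enlarging $c$'' because $\delta^{-2/(n+1)}$ is then bounded --- but the threshold $\delta\gtrsim r(x)^n$ depends on $x$ through $r(x)$, so it is not a fixed lower bound on $\delta$. In fact a short comparison shows that neither branch of your argument covers the regime $r(x)^n\lesssim\delta\lesssim r(x)^{(n-1)/2}$: there the quadratic has no real root (or $B_\alpha$ is already empty when $h>r(x)$), while the crude estimate $\mathrm{LHS}\le\mathrm{diam}(K)\,\delta^{-2/(n+1)}$ is larger than $c\,n\,r(x)^{-(n-1)/(n+1)}$. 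Since $r(x)$ can be arbitrarily small on $\partial K$, this is a genuine hole, and it is precisely the delicate range that Sch\"utt--Werner's Lemma 7/8 is designed to handle (their argument runs through the normal direction $N_K(x)$ and a cone estimate rather than the chord $[0,x]$, which avoids the tangency degeneracy). So your proof is a correct and instructive re-derivation on the main part of the parameter range, but as written it does not establish the lemma uniformly in $(x,\delta)$, which is exactly what is needed for the dominated-convergence application.
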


\begin{proof}
Let $x_{F,\delta}=\partial\left(F_{\delta}\left(K,\phi\right)\right)\cap\left[0,x\right]$.
By Theorem \ref{thm:floating_metro_iso}, we have that $F_{\delta}\left(K,\phi\right)\sub M_{\delta}\left(K,\phi\right)$
and hence $\|x_{\F,\delta}\|\leq\|x_{M,\delta}\|$. Therefore 
\[
\frac{\iprod x{N_{K}\left(x\right)}}{\delta^{\frac{2}{n+1}}}\ \left(1-\left|\frac{\norm{x_{M,\delta}}}{\norm x}\right|^{n}\right)\leq\frac{\iprod x{N_{K}\left(x\right)}}{\delta^{\frac{2}{n+1}}}\ \left(1-\left|\frac{\norm{x_{F,\delta}}}{\norm x}\right|^{n}\right)
\]
and it was shown in \cite{Schuett:1990}, Lemma 8, that the latter
is smaller than or equal $c\ n\ r(x)^{-\frac{n-1}{n+1}}$. 
\end{proof}
\noindent The next lemma was proved in \cite{Schuett:1990}. There,
and in the proof of the main theorem, we need the indicatrix of Dupin
(see, e.g., \cite{Schuett:2003}). A theorem of Alexandrov \cite{Alexandroff}
and Busemann and Feller \cite{Buseman1935} shows that the indicatrix
of Dupin exists almost everywhere on $\partial K$ and is an ellipsoid
or an elliptic cylinder. We also use the notation $C(r,h)$ for the
cap of a Euclidean ball with radius $r$ and height $h$. 
\begin{lem}
\cite{Schuett:1990}\label{distance} Let $K$ be a convex body in
$\mathbb{R}^{n}$ with $0\in\partial K$ and $N_{K}(0)=-e_{n}=(0,\cdots,0,-1)$.
Suppose the indicatrix of Dupin at $0$ exists and is an $(n-1)$-dimensional
sphere with radius $\sqrt{\rho}$. Let $\xi$ be an interior point
of $K$. \vskip 2mm (i) Let $H$ be the hyperplane orthogonal to
$N_{K}(0)$ and passing through $z$ in $[0,\xi]$. We put $z_{n}=\langle z,e_{n}\rangle$.
Then we have for $0\leq z_{n}\leq\rho$, 
\[
\left|K\cap H^{+}\right|\leq f(z_{n})^{n-1}\left|C(\rho,z_{n})\right|.
\]

\noindent (ii) Let $d=\text{dist}\left(z,B_{2}^{n}(\rho\ e_{n},\rho)^{C}\right)$.
There is $\varepsilon_{0}>0$ such that we have for all $z\in[0,\xi]$
with $\|z\|\leq\varepsilon_{0}$ 
\[
d\leq z_{n}\leq d+\frac{2\ d^{2}}{\rho\langle\frac{\xi}{\|\xi\|},N_{K}(0)\rangle^{2}}.
\]

\noindent (iii) There is $\varepsilon_{0}>0$ and an absolute constant
$c>0$ such that for all $z\in[0,\xi]$ with $\|z\|\leq\varepsilon_{0}$
and all hyperplanes $H$ passing through $z$ 
\[
\left|K\cap H^{+}\right|\geq f(\gamma)^{-n+1}\left|C(\rho,d(1-c(f(\gamma)-1))\right|.
\]

\noindent Here, $\gamma=2\sqrt{2\ \rho\ d}$ and $f$ is a monotone
function on $\mathbb{R}^{+}$ such that $\lim_{t\rightarrow}f(t)=1$. 
\end{lem}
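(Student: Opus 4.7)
The plan is to deduce all three parts from a single \emph{cross-section sandwich} for $\partial K$ near $0$ implied by the Dupin indicatrix hypothesis at that point. Because $N_{K}(0)=-e_{n}$ and the indicatrix is the $(n-1)$-sphere of radius $\sqrt{\rho}$, the boundary $\partial K$ can be written locally as a graph $x_{n}=\tfrac{1}{2\rho}\|x'\|^{2}+o(\|x'\|^{2})$, matching $\partial B_{2}^{n}(\rho e_{n},\rho)$ to second order. From this I would extract a monotone function $f:\R^{+}\to[1,\infty)$ with $\lim_{t\to 0^{+}}f(t)=1$ such that, for all sufficiently small $t>0$,
\[
B_{2}^{n}(\rho f(t)^{-2}e_{n},\rho f(t)^{-2})\cap\{x_{n}\le t\}\subseteq K\cap\{x_{n}\le t\}\subseteq B_{2}^{n}(\rho f(t)^{2}e_{n},\rho f(t)^{2})\cap\{x_{n}\le t\};
\]
equivalently, for every $s\in[0,t]$ the horizontal cross-section $K\cap\{x_{n}=s\}$ is squeezed between the two axial $(n-1)$-balls of radii $f(t)^{-1}\sqrt{2\rho s-s^{2}}$ and $f(t)\sqrt{2\rho s-s^{2}}$. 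This sandwich is the only nontrivial analytic input; the remainder of the argument is calculus.

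For part (i) I would apply Fubini along the $e_{n}$-axis and the outer sandwich at scale $t=z_{n}$:
\[
|K\cap H^{+}|=\int_{0}^{z_{n}}|K\cap\{x_{n}=s\}|_{n-1}\,ds\le |B_{2}^{n-1}|\,f(z_{n})^{n-1}\int_{0}^{z_{n}}(2\rho s-s^{2})^{(n-1)/2}\,ds=f(z_{n})^{n-1}\,|C(\rho,z_{n})|.
\]

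Part (ii) does not use the sandwich at all, only the Euclidean ball $B_{2}^{n}(\rho e_{n},\rho)$. Writing $z=s\xi$ gives $\|z\|^{2}=z_{n}^{2}/\cos^{2}\alpha$ with $\cos\alpha=\langle\xi/\|\xi\|,e_{n}\rangle=-\langle\xi/\|\xi\|,N_{K}(0)\rangle$, and the identity $d=\rho-\|z-\rho e_{n}\|$ gives $(\rho-d)^{2}=\rho^{2}-2\rho z_{n}+z_{n}^{2}/\cos^{2}\alpha$, i.e.\ $2\rho(z_{n}-d)=z_{n}^{2}/\cos^{2}\alpha-d^{2}$. The lower bound $d\le z_{n}$ is immediate (with equality iff $z$ lies on the axis), and for $\|z\|\le\eps_{0}$ small enough one has $z_{n}\le 2d$, so the right-hand side is at most $4d^{2}/\cos^{2}\alpha$, yielding the stated $z_{n}\le d+2d^{2}/(\rho\cos^{2}\alpha)$.

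Part (iii) is the main obstacle, because $H$ is now arbitrary and one cannot reduce to slices along a fixed axis. The strategy is to choose the scale $\gamma=2\sqrt{2\rho d}$, which is (up to absolute constants) the diameter of the smallest cap through $z$ in $B_{2}^{n}(\rho e_{n},\rho)$, and to apply the \emph{inner} half of the sandwich at that scale, giving $B_{2}^{n}(\rho f(\gamma)^{-2}e_{n},\rho f(\gamma)^{-2})\cap H^{+}\subseteq K\cap H^{+}$ for every hyperplane $H$ through $z$. Then $|K\cap H^{+}|$ is bounded below by the smallest cap cut from this shrunken ball by a hyperplane through $z$. In a Euclidean ball of centre $c$ and radius $R$ the minimum of such caps is attained when $H$ is orthogonal to $z-c$ and has height $R-\|z-c\|$; carrying out this computation inside $B_{2}^{n}(\rho f(\gamma)^{-2}e_{n},\rho f(\gamma)^{-2})$ with the help of $\|z-\rho e_{n}\|=\rho-d$ produces, by a first-order Taylor expansion in $f(\gamma)-1$, a height of the form $d(1-c(f(\gamma)-1))+O((f(\gamma)-1)^{2})$ for an absolute constant $c$, and rescaling the resulting cap of $B(\rho f(\gamma)^{-2},\rho f(\gamma)^{-2})$ to a cap of $B(\rho,\rho)$ supplies the prefactor $f(\gamma)^{-(n-1)}$ coming from the $(n-1)/2$-power of the squared radius ratio. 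The delicate steps are (a) confirming that $\gamma$ is the correct common scale at which the inner sandwich is simultaneously applicable to every hyperplane through $z$ (no candidate cap should escape the range where $f$ controls the geometry), and (b) executing the scaling bookkeeping so that the two independent corrections — shrunken radius and shrunken height — assemble exactly into the two announced factors with no residual lower-order leakage.
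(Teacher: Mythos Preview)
This lemma is not proved in the paper; it is quoted from Sch\"utt--Werner (1990) and used as a black box (in fact only part~(ii) is explicitly invoked later, in the proof of the main theorem). There is therefore no in-paper argument to compare your proposal against.

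That said, your sketch is essentially the argument of the original reference. The cross-section sandwich you extract from the spherical Dupin indicatrix is exactly the right analytic input, and parts~(i) and~(ii) follow from it as you describe: Fubini along $e_n$ for~(i), and for~(ii) the identity $2\rho(z_n-d)=\|z\|^2-d^2$ together with $z_n\le 2d$ for small $\|z\|$.

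For~(iii) your strategy is correct, but the gap you flag as~(a) is genuine and not quite as soft as your wording suggests. The inner inclusion $B'\cap\{x_n\le\gamma\}\subseteq K$ does not by itself give $B'\cap H^+\subseteq K$ for an \emph{arbitrary} hyperplane $H$ through $z$, because the cap $B'\cap H^+$ may protrude beyond the slab. What rescues the argument is that any such cap (taken on the side containing $0\in\partial B'$, with the center $c'$ of $B'$ on the opposite side) contains $0$ and has diameter at most $2\sqrt{2\rho' h_H}$, where $h_H$ is its height; hence its $e_n$-extent is at most $2\sqrt{2\rho' h_H}$. One then runs a dichotomy: either $h_H$ is comparable to $d$, so the cap sits inside a slab of width comparable to $\gamma$ and your computation goes through (possibly after replacing $f$ by another monotone function with the same limit), or $h_H\gg d$, in which case already $|B'\cap H^+\cap\{x_n\le\gamma\}|$ exceeds the claimed right-hand side by a crude cap-volume comparison. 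Making this precise is a page of elementary estimates rather than a one-liner, but it requires no ideas beyond those you have written down. Gap~(b) is, as you say, routine bookkeeping.
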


\begin{lem}
\label{lem: UnaffectedCut} Let $K\sub\R^{n}$ be a convex body. Moreover,
we assume that $0\in\partial K$ and that $N_{K}(0)=-e_{n}$ is the
unique outer normal to $\partial K$ at $0$. Let $\phi:K\rightarrow\left(0,\infty\right)$
be a continuous function. We set $H_{t}^{+}=H^{+}(-e_{n},-t)=\left\{ y\,:\,\iprod y{e_{n}}<t\right\} $.
Then, for each $t>0$, there exists $r>0$ such that for any $\delta>0$,
\[
\mwfloat{\delta}K{\phi}\cap B_{2}^{n}\left(0,\,r\right)=\mwfloat{\delta}{K\cap H_{t}^{+}}{\phi}\cap B_{2}^{n}\left(0,\,r\right).
\]
\end{lem}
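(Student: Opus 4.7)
The plan is to argue by contradiction. The inclusion $\mwfloat{\delta}{K\cap H_{t}^{+}}{\phi} \subset \mwfloat{\delta}{K}{\phi}$ is automatic, since any admissible $f$ for $K\cap H_{t}^{+}$ extends by zero to an admissible function on $K$ producing the same weighted average; so only the reverse inclusion on $B_{2}^{n}(0,r)$ needs to be established. If no such $r$ exists, one extracts sequences $r_{k}\downarrow 0$, $\delta_{k}>0$, and $q_{k}\in B_{2}^{n}(0,r_{k})$ with $q_{k}\in\mwfloat{\delta_{k}}{K}{\phi}\setminus\mwfloat{\delta_{k}}{K\cap H_{t}^{+}}{\phi}$, and I will derive a contradiction in four steps.

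First I would force $\delta_{k}\to 0$. The joint continuity of $h_{\mwfloat{\delta}{K}{\phi}}$ in $(\theta,\delta)$ from Proposition \ref{prop:ExtremePointOfMetronoid} yields Hausdorff continuity of $\mwfloat{\delta}{K}{\phi}$ in $\delta$, so along a subsequence $\delta_{k}\to\delta_{\infty}>0$ the limit $0$ of $q_{k}$ would lie in $\mwfloat{\delta_{\infty}}{K}{\phi}$; but since $K\subset\{y_{n}\ge 0\}$ and $K\cap\{y_{n}=0\}$ has Lebesgue measure zero, every $\phi$-weighted average of total mass $\delta>0$ on $K$ has strictly positive $n$-th coordinate, so $0\notin\mwfloat{\delta}{K}{\phi}$ for any $\delta>0$.

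Second, I would apply convex separation: pick $\theta_{k}\in\Sph^{n-1}$ with $\iprod{q_{k}}{\theta_{k}}>h_{\mwfloat{\delta_{k}}{K\cap H_{t}^{+}}{\phi}}(\theta_{k})$. Combined with $\iprod{q_{k}}{\theta_{k}}\le h_{\mwfloat{\delta_{k}}{K}{\phi}}(\theta_{k})$, the two support functions differ at $\theta_{k}$; by the cap formula in Proposition \ref{prop:ExtremePointOfMetronoid}, this forces the cap $C_{k}:=K\cap H^{+}(\theta_{k},d(\theta_{k},\delta_{k}))$ not to be contained in $H_{t}^{+}$ (otherwise it would agree with the corresponding cap of $K\cap H_{t}^{+}$ and produce the same support value). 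So I can pick $y_{k}^{*}\in C_{k}$ with $(y_{k}^{*})_{n}\ge t$. Passing to a subsequence, $\theta_{k}\to\theta_{\infty}\in\Sph^{n-1}$ and $y_{k}^{*}\to y_{\infty}^{*}\in K$ with $(y_{\infty}^{*})_{n}\ge t$. Taking limits in the separation inequality, using $\iprod{q_{k}}{\theta_{k}}\to 0$ and $h_{\mwfloat{\delta_{k}}{K\cap H_{t}^{+}}{\phi}}(\theta_{k})\to h_{K\cap H_{t}^{+}}(\theta_{\infty})$ (the continuity statement of Proposition \ref{prop:ExtremePointOfMetronoid} extended through $\delta=0$), yields $h_{K\cap H_{t}^{+}}(\theta_{\infty})\le 0$; since $0\in K\cap H_{t}^{+}$ the reverse inequality also holds, so $h_{K\cap H_{t}^{+}}(\theta_{\infty})=0$ and $\theta_{\infty}$ is an outer normal to $K\cap H_{t}^{+}$ at $0$. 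Because $0$ lies at distance $t>0$ from the cutting hyperplane $\{y_{n}=t\}$, the sets $K$ and $K\cap H_{t}^{+}$ coincide in a neighborhood of $0$, hence share normal cones there; by uniqueness of $N_{K}(0)$ I conclude $\theta_{\infty}=-e_{n}$.

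Finally I would close via a cap-collapse estimate. Writing $\theta_{k}=-e_{n}+v_{k}$ with $v_{k}\to 0$, and using $d(\theta_{k},\delta_{k})\to h_{K}(-e_{n})=0$, any $y\in C_{k}$ satisfies $y_{n}\le -d(\theta_{k},\delta_{k})+\norm{v_{k}}\,\mathrm{diam}(K)$, so $\max_{y\in C_{k}}y_{n}\to 0$ uniformly, contradicting $(y_{k}^{*})_{n}\ge t>0$. The main obstacle is the limit identification $\theta_{\infty}=-e_{n}$ in the third step, which couples the joint continuity of $h_{\mwfloat{\delta}{K}{\phi}}$ extended through $\delta=0$ with the uniqueness of $N_{K}(0)$; the remaining steps reduce to routine compactness and elementary estimates.
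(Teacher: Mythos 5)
Your argument is correct, and it rests on the same analytic core as the paper's proof: the joint continuity of $h_{\mwfloat{\delta}K{\phi}}$ (equivalently, of $d\left(\theta,\delta\right)$) down to $\delta=0$ from Proposition \ref{prop:ExtremePointOfMetronoid}, the uniqueness of $N_{K}\left(0\right)$, and the fact that a cap of $K$ with data $\left(\theta,d\right)$ near $\left(-e_{n},0\right)$ is contained in $H_{t}^{+}$. The skeleton, however, is organized differently. The paper attaches to each point $x\in{\rm int}\left(K\right)$ near the origin the level $\delta\left(x\right)$ with $x\in\partial\mwfloat{\delta\left(x\right)}K{\phi}$ and the outer normal $\theta\left(x\right)$ there, proves by the same compactness/contradiction that $\left(\theta\left(x\right),\delta\left(x\right)\right)\to\left(-e_{n},0\right)$ as $x\to0$, deduces that the cap whose $\phi$-barycenter is $x$ lies in $H_{t}^{+}$, and closes with the monotonicity $\mwfloat{\delta\left(x\right)}{K\cap H_{t}^{+}}{\phi}\sub\mwfloat{\delta}{K\cap H_{t}^{+}}{\phi}$. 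You avoid this level foliation entirely: you separate the offending point $q_{k}$ from the smaller body, observe that a strict gap between the two support functions at $\theta_{k}$ forces the cap $C_{k}$ to protrude past $\left\{ y_{n}=t\right\} $, and rule that out with the explicit collapse estimate $\max_{y\in C_{k}}y_{n}\le-d\left(\theta_{k},\delta_{k}\right)+\norm{v_{k}}\,{\rm diam}\left(K\right)\to0$, which is a quantitative form of the containment the paper merely asserts for $\left(\theta,d\right)$ close to $\left(-e_{n},0\right)$. Both routes are sound; yours trades the foliation-plus-monotonicity step for convex separation plus the protruding-cap observation. Two small remarks: your first step does not need Hausdorff continuity, since the bodies are nested decreasing in $\delta$, so $\delta_{k}\ge\delta_{0}>0$ along a subsequence would trap $q_{k}$ in the fixed compact set $\mwfloat{\delta_{0}}K{\phi}$, which misses a neighborhood of $0$; and the separation step tacitly requires $\mwfloat{\delta_{k}}{K\cap H_{t}^{+}}{\phi}$ to be nonempty, which is guaranteed once $\delta_{k}\to0$, so the order of your steps matters and should be kept as written.
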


\begin{proof}
It is obvious that 
\[
\mwfloat{\delta}{K\cap H_{t}^{+}}{\phi}\cap B_{2}^{n}\left(0,\,r\right)\sub\mwfloat{\delta}K{\phi}\cap B_{2}^{n}\left(0,\,r\right).
\]
Therefore, it is sufficient to show the other inclusion. Let $d\ge0$.
Observe that if $\left(\theta,\,d\right)$ is sufficiently close to
$\left(-e_{n},\,0\right)$, then $H^{+}(\theta,-d)\cap K\sub H_{t}^{+}$,
where $H^{+}(\theta,-d)=\left\{ y\,:\langle y,-\theta\rangle<d\right\} $.
As noted in (\ref{stetig}), the function $d\left(\theta,\delta\right)$
is continuous in $\left(\theta,\,\delta\right)$. Therefore, there
exists $\delta_{0}>0$ and $\eps>0$ such that 
\begin{equation}
K\cap H^{+}\left(\theta,\,d\left(\theta,\,\delta\right)\right)\sub H_{t}^{+},\label{eq:UnaffectedCutCond1}
\end{equation}
for $\norm{\theta-\left(-e_{n}\right)}<\eps$ and $0\le\delta<\delta_{0}$.
For each $x$ in the interior of $K$, let $\delta\left(x\right)$
be the value such that $x\in\partial\mwfloat{\delta\left(x\right)}K{\phi}$
and $\theta\left(x\right)$ denote the unique outer normal at $x$
of $\mwfloat{\delta\left(x\right)}K{\phi}$. 

\noindent ${\bf Claim:}$ For any $\delta_{0}>0$ and $\eps>0$, there
exists $r>0$ such that $\delta\left(x\right)<\delta_{0}$ and $\norm{\theta\left(x\right)-\left(-e_{n}\right)}<\eps$,
for $x\in{\rm int}\left(K\right)\cap B_{2}^{n}\left(0,\,r\right)$.

\noindent Indeed, note that $\mwfloat{\delta_{0}}K{\phi}$ is strictly
contained in $K$. Thus, $0\notin\mwfloat{\delta_{0}}K{\phi}$. Since
$\mwfloat{\delta_{0}}K{\phi}$ is convex, there exists $r>0$ so that
$B_{2}^{n}\left(0,\,r\right)\cap\mwfloat{\delta_{0}}K{\phi}=\emptyset$.
Then, $\delta\left(x\right)<\delta_{0}$ for $x\in{\rm int}\left(K\right)\cap B_{2}^{n}\left(0,\,r\right)$.

\noindent It remains to show that there exists $r>0$ such that $\norm{\theta\left(x\right)-\left(-e_{n}\right)}<\eps$
for ${\rm int}\left(K\right)\cap B_{2}^{n}\left(0,\,r\right)$. Suppose
it is false. Then there exists a sequence $(x_{k})_{k\in\mathbb{N}}$
in ${\rm int}\left(K\right)$ such that $x_{k}\rightarrow0$ and such
that $\norm{\theta\left(x_{k}\right)-\left(-e_{n}\right)}>\eps$.
By the compactness of $\Sph^{n-1}$, we may replace $(x_{k})_{k\in\mathbb{N}}$
by a subsequence, again denoted by $(x_{k})_{k\in\mathbb{N}}$, so
that $\theta\left(x_{k}\right)$ converges to some $\theta_{1}\neq-e_{n}$.
Moreover, $\delta\left(x_{k}\right)\rightarrow0$ since the first
claim is true. Continuity of $h_{\mwfloat{\delta}K{\phi}}\left(\theta\right)$
implies that $h_{\mwfloat{\delta\left(x_{k}\right)}K{\phi}}\left(\theta\left(x_{k}\right)\right)\rightarrow h_{K}\left(\theta_{1}\right)$.
As $-e_{n}$ is the unique outer normal to $\partial K$ in $0$,
$h_{K}\left(\theta_{1}\right)>\iprod 0{\theta_{1}}=0$. Therefore,
we obtain a contradiction, as $h_{\mwfloat{\delta\left(x_{k}\right)}K{\phi}}\left(\theta\left(x_{k}\right)\right)=\iprod{x_{k}}{\theta\left(x_{k}\right)}$,
which converges to $0$ as $x_{k}\rightarrow0$. This completes the
proof of the claim.

\noindent Hence, with the assumptions on $\delta_{0}$ and $\eps$,
we conclude that there exists $r>0$ such that for $x\in{\rm int}\left(K\right)\cap B_{2}^{n}\left(0,\,r\right)$,
\[
K\cap H^{+}\left(\theta\left(x\right),\,d\left(\theta\left(x\right),\,\delta\left(x\right)\right)\right)\sub H_{t}^{+}.
\]

\noindent Let $x\in M_{\delta}\left(K,\,\phi\right)\cap B_{2}^{n}\left(0,\,r\right)$.
Since $x\in{\rm int}\left(K\right)\cap B_{2}^{n}\left(0,\,r\right)$,
\begin{align*}
K\cap H^{+}\left(\theta(x),\,d\left(\theta(x),\,\delta(x)\right)\right) & \sub H_{t}^{+},
\end{align*}
and thus $x\in\mwfloat{\delta\left(x\right)}{K\cap H_{t}^{+}}{\phi}$.
Moreover, notice that $\delta\left(x\right)\ge\delta$ and hence we
have 
\[
\mwfloat{\delta\left(x\right)}{K\cap H_{t}^{+}}{\phi}\sub\mwfloat{\delta}{K\cap H_{t}^{+}}{\phi}.
\]
Hence, $x\in\mwfloat{\delta}{K\cap H_{t}^{+}}{\phi}$. Therefore,
$\mwfloat{\delta}K{\phi}\cap B\left(0,\,r\right)\sub\mwfloat{\delta}{K\cap H_{t}^{+}}{\phi}\cap B\left(0,\,r\right).$
\end{proof}

\subsection{Proof of Theorem \ref{thm:main}}

\noindent Recall that $x_{M}$ is the unique point in $\partial\left(\mwfloat{\delta}K{\phi}\right)\cap\left[0,x\right]$.
We will sometimes write in short $x_{M}$ for $x_{M,\delta}$. By
Lemmas \ref{difference} and \ref{Lebesgue}, we have that 
\[
\lim_{\delta\rightarrow0}\frac{\left|K\right|-\left|\mwfloat{\delta}K{\phi}\right|}{\delta^{\frac{2}{n+1}}}=\frac{1}{n}\int_{\partial K}\lim_{\delta\rightarrow0}\delta^{-\frac{2}{n+1}}\iprod x{N_{K}\left(x\right)}\left(1-\left|\frac{\norm{x_{M}}}{\norm x}\right|^{n}\right)\d{\mu_{K}\left(x\right)}.
\]
For $x\in\partial K$ fixed, the goal is to understand 
\[
\lim_{\delta\searrow0}\frac{1}{n}\int_{\partial K}\delta^{-\frac{2}{n+1}}\iprod x{N_{K}\left(x\right)}\left(1-\left|\frac{\norm{x_{M}}}{\norm x}\right|^{n}\right)\d{\mu_{K}\left(x\right)}.
\]
As $x$ and $x_{M}$ are collinear and as for all $0\leq a\leq1$,
\[
1-na\le\left(1-a\right)^{n}\le1-na+\frac{n\left(n-1\right)}{2}a^{2},
\]
we get for $\delta$ sufficiently small that 
\begin{eqnarray}
 &  & \hskip-10mm\frac{\|x-x_{M}\||}{\|x\|}\left(1-\frac{n-1}{2}\frac{\|x-x_{M}\||}{\|x\|}\right)\leq\frac{1}{n}\left(1-\left|\frac{\norm{x_{M}}}{\norm x}\right|^{n}\right)=\nonumber \\
 &  & \hskip20mm\frac{1}{n}\left[1-\left(1-\frac{\norm{x-x_{M}}}{\norm x}\right)^{n}\right]\leq\frac{\|x-x_{M}\||}{\|x\|}.
\end{eqnarray}
\textbf{(i)} We assume first that the indicatrix of Dupin at $x\in\partial K$
is an ellipsoid. In fact, by a change of the coordinate system, we
may also assume that $x=0$ and $N_{K}\left(0\right)=-e_{n}$. Let
$\zeta\in\R^{n}$ be the origin in the previous coordinate system.
Let $y_{M,\delta}:=\partial\left(\mwfloat{\delta}K{\phi}\right)\cap\left[0,\zeta\right]$.
Notice that $\norm{y_{M,\delta}}=\norm{x-x_{M,\delta}}$ and that
$y_{M,\delta}\rightarrow0$ as $\delta\searrow0$. Thus 
\begin{equation}
\lim_{\delta\searrow0}\iprod x{N_{K}\left(x\right)}\frac{\norm{x-x_{M,\delta}}}{\norm x}=\lim_{\delta\searrow0}\iprod{\zeta}{e_{n}}\frac{\norm{y_{M,\delta}}}{\norm{\zeta}}=\lim_{\delta\searrow0}\iprod{y_{M,\,\delta}}{e_{n}}.\label{limes}
\end{equation}
There exists a volume preserving positive definite linear transform
$T$ such that $N_{TK}\left(0\right)=-e_{n}$ and such that the indicatrix
of Dupin at $0$ becomes a Euclidean ball with radius $\sqrt{\rho}$
(see, e.g., equation $\left(5\right)$ in \cite{Schuett:2003}). Moreover,
$\rho$ satisfies 
\[
\kappa_{K}\left(0\right)=\frac{1}{\rho^{n-1}}.
\]
Let $H^{+}$ be the half space such that 
\[
\delta=\int_{K\cap H^{+}}\phi\left(y\right)\d y\quad\ \ \text{and}\ \ \ y_{M,\,\delta}=\frac{\int_{K\cap H^{+}}y\phi\left(y\right)\d y}{\delta}.
\]
As $T$ is volume preserving, $\int_{TK\cap TH^{+}}\phi\left(T^{-1}y\right)\d y=\delta$,
and thus 
\begin{eqnarray*}
Ty_{M,\,\delta} & = & \int_{K\cap H^{+}}Ty\phi\left(y\right)\d y/\delta=\int_{TK\cap TH^{+}}y\phi\left(T^{-1}y\right)\d y/\delta\\
 & \in & \partial\mwfloat{\delta}{TK}{\phi\circ T^{-1}}.
\end{eqnarray*}
As a consequence we have 
\[
\left[0,\,T\zeta\right]\cap\partial\mwfloat{\delta}{TK}{\phi\circ T^{-1}}=Ty_{M,\,\delta},
\]
\[
\phi\left(T^{-1}0\right)=\phi\left(0\right),
\]
and 
\[
\iprod{Ty_{M,\delta}}{e_{n}}=\iprod{y_{M,\delta}}{Te_{n}}=\iprod{y_{M,\delta}}{e_{n}}.
\]
Hence we have reduced the problem to the case when the indicatrix
of Dupin at $0\in\partial K$ is a Euclidean sphere with radius $\sqrt{\rho}$
and $\kappa_{K}\left(0\right)=\frac{1}{\rho^{n-1}}$.

\noindent Moreover, $\partial K$ can be approximated in $0$ by a
Euclidean ball $B_{2}^{n}(\rho e_{n},\rho)$ of radius $\rho$ and
center $\rho e_{n}$ in the following sense (see, e.g., \cite[Proof of Lemma 23]{Schuett:2004}):

\noindent Let $\varepsilon>0$ be given. Let $B_{2}^{n}\left((1-\varepsilon)\rho e_{n},(1-\eps)\rho\right)$
be the Euclidean ball centered at $(1-\varepsilon)\rho e_{n}$ whose
radius is $\left(1-\eps\right)\rho$. Similarly, let $B_{2}^{n}\left((1+\varepsilon)\rho e_{n},(1+\eps)\rho\right)$
be the Euclidean ball centered at $(1+\varepsilon)\rho$ with radius
$(1+\varepsilon)\rho$. Then, 
\begin{eqnarray*}
\hskip-10mm0\in\partial\left[B_{2}^{n}(\rho e_{n},\rho)\right],\ \ 0\in\partial\left[B_{2}^{n}\left((1-\varepsilon)\rho e_{n},(1-\eps)\rho\right)\right],\\
\hskip20mm0\in\partial\left[B_{2}^{n}\left((1+\varepsilon)\rho e_{n},(1+\eps)\rho\right)\right],
\end{eqnarray*}
and 
\[
N_{B_{2}^{n}(\rho e_{n},\rho)}=N_{B_{2}^{n}\left((1-\varepsilon)\rho e_{n},(1-\eps)\rho\right)}=N_{B_{2}^{n}\left((1+\varepsilon)\rho e_{n},(1+\eps)\rho\right)}=-e_{n}
\]
and (see, e.g., \cite[Proof of Lemma 23]{Schuett:2004}) there exists
$\Delta_{\varepsilon}^{0}$ such that for $0<t<\Delta_{\eps}^{0}$,
the half-space $H_{t}^{+}=\{y:\langle y,e_{n}\rangle\leq t\}$ determined
by the hyperplane orthogonal to $e_{n}$ through the point $te_{n}$
is such that 
\begin{eqnarray}
 &  & \hskip-20mmH_{t}^{+}\cap B_{2}^{n}\left((1-\varepsilon)\rho e_{n},(1-\eps)\rho\right)\subseteq H_{t}^{+}\ \cap\ K\nonumber \\
 &  & \hskip20mm\subseteq H_{t}^{+}\ \cap B_{2}^{n}\left((1+\varepsilon)\rho e_{n},(1+\eps)\rho\right).\label{eq:SameAsCap}
\end{eqnarray}
By continuity of $\phi$ there exists $s>0$ such that for all $y\in\text{int}(B_{2}^{n}(0,s))$,
\begin{equation}
\left(1-\eps\right)\phi(0)\leq\phi(y)\leq\left(1+\eps\right)\phi(0).\label{stetig-1}
\end{equation}

\noindent We will apply Lemma \ref{lem: UnaffectedCut} with $t=\Delta_{\eps}^{0}$
simultaneously to $K$, $B_{2}^{n}\left((1-\varepsilon)\rho e_{n},(1-\eps)\rho\right)$
and $B_{2}^{n}\left((1+\varepsilon)\rho e_{n},(1+\eps)\rho\right)$
with weights $\phi,\,\left(1-\eps\right)\phi\left(0\right),$ and
$\left(1+\eps\right)\phi\left(0\right)$ respectively.

\noindent Let $H_{\Delta_{\eps}}^{+}=\{y:\langle y,e_{n}\rangle\leq\Delta_{\varepsilon}\}$.
We choose $\Delta_{\varepsilon}\leq\Delta_{\varepsilon}^{0}$ so small
that 
\[
H_{\Delta_{\eps}}^{+}\ \cap\ B_{2}^{n}\left((1+\varepsilon)\rho e_{n},(1+\eps)\rho\right)\ \sub B_{2}^{n}(0,\min\left\{ s,\,r\right\} ),
\]
where $r$ is given by Lemma \ref{lem: UnaffectedCut}. We denote
\begin{align*}
d_{M,\,\delta}^{-}=\text{dist}\left(y_{M,\,\delta},B_{2}^{n}\left((1-\varepsilon)\rho e_{n},(1-\eps)\rho\right)^{c}\right)
\end{align*}
and 
\begin{align*}
\quad d_{M,\,\delta}^{+}=\text{dist}\left(y_{M,\,\delta},B_{2}^{n}\left((1+\varepsilon)\rho e_{n},(1+\eps)\rho\right)^{c}\right).
\end{align*}
Boundedness of $\phi$ on $B_{2}^{n}\left(0,s\right)$ and (\ref{eq:SameAsCap})
imply that for $\delta\ge0$, 
\begin{eqnarray*}
 &  & \hskip-10mm\mwfloat{\delta}{B_{2}^{n}\left((1-\varepsilon)\rho e_{n},(1-\eps)\rho\right)\cap H_{\Delta_{\eps}}^{+}}{\left(1-\eps\right)\phi\left(0\right)}\sub\mwfloat{\delta}{K\cap H_{\Delta_{\eps}}^{+}}{\phi}\\
 &  & \hskip20mm\sub\mwfloat{\delta}{B_{2}^{n}\left((1+\varepsilon)\rho e_{n},(1+\eps)\rho\right)\cap H_{\Delta_{\eps}}^{+}}{\left(1+\eps\right)\phi\left(0\right)}.
\end{eqnarray*}
By Lemma \ref{lem: UnaffectedCut} and the choice of $\Delta_{\eps}$
we have 
\begin{eqnarray*}
 &  & \hskip-5mm\mwfloat{\delta}{B_{2}^{n}\left((1-\varepsilon)\rho e_{n},(1-\eps)\rho\right)}{\left(1-\eps\right)\phi\left(0\right)}\cap H_{\Delta_{\eps}}^{+}\ \sub\ \mwfloat{\delta}K{\phi}\cap H_{\Delta_{\eps}}^{+}\\
 &  & \hskip10mm\sub\ \mwfloat{\delta}{B_{2}^{n}\left((1+\varepsilon)\rho e_{n},(1-\eps)\rho\right)}{\left(1+\eps\right)\phi\left(0\right)}\cap H_{\Delta_{\eps}}^{+}.
\end{eqnarray*}
Choose $\delta$ so small that $y_{M,\delta}\in H_{\Delta_{\eps}}^{+}$.
Then 
\[
y_{M,\,\delta}\ \notin{\rm int}\ \left(\mwfloat{\delta}{B_{2}^{n}\left((1-\varepsilon)\rho e_{n},(1-\eps)\rho\right)}{\left(1-\eps\right)\phi\left(0\right)}\right)
\]
and 
\[
y_{M,\,\delta}\ \in\ {\rm int}\ \left(\mwfloat{\delta}{B_{2}^{n}\left((1-\varepsilon)\rho e_{n},(1+\eps)\rho\right)}{\left(1+\eps\right)\phi\left(0\right)}\right).
\]
Thus, we conclude that 
\[
d_{M,\,\delta}^{-}\le\Delta\left(\left(1-\eps\right)\rho,\,\left(1-\eps\right)\delta\phi\left(0\right)\right)\ \ {\rm and}\ \ d_{M,\,\delta}^{+}\ge\Delta\left(\left(1+\eps\right)\rho,\,\left(1+\eps\right)\delta\phi\left(0\right)\right),
\]
where $\Delta\left(\left(1+\eps\right)\rho,\,\left(1+\eps\right)\delta\phi\left(0\right)\right)$
and $\Delta\left(\left(1-\eps\right)\rho,\,\left(1-\eps\right)\delta\phi\left(0\right)\right)$
are the differences of the radii of $\left(1+\eps\right)\rho B_{2}^{n}$
and $\text{\ensuremath{\mfloat{\delta}{\rho B_{2}^{n},(1+\varepsilon)\phi(0)}}}$,
and of $\left(1-\eps\right)\rho B_{2}^{n}$ and $\text{\ensuremath{\mfloat{\delta}{\rho B_{2}^{n},(1-\varepsilon)\phi(0)}}}$,
respectively. Applying Lemma \ref{distance}(ii) with $z=y_{M,\,\delta}$
and Proposition \ref{prop:floatingball} for sufficiently small $\delta$,
yields 
\[
\left(1-\eps\right)^{\frac{n+1}{n-1}+\frac{2}{n+1}}\le\frac{\iprod{y_{M,\,\delta}}{e_{n}}}{c_{n}\delta^{\frac{2}{n+1}}\rho^{-\frac{n-1}{n+1}}\phi(0)^{\frac{2}{n+1}}}\le\left(1+\eps\right)^{\frac{n+1}{n-1}+\frac{2}{n+1}}.
\]
Since $\eps>0$ can be chosen arbitrary, we obtain, also using \eqref{limes},
\[
\lim_{\delta\rightarrow0}\phi(x)^{\frac{2}{n+1}}\ \iprod x{N_{K}\left(x\right)}\frac{\norm{x-x_{M,\delta}}}{\norm x\delta^{\frac{2}{n+1}}}=c_{n}\ \rho(x)^{-\frac{n-1}{n+1}}=c_{n}\ \kappa_{K}(x)^{\frac{1}{n+1}}.
\]

\vskip 2mm (ii) Now we assume that $x$ is such that the indicatrix
of Dupin at $x$ is an elliptic cylinder. We will show that then 
\[
\lim_{\delta\rightarrow0}\iprod x{N_{K}\left(x\right)}\frac{\norm{x-x_{M,\delta}}}{\norm x\delta^{\frac{2}{n+1}}}=0.
\]
We only need to show that $\lim_{\delta\rightarrow0}\iprod x{N_{K}\left(x\right)}\frac{\norm{x-x_{M,\delta}}}{\norm x\delta^{\frac{2}{n+1}}}\leq0$.

\noindent We may assume that the first $k$ axes of the elliptic cylinder
have infinite lengths and the others not. Then, as above (see, e.g.,
\cite[Proof of Lemma 23]{Schuett:2004}) for all $\varepsilon>0$
there is an approximating ellipsoid $\mathcal{E}$ and $\Delta_{\varepsilon}$
such that the hyperplane $H\left(N_{K}(x),x-\Delta_{\varepsilon})N_{K}(x)\right)$
orthogonal to $N_{K}(x)$ through the point $x-\Delta_{\varepsilon}N_{K}(x)$
is such that 
\begin{eqnarray*}
H^{+}\left(N_{K}(x),x-\Delta_{\varepsilon})N_{K}(x)\right)\ \cap\ \mathcal{E}\subseteq H^{+}\left(N_{K}(x),x-\Delta_{\varepsilon})N_{K}(x)\right)\ \cap\ K
\end{eqnarray*}
and such that the lengths of the $k$ first principal axes of $\mathcal{E}$
are larger than $\frac{1}{\varepsilon}$. As noted above, there is
a support hyperplane $H_{\delta}$ to ${F}_{\delta}\left(K,\phi\right)$
such that $x_{F,\delta}\in H_{\delta}$ and such that $\delta=\int_{K\cap H_{\delta}^{+}}\phi(y)dy$
\cite{Werner2002}. Then 
\[
\delta\geq\min_{y\in K}\phi(y)|K\cap H_{\delta}^{+}|\geq\min_{y\in K}\phi(y)|\mathcal{E}\cap H_{\delta}^{+}|.
\]
As above, we may assume that the approximating ellipsoid $\mathcal{E}$
is a Euclidean ball with radius $\rho=\rho(x)$ where $\rho\geq\frac{1}{\varepsilon}$.
Then 
\begin{eqnarray*}
\iprod x{N_{K}\left(x\right)}\frac{\norm{x-x_{M,\delta}}}{\norm x\delta^{\frac{2}{n+1}}} & \leq & \iprod x{N_{K}\left(x\right)}\frac{\norm{x-x_{F,\delta}}}{\norm x\delta^{\frac{2}{n+1}}}\\
 & \leq & \frac{\langle\frac{x}{\|x\|},N_{K}(x)\rangle\ \norm{x-x_{F,\delta}}}{\left(\min_{y\in K}\phi(y)\right)^{\frac{2}{n+1}}\left(|B_{2}^{n}(x-\rho N_{K}(x),\rho)\cap H_{\delta}^{+}|\right)^{\frac{2}{n+1}}}\\
 & \leq & \frac{\rho^{-\frac{n-1}{n+1}}}{c_{n}\left(\min_{y\in K}\phi(y)\right)^{\frac{2}{n+1}}}.
\end{eqnarray*}
The last inequality can be shown using similar methods as in the case
(i). Or, one notices that we are precisely in the situation of Lemmas
7 and 10 of \cite{Schuett:1990} where exactly this estimate is proved.
As $\rho$ is arbitrarily small, the proof is completed.

\bibliographystyle{amsplain_NoDash}
\bibliography{refs}

\providecommand{\bysame}{\leavevmode\hbox to3em{\hrulefill}\thinspace}
\providecommand{\MR}{\relax\ifhmode\unskip\space\fi MR }
\providecommand{\MRhref}[2]{%
  \href{http://www.ams.org/mathscinet-getitem?mr=#1}{#2}
}
\providecommand{\href}[2]{#2}
\begin{thebibliography}{10}

\bibitem{Alexandroff}
A.~D. Alexandroff, \emph{Almost everywhere existence of the second differential
  of a convex function and some properties of convex surfaces connected with
  it}, Leningrad State Univ. Annals [Uchenye Zapiski] Math. Ser. \textbf{6}
  (1939), 3--35.

\bibitem{Andrews:1996}
B. Andrews, \emph{Contraction of convex hypersurfaces by their affine normal},
  J. Differential Geom. \textbf{43} (1996), no.~2, 207--230.

\bibitem{Andrews:1999}
B. Andrews, \emph{The affine curve-lengthening flow}, J. Reine Angew. Math.
  \textbf{506} (1999), 43--83.

\bibitem{Artstein-Avidan:2012}
S. Artstein-Avidan, B. Klartag, C. Sch{\"u}tt, and E. Werner, \emph{Functional
  affine-isoperimetry and an inverse logarithmic {S}obolev inequality}, J.
  Funct. Anal. \textbf{262} (2012), no.~9, 4181--4204.

\bibitem{Auerbach1938}
H. Auerbach, \emph{Sur un probl\`eme de {M}. {U}lam concernant l'\'equilibre
  des corps flottants}, Studia Mathematica \textbf{7} (1938), no.~1, 121--142.

\bibitem{Barany_Larman:1988}
I. B\'ar\'any and D.~G. Larman, \emph{Convex bodies, economic cap coverings,
  random polytopes}, Mathematika \textbf{35} (1988), no.~2, 274--291.

\bibitem{Besau2016}
F. Besau, M. Ludwig, and E.~M. Werner, \emph{{Weighted floating bodies and
  polytopal approximation}}, To appear in Transactions of the AMS.

\bibitem{Besau_Werner:2016}
F. Besau and E.~M. Werner, \emph{The spherical convex floating body}, Adv.
  Math. \textbf{301} (2016), 867--901.

\bibitem{Besau_Werner:2018}
F. Besau and E.~M. Werner, \emph{The floating body in real space forms}, J.
  Diff. Geom., in press (2018).

\bibitem{Blaschke:1923}
W. Blaschke, \emph{Vorlesungen {\"{u}}ber differentialgeometrie ii, affine
  differentialgeometrie}, Springer-Verlag, Berlin, 1923.

\bibitem{Boeroeczky:2000a}
K. B{\"o}r{\"o}czky, Jr., \emph{Approximation of general smooth convex bodies},
  Adv. Math. \textbf{153} (2000), no.~2, 325--341.

\bibitem{Boeroeczky:2000}
K. B{\"o}r{\"o}czky, Jr., \emph{Polytopal approximation bounding the number of
  {$k$}-faces}, J. Approx. Theory \textbf{102} (2000), no.~2, 263--285.

\bibitem{Buseman1935}
H. Busemann and W. Feller, \emph{Kr\"ummungseigenschaften {K}onvexer
  {F}l\"achen}, Acta Math. \textbf{66} (1936), no.~1, 1--47.

\bibitem{Caglar:2014}
U. Caglar and E.~M. Werner, \emph{Divergence for {$s$}-concave and log concave
  functions}, Adv. Math. \textbf{257} (2014), 219--247.

\bibitem{CFG91}
H.~T. Croft, K.~J. Falconer, and R.~K. Guy, \emph{Unsolved problems in
  geometry}, Problem Books in Mathematics, Springer-Verlag, New York, 1991,
  Unsolved Problems in Intuitive Mathematics, II.

\bibitem{Dupin:1822}
C. Dupin, \emph{Application de g{\'{e}}om{\'{e}}trie et de m{\'{e}}chanique},
  Paris, 1822.

\bibitem{Fleury:2007}
B. Fleury, O. Gu{\'e}don, and G. Paouris, \emph{A stability result for mean
  width of {$L_p$}-centroid bodies}, Adv. Math. \textbf{214} (2007), no.~2,
  865--877.

\bibitem{Gardner:2006}
R.~J. Gardner, \emph{Geometric tomography}, second ed., Encyclopedia of
  Mathematics and its Applications, vol.~58, Cambridge University Press,
  Cambridge, 2006.

\bibitem{Gilbert91}
E.~N. Gilbert, \emph{How things float}, The American Mathematical Monthly
  \textbf{98} (1991), no.~3, 201--216.

\bibitem{Grote_Werner:2018}
J. Grote and E.~M. Werner, \emph{Approximation of smooth convex bodies by
  random polytopes}, To appear in Electronic Journal of Probability (2018),
  \href{https://arxiv.org/abs/1706.07623}{arXiv:1709.02429}.

\bibitem{Gruber:1988}
P.~M. Gruber, \emph{Volume approximation of convex bodies by inscribed
  polytopes}, Math. Ann. \textbf{281} (1988), no.~2, 229--245.

\bibitem{Gruber:1993}
P.~M. Gruber, \emph{Aspects of approximation of convex bodies}, Handbook of
  convex geometry, {V}ol.\ {A}, {B}, North-Holland, Amsterdam, 1993,
  pp.~319--345.

\bibitem{Gruber:2007}
P.~M. Gruber, \emph{Convex and discrete geometry}, Grundlehren der
  Mathematischen Wissenschaften [Fundamental Principles of Mathematical
  Sciences], vol. 336, Springer, Berlin, 2007.

\bibitem{Huang2017}
H. Huang and B.~A. Slomka, \emph{{Approximations of convex bodies by
  measure-generated sets}}, To appear in Geom. Dedicata (2017),
  \href{http://arxiv.org/abs/1706.07112}{arXiv:1706.07112}.

\bibitem{Hug:1996}
D. Hug, \emph{Contributions to affine surface area}, Manuscripta Math.
  \textbf{91} (1996), no.~3, 283--301.

\bibitem{Ivaki:2014}
M.~N. Ivaki, \emph{On the stability of the {$p$}-affine isoperimetric
  inequality}, J. Geom. Anal. \textbf{24} (2014), no.~4, 1898--1911.

\bibitem{Ivaki:2013a}
M.~N. Ivaki and A. Stancu, \emph{Volume preserving centro-affine normal flows},
  Comm. Anal. Geom. \textbf{21} (2013), no.~3, 671--685.

\bibitem{Leichtweiss:1986}
K. Leichtwei{\ss}, \emph{Zur {A}ffinoberfl\"ache konvexer {K}\"orper},
  Manuscripta Math. \textbf{56} (1986), no.~4, 429--464.

\bibitem{Leichtweiss89}
K. Leichtwei\ss, \emph{Bemerkungen zur {D}efinition einer erweiterten
  {A}ffinoberfl\"ache von {E}. {L}utwak}, Manuscripta Math. \textbf{65} (1989),
  no.~2, 181--197.

\bibitem{Leichtweiss:1998}
K. Leichtwei\ss, \emph{Affine geometry of convex bodies}, Johann Ambrosius
  Barth Verlag, Heidelberg, 1998. \MR{1630116}

\bibitem{Lovasz:2007}
L. Lov\'asz and S. Vempala, \emph{The geometry of logconcave functions and
  sampling algorithms}, Random Structures Algorithms \textbf{30} (2007), no.~3,
  307--358.

\bibitem{Ludwig:1999a}
M. Ludwig, \emph{Asymptotic approximation of smooth convex bodies by general
  polytopes}, Mathematika \textbf{46} (1999), no.~1, 103--125. \MR{1750407}

\bibitem{Ludwig:2010}
M. Ludwig and M. Reitzner, \emph{A classification of {${\rm SL}(n)$} invariant
  valuations}, Ann. of Math. (2) \textbf{172} (2010), no.~2, 1219--1267.

\bibitem{Lutwak:1991}
E. Lutwak, \emph{Extended affine surface area}, Adv. Math. \textbf{85} (1991),
  no.~1, 39--68.

\bibitem{Lutwak:1996}
E. Lutwak, \emph{The {B}runn-{M}inkowski-{F}irey theory. {II}. {A}ffine and
  geominimal surface areas}, Adv. Math. \textbf{118} (1996), no.~2, 244--294.

\bibitem{Lutwak:2002a}
E. Lutwak, D. Yang, and G. Zhang, \emph{The {C}ramer-{R}ao inequality for star
  bodies}, Duke Math. J. \textbf{112} (2002), no.~1, 59--81.

\bibitem{Lutwak:2004b}
E. Lutwak, D. Yang, and G. Zhang, \emph{Moment-entropy inequalities}, Ann.
  Probab. \textbf{32} (2004), no.~1B, 757--774.

\bibitem{Lutwak:2005a}
E. Lutwak, D. Yang, and G. Zhang, \emph{Cram\'er-{R}ao and moment-entropy
  inequalities for {R}enyi entropy and generalized {F}isher information}, IEEE
  Trans. Inform. Theory \textbf{51} (2005), no.~2, 473--478.

\bibitem{Lutwak:1997}
E. Lutwak and G. Zhang, \emph{Blaschke-{S}antal\'o inequalities}, J.
  Differential Geom. \textbf{47} (1997), no.~1, 1--16.

\bibitem{ScotBook81}
R.~D. Mauldin (ed.), \emph{The {S}cottish {B}ook}, Birkh\"auser, Boston, Mass.,
  1981, Mathematics from the Scottish Caf\'e, Including selected papers
  presented at the Scottish Book Conference held at North Texas State
  University, Denton, Tex., May 1979.

\bibitem{Meyer1989}
M. Meyer and S. Reisner, \emph{{Characterizations of ellipsoids by
  section-centroid location}}, Geometriae Dedicata \textbf{31} (1989), no.~3,
  345--355.

\bibitem{Meyer-Reisner1991}
M. Meyer and S. Reisner, \emph{A geometric property of the boundary of
  symmetric convex bodies and convexity of flotation surfaces}, Geom. Dedicata
  \textbf{37} (1991), no.~3, 327--337.

\bibitem{Meyer:2000}
M. Meyer and E. Werner, \emph{On the {$p$}-affine surface area}, Adv. Math.
  \textbf{152} (2000), no.~2, 288--313.

\bibitem{Nomizu:1994}
K. Nomizu and T. Sasaki, \emph{Affine differential geometry}, Cambridge Tracts
  in Mathematics, vol. 111, Cambridge University Press, Cambridge, 1994,
  Geometry of affine immersions. \MR{1311248}

\bibitem{Paouris_Werner:2012}
G. Paouris and E.~M. Werner, \emph{Relative entropy of cone measures and
  {$L_p$} centroid bodies}, Proc. Lond. Math. Soc. (3) \textbf{104} (2012),
  no.~2, 253--286.

\bibitem{Reitzner:2005}
M. Reitzner, \emph{The combinatorial structure of random polytopes}, Adv. Math.
  \textbf{191} (2005), no.~1, 178--208.

\bibitem{Schneider:1993}
R. Schneider, \emph{Convex bodies: the {B}runn-{M}inkowski theory},
  Encyclopedia of Mathematics and its Applications, vol.~44, Cambridge
  University Press, Cambridge, 1993.

\bibitem{Schuett:1991}
C. Sch{\"u}tt, \emph{The convex floating body and polyhedral approximation},
  Israel J. Math. \textbf{73} (1991), no.~1, 65--77.

\bibitem{Schuett:1993}
C. Sch{\"u}tt, \emph{On the affine surface area}, Proc. Amer. Math. Soc.
  \textbf{118} (1993), no.~4, 1213--1218.

\bibitem{Schutt:1994}
C. Sch\"utt, \emph{Random polytopes and affine surface area}, Math. Nachr.
  \textbf{170} (1994), 227--249.

\bibitem{Schuett:1990}
C. Sch{\"u}tt and E. Werner, \emph{The convex floating body}, Math. Scand.
  \textbf{66} (1990), no.~2, 275--290.

\bibitem{Schuett:2003}
C. Sch{\"u}tt and E. Werner, \emph{Polytopes with vertices chosen randomly from
  the boundary of a convex body}, Geometric aspects of functional analysis,
  Lecture Notes in Math., vol. 1807, Springer, Berlin, 2003, pp.~241--422.

\bibitem{Schuett:2004}
C. Sch{\"u}tt and E. Werner, \emph{Surface bodies and {$p$}-affine surface
  area}, Adv. Math. \textbf{187} (2004), no.~1, 98--145.

\bibitem{Stancu:2002}
A. Stancu, \emph{The discrete planar {$L_0$-M}inkowski problem}, Adv. Math.
  \textbf{167} (2002), no.~1, 160--174.

\bibitem{Stancu:2003}
A. Stancu, \emph{On the number of solutions to the discrete two-dimensional
  {$L_0$-M}inkowski problem}, Adv. Math. \textbf{180} (2003), no.~1, 290--323.

\bibitem{Trudinger:2005}
N.~S. Trudinger and X.-J. Wang, \emph{The affine {P}lateau problem}, J. Amer.
  Math. Soc. \textbf{18} (2005), no.~2, 253--289.

\bibitem{Varkonyi13}
P.~L. V\'arkonyi, \emph{Neutrally floating objects of density {$\frac12$} in
  three dimensions}, Stud. Appl. Math. \textbf{130} (2013), no.~3, 295--315.

\bibitem{Werner2002}
E. Werner, \emph{The {$p$}-affine surface area and geometric interpretations},
  Rend. Circ. Mat. Palermo (2) Suppl. (2002), no.~70, part II, 367--382, IV
  International Conference in ``Stochastic Geometry, Convex Bodies, Empirical
  Measures $\&$ Applications to Engineering Science'', Vol. II (Tropea, 2001).

\bibitem{Werner:2007}
E. Werner, \emph{On {$L_p$}-affine surface areas}, Indiana Univ. Math. J.
  \textbf{56} (2007), no.~5, 2305--2323.

\bibitem{Werner:2008}
E. Werner and D. Ye, \emph{New {$L_p$} affine isoperimetric inequalities}, Adv.
  Math. \textbf{218} (2008), no.~3, 762--780.

\bibitem{Werner:2012}
E.~M. Werner, \emph{R\'enyi divergence and {$L_p$}-affine surface area for
  convex bodies}, Adv. Math. \textbf{230} (2012), no.~3, 1040--1059.

\end{thebibliography}

\end{document}